\newtheorem{dfn}{Definition}[section]
\newtheorem{thm}{Theorem}
\newtheorem{prp}[dfn]{Proposition}
\newtheorem{lem}[dfn]{Lemma}
\newtheorem{cor}[dfn]{Corollary}
\theoremstyle{definition}
\newtheorem{rem}[dfn]{Remark}
\newtheorem{exl}[dfn]{Example}
\newtheorem{prb}{Problem}
\def\R{{\mathbb R}}
\def\Z{{\mathbb Z}}
\def\C{{\mathbb C}}
\def\Sph{{\mathbb S}}
\def\CP{{\mathbb C}{\mathrm P}}
\def\RP{{\mathbb R}{\mathrm P}}
\DeclareMathOperator{\sn}{sn}
\DeclareMathOperator{\cn}{cn}
\DeclareMathOperator{\dn}{dn}
\def\phi{\varphi}
\def\epsilon{\varepsilon}
\def\mod{\mathrm{mod}}
\renewcommand{\Re}{\operatorname{Re}}
\renewcommand{\Im}{\operatorname{Im}}
\newcommand{\Qor}{Q_{or}}
\newcommand{\QorC}{\Qor^\C}
\newcommand{\id}{\mathrm{id}}
\title{Deformation of quadrilaterals and addition on elliptic curves}
\author{Ivan Izmestiev}
\date{\today}
\thanks{Supported by the European Research Council under the European Union's Seventh Framework Programme (FP7/2007-2013)/\allowbreak ERC Grant agreement no.~247029-SDModels}
\address{Institut f\"ur Mathematik \\
Freie Universit\"at Berlin \\
Arnimallee 2 \\
D-14195 Berlin \\
 GERMANY}
\email{izmestiev@math.fu-berlin.de}
\begin{document}
\keywords{Folding of quadrilaterals; porism; elliptic curve; biquadratic equation}
\begin{abstract}
The space of quadrilaterals with fixed side lengths is an elliptic curve. Darboux used this to prove a porism on foldings.

In this article, the space of oriented quadrilaterals is studied on the base of biquadratic equations between their angles. The space of non-oriented quadrilaterals is also an elliptic curve, doubly covered by the previous one, and is described by a biquadratic relation between diagonals. The spaces of non-oriented quadrilaterals with the side lengths $(a_1, a_2, a_3, a_4)$ and $(s-a_1, s-a_2, s-a_3, s-a_4)$ turn out to be isomorphic via identification of two quadrilaterals with the same diagonal lengths.

We prove a periodicity condition for foldings, similar to Cayley's condition for the Poncelet porism.

Some applications to kinematics and geometry are presented.
\end{abstract}

\maketitle

\section{Introduction}

\subsection{The Darboux porism}
Let $ABCD$ be a planar quadrilateral. Denote by $D'$ the image of $D$ under reflection in the line $AC$. We call the transformation
\[
F_D \colon ABCD \mapsto ABCD'
\]
the \emph{$D$-folding}. It acts on the set of all quadrilaterals $ABCD$ with $A \ne C$. Define the $C$-folding in a similar way, see Figure \ref{fig:CDFold}.

\begin{figure}[htb]
\begin{center}
\begin{picture}(0,0)%
\includegraphics{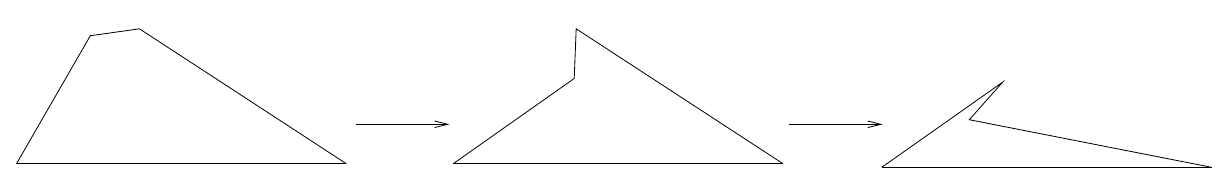}%
\end{picture}%
\setlength{\unitlength}{2072sp}%
\begingroup\makeatletter\ifx\SetFigFont\undefined%
\gdef\SetFigFont#1#2#3#4#5{%
  \reset@font\fontsize{#1}{#2pt}%
  \fontfamily{#3}\fontseries{#4}\fontshape{#5}%
  \selectfont}%
\fi\endgroup%
\begin{picture}(11088,1753)(-1544,199)
\put(7201,659){\makebox(0,0)[lb]{\smash{{\SetFigFont{7}{8.4}{\rmdefault}{\mddefault}{\updefault}{\color[rgb]{0,0,0}$C'$}%
}}}}
\put(7651,1289){\makebox(0,0)[lb]{\smash{{\SetFigFont{7}{8.4}{\rmdefault}{\mddefault}{\updefault}{\color[rgb]{0,0,0}$D'$}%
}}}}
\put(9451,254){\makebox(0,0)[lb]{\smash{{\SetFigFont{7}{8.4}{\rmdefault}{\mddefault}{\updefault}{\color[rgb]{0,0,0}$B$}%
}}}}
\put(6391,254){\makebox(0,0)[lb]{\smash{{\SetFigFont{7}{8.4}{\rmdefault}{\mddefault}{\updefault}{\color[rgb]{0,0,0}$A$}%
}}}}
\put(-269,1784){\makebox(0,0)[lb]{\smash{{\SetFigFont{7}{8.4}{\rmdefault}{\mddefault}{\updefault}{\color[rgb]{0,0,0}$C$}%
}}}}
\put(-989,1694){\makebox(0,0)[lb]{\smash{{\SetFigFont{7}{8.4}{\rmdefault}{\mddefault}{\updefault}{\color[rgb]{0,0,0}$D$}%
}}}}
\put(2476,254){\makebox(0,0)[lb]{\smash{{\SetFigFont{7}{8.4}{\rmdefault}{\mddefault}{\updefault}{\color[rgb]{0,0,0}$A$}%
}}}}
\put(3736,1829){\makebox(0,0)[lb]{\smash{{\SetFigFont{7}{8.4}{\rmdefault}{\mddefault}{\updefault}{\color[rgb]{0,0,0}$C$}%
}}}}
\put(3691,1064){\makebox(0,0)[lb]{\smash{{\SetFigFont{7}{8.4}{\rmdefault}{\mddefault}{\updefault}{\color[rgb]{0,0,0}$D'$}%
}}}}
\put(5536,299){\makebox(0,0)[lb]{\smash{{\SetFigFont{7}{8.4}{\rmdefault}{\mddefault}{\updefault}{\color[rgb]{0,0,0}$B$}%
}}}}
\put(1531,254){\makebox(0,0)[lb]{\smash{{\SetFigFont{7}{8.4}{\rmdefault}{\mddefault}{\updefault}{\color[rgb]{0,0,0}$B$}%
}}}}
\put(-1529,254){\makebox(0,0)[lb]{\smash{{\SetFigFont{7}{8.4}{\rmdefault}{\mddefault}{\updefault}{\color[rgb]{0,0,0}$A$}%
}}}}
\put(1936,929){\makebox(0,0)[lb]{\smash{{\SetFigFont{7}{8.4}{\rmdefault}{\mddefault}{\updefault}{\color[rgb]{0,0,0}$F_D$}%
}}}}
\put(5896,929){\makebox(0,0)[lb]{\smash{{\SetFigFont{7}{8.4}{\rmdefault}{\mddefault}{\updefault}{\color[rgb]{0,0,0}$F_C$}%
}}}}
\end{picture}%
\end{center}
\caption{Composition of $C$- and $D$-foldings.}
\label{fig:CDFold}
\end{figure}

Alternate the $C$- and the $D$-foldings, so that the vertices $A$ and $B$ stay in their places, while $C$ and $D$ jump. If we are lucky, then after several iterations the points $C$ and $D$ return to their initial positions.

\begin{dfn}
A quadrilateral $ABCD$ is called \emph{$n$-periodic}, if it is invariant under the $n$-fold iteration of the composition of $C$- and $D$-foldings:
\[
(F_C \circ F_D)^n (ABCD) = ABCD
\]
\end{dfn}
The quadrilateral from Figure \ref{fig:CDFold} is $3$-periodic, see Figure \ref{fig:Darboux6}. Surprizingly enough, the periodicity depends only on the side lengths.

\begin{figure}[htb]
\begin{center}
\begin{picture}(0,0)%
\includegraphics{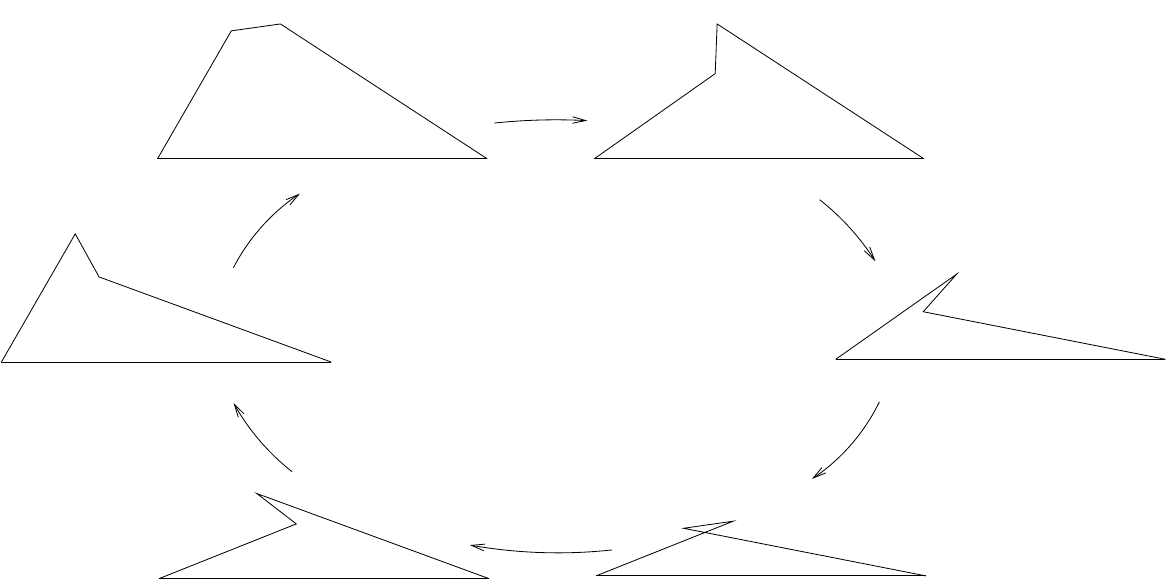}%
\end{picture}%
\setlength{\unitlength}{2072sp}%
\begingroup\makeatletter\ifx\SetFigFont\undefined%
\gdef\SetFigFont#1#2#3#4#5{%
  \reset@font\fontsize{#1}{#2pt}%
  \fontfamily{#3}\fontseries{#4}\fontshape{#5}%
  \selectfont}%
\fi\endgroup%
\begin{picture}(10667,5278)(-2833,-3371)
\put(-269,1784){\makebox(0,0)[lb]{\smash{{\SetFigFont{7}{8.4}{\rmdefault}{\mddefault}{\updefault}{\color[rgb]{0,0,0}$C$}%
}}}}
\put(-989,1694){\makebox(0,0)[lb]{\smash{{\SetFigFont{7}{8.4}{\rmdefault}{\mddefault}{\updefault}{\color[rgb]{0,0,0}$D$}%
}}}}
\put(1531,254){\makebox(0,0)[lb]{\smash{{\SetFigFont{7}{8.4}{\rmdefault}{\mddefault}{\updefault}{\color[rgb]{0,0,0}$B$}%
}}}}
\put(-1529,254){\makebox(0,0)[lb]{\smash{{\SetFigFont{7}{8.4}{\rmdefault}{\mddefault}{\updefault}{\color[rgb]{0,0,0}$A$}%
}}}}
\end{picture}%
\end{center}
\caption{A $3$-periodic quadrilateral. Side lengths are $1, 3, 3\sqrt{5}, 5$.}
\label{fig:Darboux6}
\end{figure}


\begin{thm}[Darboux \cite{Dar79}]
\label{thm:Darboux}
If a quadrilateral is $n$-periodic, then every quadrilateral with the same side lengths is $n$-periodic.
\end{thm}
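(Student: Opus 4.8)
The plan is to follow Darboux's original idea: to realize the quadrilaterals with the prescribed side lengths as the (real) points of an elliptic curve $E$ on which the $C$- and $D$-foldings act as two natural involutions, so that their composition acts on $E$ as a \emph{translation}. Then ``$ABCD$ is $n$-periodic'' becomes the assertion that the $n$-th multiple of the translation vector vanishes, and this is a condition on $E$ and that vector only --- hence on the side lengths alone, which is exactly the claim.

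\emph{Constructing the curve.} Write $a_1=|AB|$, $a_2=|BC|$, $a_3=|CD|$, $a_4=|DA|$, and use the squared diagonals $x=|AC|^2$ and $y=|BD|^2$ as coordinates on the space of quadrilaterals. For a fixed value of $x$ the triangles $ABC$ and $ACD$ are determined up to congruence by their side lengths; the only remaining freedom is the side of the line $AC$ on which $D$ lies, which flips the sign of the angle $\angle DAC$. Expressing $\cos\angle BAC$ and $\cos\angle DAC$ in terms of $x$ via the law of cosines, expressing $y$ through $\cos(\angle BAC\pm\angle DAC)$, and squaring to remove the sign, one obtains a polynomial relation between $x$ and $y$ which, after clearing denominators and discarding the spurious factor $x$ (the degenerate case $A=C$), is \emph{biquadratic}: of degree two in each variable, with coefficients depending only on $a_1,\dots,a_4$. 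For the side lengths in question this curve $E$, completed in $\mathbb{P}^1\times\mathbb{P}^1$ and normalized, is a smooth curve of genus one, i.e.\ an elliptic curve, and the space of non-oriented quadrilaterals with these side lengths is identified with a locus in $E(\R)$.

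\emph{The foldings and the conclusion.} The $D$-folding reflects $D$ in the line $AC$: it fixes $|AC|$, hence $x$, and interchanges the two positions of $D$, hence the two roots $y$ of the biquadratic. Thus $F_D$ is the deck transformation $\sigma_x$ of the double cover $E\to\mathbb{P}^1$, $(x,y)\mapsto x$, and symmetrically $F_C=\sigma_y$. Each of these covers is ramified over four points (Riemann--Hurwitz), so each of $\sigma_x,\sigma_y$ is an involution of $E$ with a fixed point; such an involution has the form $P\mapsto c-P$ in the group law (for any choice of origin), so that $\sigma_x(P)=c_x-P$ and $\sigma_y(P)=c_y-P$ for points $c_x,c_y\in E$ depending only on the side lengths. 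Hence
\[
F_C\circ F_D=\sigma_y\circ\sigma_x\colon\ P\longmapsto c_y-(c_x-P)=P+t
\]
is the translation by the fixed element $t:=c_y-c_x\in E$, and $(F_C\circ F_D)^n$ is the translation by $nt$. Now if some quadrilateral with the given side lengths is $n$-periodic, then the translation by $nt$ has a fixed point on $E$; a nonzero translation of an elliptic curve is fixed-point-free, so $nt=O$ and $(F_C\circ F_D)^n=\id$ on all of $E$. Therefore every quadrilateral with these side lengths is $n$-periodic.

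The main obstacle is the first step: one has to check that for the relevant real side lengths the biquadratic equation really defines a smooth, irreducible, non-rational curve (hence elliptic), and one has to set up the dictionary between honest planar quadrilaterals and points of $E$ carefully enough that $n$-periodicity transfers faithfully in both directions. This is where the bookkeeping sits --- the degenerate configurations ($A=C$ or $B=D$, collinear triples of vertices), the distinction between oriented and non-oriented quadrilaterals and the double cover relating them, and the passage between $E(\R)$ and $E(\C)$. A smaller but genuine point is pinning down $F_C$ and $F_D$ as exactly $\sigma_y$ and $\sigma_x$ --- not post-composed with a translation by a $2$-torsion point --- since it is the presence of fixed points that forces the ``$P\mapsto c-P$'' normal form used above.
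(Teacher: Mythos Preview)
Your argument follows the same strategy as the paper's: realize the configuration space as an elliptic curve on which the two foldings act as involutions of the form $P\mapsto c-P$, so that their composition is a translation, and a translation with a fixed point is the identity. The difference is the choice of model. The paper works on the \emph{oriented} space $\QorC(a)$, using the tangents of half-angles as coordinates (equations \eqref{eqn:OppX}, \eqref{eqn:AdjX}) and the explicit Jacobi-function parametrization of Proposition~\ref{prp:SnEuc} to see directly that each $F_i$ acts as $t\mapsto t_i-t$. You instead use the squared diagonals, which are coordinates on the \emph{non-oriented} space $Q^{\C}(a)$ --- this is precisely the curve of Section~\ref{sec:DiagLengths}, equation~\eqref{eqn:DiagCoord}. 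Your abstract identification of $F_C,F_D$ with the two hyperelliptic involutions, and the observation that an involution of an elliptic curve with a fixed point must be $P\mapsto c-P$, sidesteps the explicit uniformization, which is a real economy.

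The oriented/non-oriented issue you flag as ``bookkeeping'' is genuine but small: the oriented and non-oriented folding periods can differ by a factor of~$2$ (see Proposition~\ref{prp:Period}), so identity of $(\hat F_C\circ\hat F_D)^n$ on $Q^{\C}(a)$ does not \emph{automatically} yield identity of $(F_C\circ F_D)^n$ on $\QorC(a)$. The fix is short: the foldings commute with the orientation-reversal, so $(F_C\circ F_D)^n$ is a lift of the identity along the double cover $\QorC(a)\to Q^{\C}(a)$, and a deck transformation with a fixed point is the identity. The paper avoids this step by living on $\QorC(a)$ throughout; in return, its explicit parametrization produces the translation vector in closed form, which is what drives the later periodicity criteria (Theorem~\ref{thm:Period}).
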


The clue to the Darboux porism is the fact that the space of congruence classes of quadrilaterals with fixed side lengths is an elliptic curve, and the foldings $F_C$ and $F_D$ act on it as involutions. Hence $F_C \circ F_D$ is a translation, just as well as $(F_C \circ F_D)^n$. If a translation has a fixed point (one quadrilateral is $n$-periodic), then it is an identity (all quadrilaterals with the same side lengths are periodic).

Unlike the Poncelet porism \cite{DR14}, its relative, the Darboux porism is much less known and was rediscovered several times in the recent decades.

In the present article we take a closer look at the configuration space $\QorC(a)$ of quadrilaterals with side lengths $a \in \R^4$.
We study also the space $Q^{\C}(a)$ of non-oriented quadrilaterals, which is an elliptic curve doubly covered by $\QorC(a)$. There are some surprising algebraic identities that result in a natural isomorphism between the spaces $Q^{\C}(a)$ and $Q^{\C}(\bar a)$, where $a \mapsto \bar a$ is a certain involution on $\R^4$. Geometrically this leads to ``conjugate pairs'' of quadrilaterals. Finally, we express $n$-periodicity of a quadrilateral in terms of its side lengths.

%
%
%
%
%
%
%

\subsection{Euler-Chasles correspondence and Jacobi elliptic functions}
Introduce the variables
\[
z_1 = \tan\frac{\phi_1}2, \quad z_2 = \tan\frac{\phi_2}2
\]
where $\phi_1$ and $\phi_2$ are adjacent angles of a quadrilateral. Then the space of congruence classes of quadrilaterals, respecting the orientation, with fixed side lengths becomes identified with the algebraic curve
\begin{equation}
\label{eqn:Curve}
c_{22} z_1^2 z_2^2 + c_{20} z_1^2 + c_{02} z_2^2 + 2 c_{11} z_1 z_2 + c_{00} = 0
\end{equation}
Here coefficients $c_{ij}$ depend on the side lengths. This is a special biquadratic equation in two variables. Biquadratic equations are also known under the name of Euler-Chasles correspondences \cite{BV96}. In the past decades they attracted a lot of attention: as a basis for $QRT$-maps in the theory of discrete integrable systems \cite{Dui10}; as a solution of the Yang-Baxter equation \cite{Bax89, Kri81}; as an approach to flexible polyhedra \cite{Sta10, Gai13, Izm_Koko}. 

It is known that in a generic case the curve \eqref{eqn:Curve} can be parametrized as
\[
z_1 = p_1 \phi(t), \quad z_2 = p_2 \phi(t+\tau)
\]
where $\phi$ is a second-order elliptic function with simple poles and zeros. Since any pair of adjacent angles is related by a biquadratic equation, all angles turn out to be scaled shifts of the same elliptic function.

If $a = (a_1, a_2, a_3, a_4)$ are the side lengths of a quadrilateral, then the non-degeneracy condition ensuring that the configuration space $\Qor(a)$ is an elliptic curve is
\[
a_1 \pm a_2 \pm a_3 \pm a_4 \ne 0
\]
for all choices of signs. Put differently, this means $a_{\min} + a_{\max} \ne s$, where $a_{\min}$ and $a_{\max}$ are the lengths of the shortest and the longest side, and $s = \frac{a_1+a_2+a_3+a_4}2$ is the half-perimeter.

In kinematics, the inequality $a_{\min} + a_{\max} < s$ is knows as the Grashof condition. It means that the shortest side can make a full turn with respect to each of its neighbors; also it means that the real part of the configuration space $\Qor(a)$ has two components, that is the quadrilateral cannot be deformed into its mirror image. The following theorem describes a parametrization of $\QorC(a)$ and underlines the difference between Grashof and non-Grashof quadrilaterals.

\begin{thm}
\label{thm:Param}
If $a_1 \pm a_2 \pm a_3 \pm a_4 \ne 0$, then the complexified configuration space $\QorC(a)$ of quadrilaterals with side lengths $a$ is an elliptic curve $\C/\Lambda$.
\begin{enumerate}
\item
If $a_{\min} + a_{\max} < s$, then the lattice $\Lambda$ is rectangular, and the cotangents of the halves of the exterior angles can be parametrized as
\begin{align*}
\cot\frac{\phi_1}2 &= q_1 \dn(t+t_1; k), &\cot\frac{\phi_2}2 &= q_2 \dn(t+t_2; k),\\
\cot\frac{\phi_3}2 &= q_3 \dn(t+t_3; k), &\cot\frac{\phi_4}2 &= q_4 \dn(t+t_4; k)
\end{align*}
Here $k = \sqrt{\frac{a_1a_2a_3a_4}{\bar a_1 \bar a_2 \bar a_3 \bar a_4}}$, the amplitudes $q_i$ are real or purely imaginary, and the shifts $t_i$ satisfy
\[
\Im t_i \in \{0, 2K'\}, \quad t_1 - t_3, t_2 - t_4 \in \{\pm K\}
\]
The real part of the configuration space consists of two components corresponding to $\Im t \in \{0, 2K'\}$.
\item
If $a_{\min} + a_{\max} > s$, then the lattice $\Lambda$ is rhombic, and the cotangents of the halves of the exterior angles can be parametrized as
\begin{align*}
\cot\frac{\phi_1}2 &= q_1 \cn(t+t_1; k), &\cot\frac{\phi_2}2 &= q_2 \cn(t+t_2; k),\\
\cot\frac{\phi_3}2 &= q_3 \cn(t+t_3; k), &\cot\frac{\phi_4}2 &= q_4 \cn(t+t_4; k)
\end{align*}
Here $k = \sqrt{\frac{\bar a_1 \bar a_2 \bar a_3 \bar a_4}{a_1a_2a_3a_4}}$, each of the amplitudes $q_i$ is real or purely imaginary, and the shifts $t_i$ satisfy
\[
\Im t_i \in \{0, 2K'\}, \quad t_1 - t_3, t_2 - t_4 \in \{\pm K\}
\]
The real part of the configuration space has one component corresponding to $t \in \R$.
\end{enumerate}
\end{thm}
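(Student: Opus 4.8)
The plan is to start from the biquadratic relation \eqref{eqn:Curve} between $z_1 = \tan\frac{\phi_1}2$ and $z_2 = \tan\frac{\phi_2}2$ (equivalently, write it in terms of $w_i = \cot\frac{\phi_i}2 = 1/z_i$, which converts the top coefficient into the constant term) and to identify the projection $\QorC(a) \to \C$, $(\phi_1,\phi_2,\phi_3,\phi_4) \mapsto w_1$, as a degree-two map. One first computes explicitly the coefficients $c_{ij}$ of the biquadratic relation between adjacent exterior angles in terms of the side lengths; this is a routine application of the law of cosines to the two triangles cut off by a diagonal, eliminating the diagonal length. From the biquadratic relation, $w_2$ is a two-valued algebraic function of $w_1$ whose branch points are the roots of the discriminant, a quartic $\delta(w_1)$ in $w_1$. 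The curve $\QorC(a)$ is thus the hyperelliptic (here elliptic) curve $y^2 = \delta(w_1)$, and the genus-one / smoothness claim follows once one checks $\delta$ has four distinct roots; this is exactly where the hypothesis $a_1 \pm a_2 \pm a_3 \pm a_4 \ne 0$ enters — the discriminant of $\delta$ should factor (up to a constant) as $\prod_{\pm} (a_1 \pm a_2 \pm a_3 \pm a_4)$, so non-degeneracy is equivalent to four simple branch points, hence to $\QorC(a) \cong \C/\Lambda$.

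Next I would pin down the modulus and the period lattice, and this is where the two cases split. Having $y^2 = \delta(w_1)$ with $\delta$ quartic and real, the nature of the lattice (rectangular vs.\ rhombic) is governed by the configuration of the four roots of $\delta$ in $\C$: all four real gives a rectangular lattice, two real and a complex-conjugate pair gives a rhombic one. So the key computation is to locate the roots of $\delta(w_1)$ and show that the case distinction is precisely $a_{\min} + a_{\max} \lessgtr s$. Concretely, I expect $\delta$ to have roots at $\pm q_1$ and $\pm q_1 k^{-1}$ (or a similar pattern) with $k$ as in the statement; then one reads off $k = \sqrt{a_1a_2a_3a_4 / (\bar a_1\bar a_2\bar a_3\bar a_4)}$ in the Grashof case and its reciprocal otherwise, after checking which of the two products is larger. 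Standard reduction of the elliptic integral $\int dw_1/\sqrt{\delta(w_1)}$ to Legendre normal form then expresses $w_1$ as $q_1\,\dn$ or $q_1\,\cn$ of the uniformizing parameter $t$ (up to a shift $t_1$), using the classical fact that $\dn$ parametrizes $y^2 = (1-x^2)(1-x^2+k'^2 x^2)$-type quartics with four real roots and $\cn$ the quartics with a conjugate pair.

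It then remains to propagate the parametrization from $w_1$ to $w_2, w_3, w_4$ and to determine the shifts $t_i$. Since each pair of adjacent exterior angles satisfies a biquadratic relation of the same shape, once $w_1 = q_1\,\dn(t+t_1)$ is established, the relation between $w_1$ and $w_2$ forces $w_2 = q_2\,\dn(t+t_2)$ for some $q_2, t_2$: substitute and use the addition theorem for $\dn$ (resp.\ $\cn$) to see that the biquadratic in $(\dn(t+t_1),\dn(t+t_2))$ is an identity, which both identifies $q_2$ as real or imaginary and restricts $t_1-t_2$. Iterating around the quadrilateral gives all four functions; the closure condition — that going around four sides returns to the start — yields the constraints $t_1 - t_3, t_2 - t_4 \in \{\pm K\}$ (opposite angles are related by a half-period shift) and $\Im t_i \in \{0, 2K'\}$ (a reality constraint, since the $w_i$ must be real on the real locus while $\dn$/$\cn$ are real only for $t$ in $\R$ or $\R + 2K'i$). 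Finally, counting real components: $\dn(t+t_i;k)$ never vanishes and $\dn(\R) \cup \dn(\R+2K'i)$ are two disjoint real intervals, giving two real components in the Grashof case; whereas $\cn$ takes all real values on $\R$ already, giving a single component — matching the Grashof/non-Grashof dichotomy. The main obstacle I anticipate is the bookkeeping in the case split: showing rigorously that the real root configuration of $\delta$ flips exactly at $a_{\min}+a_{\max}=s$, and correctly tracking the reality type (real vs.\ purely imaginary) of each amplitude $q_i$ together with the imaginary parts of the shifts, so that the parametrization is genuinely real-valued on the real configuration space.
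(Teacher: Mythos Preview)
Your plan is sound and would work, but it takes a genuinely different route from the paper's own proof.

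The paper does not go through the discriminant of the adjacent-angle biquadratic at all. Instead it exploits the fact that the relation between \emph{opposite} angles has no cross term: equation \eqref{eqn:OppX} contains only $z_1^2, z_3^2, z_1^2z_3^2$ and a constant. A linear scaling $z_1 = p_1 u$, $z_3 = p_3 v$ brings it to the normal form $u^2 + v^2 = 1 + m u^2 v^2$ with $m = 1 - a_1a_2a_3a_4/(\bar a_1\bar a_2\bar a_3\bar a_4)$ (Proposition~\ref{prp:QEll}); this curve is then parametrized directly by $(\sn t, \sn(t+K))$ or $(\cn t, \cn(t+K))$ according to the sign of $m$ (Lemma~\ref{lem:EllParam}), which immediately gives the modulus and the $\pm K$ shift between opposite angles. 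The adjacent-angle relation is handled separately by reducing it to the same form with a general shift $\tau$ and reading off $A = k^2\sn^2\tau$, $B=\cn\tau\,\dn\tau$, $C=\sn^2\tau$ from the addition law (Lemma~\ref{lem:SnShift}, Proposition~\ref{prp:SnEuc}). Only afterwards does the paper apply Jacobi's imaginary transformation $t\mapsto iu+K$ (resp.\ $t\mapsto iu$) to convert the $\sn$/$\cn$ parametrizations into the $\dn$/$\cn$ forms stated in the theorem and to locate the real locus (Proposition~\ref{prp:RealEll}).

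What each approach buys: your discriminant route is the textbook path and treats all four angles uniformly, but the bookkeeping you flag as the main obstacle---the root configuration of $\delta$ and the reality type of each $q_i$---is genuinely messy for the adjacent-angle quartic. The paper sidesteps this by starting from the opposite-angle relation, whose special symmetry makes the modulus and the half-period shift drop out in one line; the price is an extra step (the imaginary transformation) to reach the final $\dn$/$\cn$ form. If you want to carry out your version, I would suggest computing the discriminant of the \emph{opposite}-angle relation rather than the adjacent one: since $z_3^2$ is a M\"obius function of $z_1^2$ there, the four branch points are $z_1^2 = -b_{00}/b_{20}$ and $z_1^2 = -b_{02}/b_{22}$, and the factorizations of the $b_{ij}$ in Proposition~\ref{prp:OppAngles} make the non-degeneracy condition and the Grashof dichotomy transparent.
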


The proof of Theorem \ref{thm:Param} is contained in Sections \ref{sec:ParamThm} and \ref{sec:EllReal}. There one can also find exact values of the amplitudes and shifts.
The numbers $\bar a_i$ appearing in the formula for the Jacobi modulus are defined as follows.
\begin{equation}
\label{eqn:BarA}
\bar a_1 = \frac{-a_1 + a_2 + a_3 + a_4}2 = s-a_1, \quad \bar a_i = s - a_i, \ i = 2, 3, 4
\end{equation}

The pair of opposite angles $\phi_1$, $\phi_3$ in a quadrilateral is also subject to a relation of the form \eqref{eqn:Curve}, but with a vanishing coefficient at $z_1z_3$. A scaling of variables brings the equation into the form
\begin{equation}
\label{eqn:Opp}
u^2 + v^2 = 1 + mu^2v^2
\end{equation}
with a real $m<1$.
This curve can be parametrized as
\[
u = f(t;k), \quad v = f(t+K;k)
\]
where $f = \dn$ if $m>0$ and $f = \cn$ if $m<0$. Equation \eqref{eqn:Opp} is closely related to the equation
\[
u^2 + v^2 = a^2 + a^2u^2v^2
\]
suggested by Edwards \cite{Edw07} as a new normal form for elliptic curves. Edwards constructs a parametrization of $u$ and $v$ ``from the scratch'' using a version of theta-functions.

\subsection{The space of non-oriented quadrilaterals}
\label{sec:Conj}
Let $Q^{\C}(a)$ be the space of congruence classes, disregarding the orientation, of quadrilaterals with side lengths $a$. Clearly, the map $\QorC(a) \to Q^{\C}(a)$ is a double cover, easy to describe in terms of the holomorphic parameter of Theorem \ref{thm:Param}. There is an unexpected natural isomorphism between the spaces $Q^{\C}(a)$ and $Q^{\C}(\bar a)$ with $\bar a$ as in \eqref{eqn:BarA}: for every quadrilateral with the side lengths $a$ there is a quadrilateral with the same diagonal lengths and the side lengths $\bar a$. 

\begin{thm}
\label{thm:NonOr}
If $a_1 \pm a_2 \pm a_3 \pm a_4 \ne 0$, then the complexified configuration space $Q^{\C}(a)$ of non-oriented quadrilaterals with the side lengths $a$ is an elliptic curve $\C/\Lambda$ with a rectangular lattice $\Lambda$.

There is a natural isomorphism
\[
Q^{\C}(a) \to Q^{\C}(\bar a)
\]
that identifies two quadrilaterals with the same diagonal lengths.

If $\QorC(a)$ has a rectangular lattice, then $\QorC(\bar a)$ has a rhombic lattice. In particular, the above isomorphism does not lift to the spaces of oriented quadrilaterals. The double covers
\[
\QorC(a) \to Q^{\C}(a) \cong Q^{\C}(\bar a) \leftarrow \QorC(\bar a)
\]
look as shown on Figure \ref{fig:Covers}.
\end{thm}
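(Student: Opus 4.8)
The plan is to build everything on top of Theorem~\ref{thm:Param}. First I would make precise the double cover $\QorC(a)\to Q^\C(a)$: forgetting orientation amounts to quotienting by the involution that sends a quadrilateral to its mirror image, which in the holomorphic parameter $t$ of Theorem~\ref{thm:Param} is $t\mapsto -t$ (composed, if necessary, with a half-period shift, determined by requiring the four exterior-angle parametrizations $\cot\tfrac{\phi_i}{2}=q_i\,\dn(t+t_i;k)$ resp.\ $q_i\,\cn(t+t_i;k)$ to be respected; since $\dn$ and $\cn$ are even, the mirror involution is $t\mapsto -t$ up to simultaneous sign of all exterior angles, i.e.\ up to a period). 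Because an involution of an elliptic curve with fixed points has quotient again an elliptic curve (four branch points, Riemann--Hurwitz), $Q^\C(a)=\QorC(a)/(t\mapsto -t)$ is an elliptic curve $\C/\Lambda$. To see that $\Lambda$ is \emph{rectangular} regardless of which case $a$ falls into, I would use the diagonal description promised in the abstract: the pair of diagonal lengths $(d_1,d_2)$ satisfies a biquadratic relation with vanishing mixed coefficient, of the shape \eqref{eqn:Opp}, hence $Q^\C(a)$ carries a parametrization $d_1=f(t;k)$, $d_2=f(t+K;k)$ with $f\in\{\dn,\cn\}$ and real modulus; either way the period lattice of such a curve is rectangular (this is exactly the symmetry of the Jacobi functions with real modulus). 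So the first paragraph of the theorem reduces to: (a) identify the mirror involution on $\QorC(a)$, (b) compute the quotient, (c) read off rectangularity from the diagonal biquadratic.

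Next comes the isomorphism $Q^\C(a)\to Q^\C(\bar a)$. The clean way is to show both sides are canonically identified with the \emph{same} curve, namely the curve of possible diagonal pairs $(d_1,d_2)$. A non-oriented quadrilateral with sides $a$ is determined up to congruence by its two diagonals together with a discrete choice; conversely, cutting the quadrilateral $ABCD$ along the diagonal $AC$ gives two triangles with sides $(a_1,a_2,d_1)$ and $(a_3,a_4,d_1)$, and cutting along $BD$ gives triangles with sides $(a_2,a_3,d_2)$ and $(a_4,a_1,d_2)$. The key elementary observation is that the set of achievable $(d_1,d_2)$ — cut out by the two triangle inequalities and a compatibility (biquadratic) relation linking $d_1$ and $d_2$ — is \emph{invariant} under $a\mapsto\bar a=(s-a_1,s-a_2,s-a_3,s-a_4)$. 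Indeed, I would check that the biquadratic relation \eqref{eqn:Opp} between $d_1^2$ and $d_2^2$ has coefficients that are symmetric functions of the $a_i$ invariant under $a_i\mapsto s-a_i$; the modulus $k=\sqrt{a_1a_2a_3a_4/(\bar a_1\bar a_2\bar a_3\bar a_4)}$ from Theorem~\ref{thm:Param}(1) is replaced by its reciprocal under $a\leftrightarrow\bar a$, but for the \emph{diagonal} curve \eqref{eqn:Opp} the relevant real modulus is the same (one passes $\dn\leftrightarrow$ a rescaled $\dn$ via the reciprocal-modulus transformation of Jacobi functions, which preserves the curve). Granting this, a quadrilateral with sides $a$ and one with sides $\bar a$ sharing the same $(d_1,d_2)$ correspond to the same point of the common curve, and this correspondence is by construction a biregular isomorphism $Q^\C(a)\xrightarrow{\sim}Q^\C(\bar a)$.

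For the last assertion — if $\QorC(a)$ has a rectangular lattice then $\QorC(\bar a)$ has a rhombic one — I would argue by the dichotomy of Theorem~\ref{thm:Param}: rectangular means $a_{\min}+a_{\max}<s$, i.e.\ the Grashof condition. Under $a_i\mapsto s-a_i$ the quantities $a_{\min}+a_{\max}$ and $s$ swap roles in the sense that $\bar a_{\min}+\bar a_{\max}=2s-(a_{\max}+a_{\min})$, so $a_{\min}+a_{\max}<s \iff \bar a_{\min}+\bar a_{\max}>s$; hence $\bar a$ falls into case~(2) of Theorem~\ref{thm:Param}, which is precisely the rhombic case. (One should note the genuinely degenerate locus $a_{\min}+a_{\max}=s$, excluded by hypothesis, is preserved by the involution, so the dichotomy transfers cleanly.) Since a rectangular and a rhombic lattice are never homothetic, the two elliptic curves $\QorC(a)$ and $\QorC(\bar a)$ are not isomorphic, so the isomorphism downstairs cannot lift; Figure~\ref{fig:Covers} then just records this configuration. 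The main obstacle I anticipate is the second paragraph: making the ``same diagonals'' correspondence into a well-defined \emph{morphism of elliptic curves} rather than merely a bijection of real points requires carefully setting up the complexified diagonal curve and checking that the branch structure of both double covers $\QorC(a)\to Q^\C(a)$ and $\QorC(\bar a)\to Q^\C(\bar a)$ is compatible with the identification — equivalently, that the reciprocal-modulus identity for Jacobi functions is exactly what glues the two $\dn$/$\cn$ pictures along their common diagonal curve. Everything else is either Theorem~\ref{thm:Param} or bookkeeping with symmetric functions of the $a_i$.
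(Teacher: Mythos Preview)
There is a genuine error in your treatment of the first part. The mirror involution on $\QorC(a)$ is \emph{not} $t\mapsto -t$; it is a translation. In the $\sn$/$\cn$ parametrization $z_i=p_i\,\sn(t+t_i)$ (resp.\ $\cn$), the relation $(z_1,\dots,z_4)\sim(-z_1,\dots,-z_4)$ becomes $t\sim t+2K$, because $\sn(t+2K)=-\sn(t)$ and $\cn(t+2K)=-\cn(t)$. This is a fixed-point-free involution, and quotienting $\C/(4K\Z+2iK'\Z)$ (rectangular) or $\C/(4K\Z+(2K+2iK')\Z)$ (rhombic) by $t\mapsto t+2K$ yields in both cases $\C/(2K\Z+2iK'\Z)$, which is rectangular. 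This is exactly the paper's argument and gives the rectangularity directly, with no need to invoke the diagonal biquadratic at this stage. Your Riemann--Hurwitz computation is also backwards: an involution of an elliptic curve \emph{with} fixed points has quotient $\CP^1$, not an elliptic curve; only a fixed-point-free involution (a half-period translation) yields an elliptic quotient. So the route you sketch would have collapsed to a rational curve.

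For the isomorphism $Q^\C(a)\cong Q^\C(\bar a)$, your idea is right in spirit but the execution plan is off. The diagonal relation is not of the form \eqref{eqn:Opp}; the paper derives it from the vanishing of a $3\times 3$ Gram determinant (flat tetrahedron), obtaining
\[
u^2v+uv^2+2d_{11}uv+d_{10}u+d_{01}v+d_{00}=0,\qquad u=x^2,\ v=y^2,
\]
with explicit coefficients $d_{ij}$ in the $a_i^2$. The isomorphism then follows from the purely elementary check that each $d_{ij}$ is unchanged under $a\mapsto\bar a$; for $d_{10},d_{01}$ this is immediate from $a_i-a_j=\bar a_j-\bar a_i$ and $a_i+a_j=\bar a_k+\bar a_l$, while $d_{11}$ and $d_{00}$ need the identities $a^2+b^2+c^2+d^2=\bar a^2+\bar b^2+\bar c^2+\bar d^2$ and $ab-cd=\tfrac12(\bar c^2+\bar d^2-\bar a^2-\bar b^2)$. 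No reciprocal-modulus transformation is needed, and the identification is automatically a morphism of curves since the two curves are literally the same curve in $(u,v)$-coordinates. Your final paragraph on rectangular versus rhombic via $\bar a_{\min}+\bar a_{\max}=2s-(a_{\min}+a_{\max})$ is correct and matches the paper.
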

Theorem \ref{thm:NonOr} is proved in Section \ref{sec:NonOr}.

\begin{figure}[ht]
\centering
\includegraphics{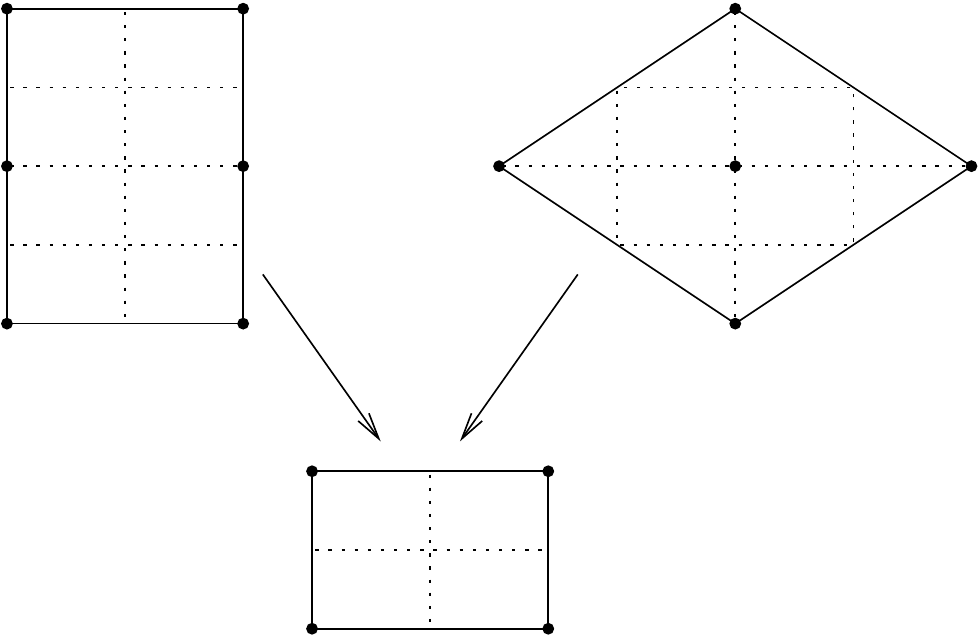}
\caption{Spaces $\QorC(a)$ and $\QorC(\bar a)$ covering $Q^{\C}(a) \cong Q^{\C}(\bar a)$.}
\label{fig:Covers}
\end{figure}

The isomorphism $Q^{\C}(a) \cong Q^{\C}(\bar a)$ turns out to be equivalent to the Ivory theorem, see Figure \ref{fig:Ivory}. Theorem \ref{thm:NonOr} holds also for spherical and hyperbolic quadrilaterals. This extends the Ivory theorem to the sphere and the hyperbolic plane, with ellipses and hyperbolas defined geodesically, see Theorem \ref{thm:IvorySph}.

Computations that allow to express in a particularly nice way the Jacobi modulus, amplitudes, and shifts in Theorem \ref{thm:Param}, and those leading to Theorem \ref{thm:NonOr} are based on a number of identities for dual quadruples $(a,b,c,d)$ and $(\bar a, \bar b, \bar c, \bar d)$. These are collected below.
\begin{gather*}
ab - \bar c \bar d = (s-a-c)(s-b-c), \quad ab - \bar a \bar b = s(s-c-d)\\
abcd - \bar a \bar b \bar c \bar d = s(s-a-b)(s-b-c)(s-a-c)\\
ab + cd = \bar a \bar b + \bar c \bar d, \quad
a^2 + b^2 + c^2 + d^2 = \bar a^2 + \bar b^2 + \bar c^2 + \bar d^2\\
ab - cd = \frac12 (\bar c^2 + \bar d^2 - \bar a^2 - \bar b^2)
\end{gather*}

\subsection{The periodicity condition}
Benoit and Hulin \cite{BH04} studied the periodicity condition by constructing a pair of circles whose Poncelet dynamics is equivalent to the folding dynamics of the quadrilateral.

The following theorem deals with the periodicity disregarding the orientation. Proposition \ref{prp:Period} describes how the period lengths on the curves $Q^{\C}(a)$ and $\QorC(a)$ are related.

\begin{thm}
\label{thm:Period}
A quadrilateral with the side lengths $a$ is $n$-periodic disregarding the orientation if and only if the following condition is satisfied.
\begin{align*}
&\begin{vmatrix}
A_2 & A_3 & \ldots & A_{k+1}\\
A_3 & A_4 & \ldots & A_{k+2}\\
\vdots & \vdots & \ddots & \vdots\\
A_{k+1} & A_{k+2} & \ldots & A_{2k}
\end{vmatrix}
= 0, \quad \text{ if } n = 2k+1\\
&\begin{vmatrix}
A_3 & A_4 & \ldots & A_{k+1}\\
A_4 & A_5 & \ldots & A_{k+2}\\
\vdots & \vdots & \ddots & \vdots\\
A_{k+1} & A_{k+2} & \ldots & A_{2k-1}
\end{vmatrix}
= 0, \quad \text{ if } n = 2k
\end{align*}
Here $A_i$ are the coefficients of the expansion
\[
\sqrt{(x - \Delta(a)^2)(x - \delta(a)^2)(x - \delta(\bar a)^2)} = \sum_{i=0}^\infty A_i x^i
\]
where
\[
\Delta(a) = a_1a_3 + a_2a_4, \quad \delta(a) = a_1a_3 - a_2a_4
\]
\end{thm}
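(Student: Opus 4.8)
The plan is to reduce the folding dynamics on the non-oriented configuration space $Q^{\C}(a)$ to a translation on an elliptic curve, and then to express the condition ``the translation by the relevant amount has order dividing $n$'' as a Cayley-type determinantal vanishing, exactly as in the classical treatment of the Poncelet porism.

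First I would set up the curve explicitly in terms of the diagonal lengths. By the discussion around \eqref{eqn:Opp} and Theorem \ref{thm:NonOr}, the space $Q^{\C}(a)$ carries a natural coordinate coming from the biquadratic relation between the two diagonals; squaring the diagonals, one gets an affine curve of the form $y^2 = (x - \Delta(a)^2)(x - \delta(a)^2)(x - \delta(\bar a)^2)$ after clearing the obvious factors, where $x$ is (a scaling of) the square of one diagonal. The three branch points $\Delta(a)^2$, $\delta(a)^2$, $\delta(\bar a)^2$ are precisely the degenerate configurations: $\Delta(a)$ and $\delta(a)$ are the maximal and the ``crossed'' extreme values of a diagonal, and $\delta(\bar a)$ enters through the isomorphism $Q^{\C}(a)\cong Q^{\C}(\bar a)$. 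I would verify these identifications using the dual-quadruple identities listed just before this subsection; this is the bookkeeping step that ties $A_i$ to genuine geometric quantities.

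Next I would identify the folding map. A $C$-folding fixes one diagonal and reflects; in the diagonal coordinates it is the hyperelliptic involution composed with a fixed shift, i.e. an involution on the elliptic curve; likewise for the $D$-folding. Hence $F_C\circ F_D$ is a translation by a fixed vector $v\in\C/\Lambda$, and $n$-periodicity of one (equivalently every) quadrilateral means $nv\in\Lambda$, i.e.\ $v$ is an $n$-torsion point — here $v$ is the class of the point of the curve lying over $x=\infty$ on the chosen sheet, relative to the base point. Thus the theorem becomes: a specific point $P_\infty$ on $E:\ y^2=\prod(x-e_i)$ is $n$-torsion if and only if the stated determinant vanishes.

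The remaining, and main, step is the Cayley-type computation. This is the standard argument: $nP_\infty = 0$ on $E$ is equivalent to the existence of a function on $E$ with a pole of order exactly $n$ (or $n$ and $2$, according to parity) at $P_\infty$ and no other poles, which in turn is detected by a rank drop in the matrix of coefficients of the power-series expansion of $y=\sqrt{\prod(x-e_i)}$ at $x=\infty$ against $1,x,x^2,\dots$ — precisely the Hankel-type determinants in the statement, with the index shift ($A_2,\dots$ for $n=2k+1$ versus $A_3,\dots$ for $n=2k$) reflecting which powers of $x$ are available in the Riemann–Roch space $L(nP_\infty)$. I would carry this out by writing $\sqrt{(x-\Delta(a)^2)(x-\delta(a)^2)(x-\delta(\bar a)^2)}=\sum A_i x^i$ as a series in $x$ (not $1/x$), matching the normalization implicit in the definition of $A_i$, and invoking the classical lemma (as in the Poncelet/Halphen literature, e.g.\ \cite{DR14}) that translates torsion of the point at infinity on an elliptic curve in Weierstrass-type form into vanishing of exactly these determinants. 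The one point requiring care is that our curve is presented with a \emph{cubic} (three finite branch points, one at infinity) rather than the symmetric quartic form; I would either move a branch point to infinity by a fractional-linear change of $x$, or run the expansion argument directly in the cubic model, keeping track of how $P_\infty$ sits as a $2$-torsion-translate of the Weierstrass origin. I expect this normalization/change-of-model bookkeeping, together with pinning down the exact shift $v$ produced by $F_C\circ F_D$, to be the real obstacle; once those are fixed, the determinantal criterion follows from the classical computation verbatim.
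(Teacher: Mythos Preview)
Your overall architecture is right --- reduce to a translation on $Q^{\C}(a)$ and invoke the Griffiths--Harris/Cayley criterion --- and that is exactly what the paper does. But there is a genuine gap in how you produce the cubic $y^2=(x-\Delta(a)^2)(x-\delta(a)^2)(x-\delta(\bar a)^2)$.

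You claim that ``squaring the diagonals'' and scaling already gives a curve with branch values $\Delta(a)^2,\delta(a)^2,\delta(\bar a)^2$, and that these are ``the maximal and crossed extreme values of a diagonal''. That is not so: the squared diagonal $u$ is a degree-$2$ function on $Q^{\C}(a)$ whose \emph{four} finite branch values are $(a_1\pm a_2)^2$ and $(a_3\pm a_4)^2$, not the Ptolemy-type quantities $a_1a_3\pm a_2a_4$. More importantly, the Griffiths--Harris criterion does not test ``$P_\infty$ is $n$-torsion'' for an arbitrary model; it requires a meromorphic $x$ with a double pole at a chosen branch point $p$ \emph{and a simple zero at} $q=\hat F_3\circ\hat F_4(p)$, since the quantity being tested is $n(p-q)=0$. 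The diagonal coordinate $u$ does not vanish at $q$, so it cannot be used directly.

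The paper's actual work is precisely this missing step: take $p$ to be the branch point with $u(p)=(a_3+a_4)^2$, compute $u(q)$ explicitly (this is Lemma~\ref{lem:uq}, obtained by solving the biquadratic \eqref{eqn:DiagCoord} at the fixed value $v=v(p)$), form the M\"obius transform $x=\dfrac{u-u(q)}{u-u(p)}$, and then carry out a ``lengthy calculation'' to evaluate $x$ at the remaining three branch points $(a_1+a_2)^2,(a_1-a_2)^2,(a_3-a_4)^2$. Only after this transform and a final rescaling do the three branch values miraculously simplify to $\Delta(a)^2,\delta(a)^2,\delta(\bar a)^2$. Your proposal skips over exactly this computation, and your heuristic for why those particular numbers appear is incorrect; without it you have the right framework but not the theorem.
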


Note that $\Delta(\bar a) = \Delta(a)$ due to the identities from Section \ref{sec:Conj}, so that the quadrilaterals with the side lengths $\bar a$ have the same folding period (disregarding the orientation) as the quadrilaterals with the side lengths $a$. This is also obvious from the isomorphism $Q^{\C}(a) \cong Q^{\C}(\bar a)$ via equal diagonal lengths.

\subsection{Acknowledgments}
Parts of this work were done during the author's visits to IHP Paris and to the Penn State University. The author thanks both institutions for hospitality. Also he wishes to thank Arseniy Akopyan, Udo Hertrich-Jeromin, Boris Springborn, and Yuri Suris for useful discussions.

%
%
%
%
%
%

\section{The space of oriented quadrilaterals in terms of their angles}
\label{sec:Eq}
\subsection{Notation}
A \emph{planar quadrilateral} is for us an ordered quadruple of points $(A, B, C, D)$ in the euclidean plane such that
\[
A \ne B,\quad B \ne C,\quad C \ne D,\quad D \ne A
\]
Two quadrilaterals $ABCD$ and $A'B'C'D'$ are called \emph{directly congruent} if there exists an orientation-preserving isometry $f \colon \R^2 \to \R^2$ such that
\[
f(A) = A',\quad f(B) = B',\quad f(C) = C',\quad f(D) = D'
\]
In this article, we study the set of (direct) congruence classes of quadrilaterals with fixed side lengths. A mechanical interpretation of this is the configuration space of a four-bar linkage with the positions of two adjacent joints fixed and whose bars are allowed to cross each other.

For a quadruple $a := (a_1, a_2, a_3, a_4)$ of real numbers there exists a
quadrilateral with side lengths $a$ if and only if the following inequalities hold:
\begin{subequations}
\begin{equation}
\label{eqn:QuadIneqA}
a_i > 0 \quad \forall i
\end{equation}
\begin{equation}
\label{eqn:QuadIneqB}
a_i < a_j + a_k + a_l \quad \forall i,
\end{equation}
\end{subequations}
where $\{j,k,l\} = \{1,2,3,4\} \setminus \{i\}$ in the second line.

\begin{dfn}
For $a \in \R^4$ satisfying the conditions \eqref{eqn:QuadIneqA} and \eqref{eqn:QuadIneqB}, denote by $\Qor(a)$ the set of direct congruence classes of quadrilaterals $ABCD$ with
\[
AB = a_1, \quad BC = a_2, \quad CD = a_3, \quad DA = a_4
\]
\end{dfn}

\begin{rem}
We require a congruence to preserve the marking of the vertices (or, equivalently, the marking of the sides). This does matter only if the sequence $a$ is symmetric under the action of an element of the dihedral group on $(1, 2, 3, 4)$.
\end{rem}

Denote by $\phi_i$ the angle between the sides marked by $i-1$ and $i (\mod\
4)$. More exactly, $\phi_i$ are the turning angles for the velocity vector of a point that
runs along the perimeter in the direction given by the cyclic order $(1,2,3,4)$, see Figure
\ref{fig:SidesAngles}, left.

\begin{figure}[ht]
\centering
\begin{picture}(0,0)%
\includegraphics{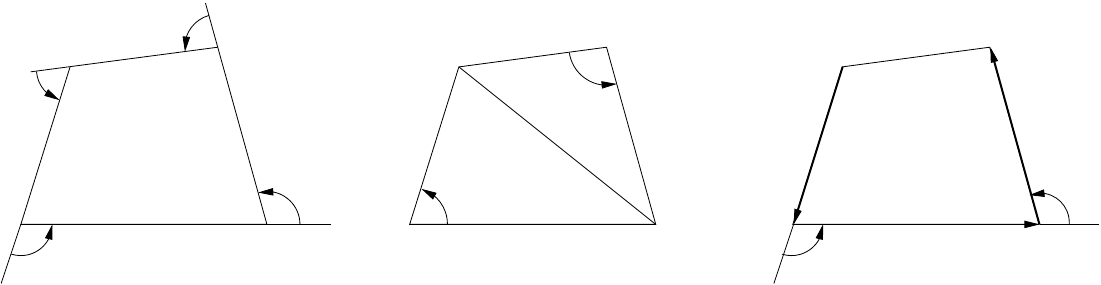}%
\end{picture}%
\setlength{\unitlength}{2072sp}%
\begingroup\makeatletter\ifx\SetFigFont\undefined%
\gdef\SetFigFont#1#2#3#4#5{%
  \reset@font\fontsize{#1}{#2pt}%
  \fontfamily{#3}\fontseries{#4}\fontshape{#5}%
  \selectfont}%
\fi\endgroup%
\begin{picture}(10059,2588)(-191,-1333)
\put(2044,144){\makebox(0,0)[lb]{\smash{{\SetFigFont{9}{10.8}{\rmdefault}{\mddefault}{\updefault}{\color[rgb]{0,0,0}$a_2$}%
}}}}
\put(7156,-1169){\makebox(0,0)[lb]{\smash{{\SetFigFont{9}{10.8}{\rmdefault}{\mddefault}{\updefault}{\color[rgb]{0,0,0}$\phi_1$}%
}}}}
\put(3511,-16){\makebox(0,0)[lb]{\smash{{\SetFigFont{9}{10.8}{\rmdefault}{\mddefault}{\updefault}{\color[rgb]{0,0,0}$a_4$}%
}}}}
\put(9136,164){\makebox(0,0)[lb]{\smash{{\SetFigFont{9}{10.8}{\rmdefault}{\mddefault}{\updefault}{\color[rgb]{0,0,0}$v_2$}%
}}}}
\put(6976, 29){\makebox(0,0)[lb]{\smash{{\SetFigFont{9}{10.8}{\rmdefault}{\mddefault}{\updefault}{\color[rgb]{0,0,0}$v_4$}%
}}}}
\put(1126,-961){\makebox(0,0)[lb]{\smash{{\SetFigFont{9}{10.8}{\rmdefault}{\mddefault}{\updefault}{\color[rgb]{0,0,0}$a_1$}%
}}}}
\put(4681,-961){\makebox(0,0)[lb]{\smash{{\SetFigFont{9}{10.8}{\rmdefault}{\mddefault}{\updefault}{\color[rgb]{0,0,0}$a_1$}%
}}}}
\put(8056,-1006){\makebox(0,0)[lb]{\smash{{\SetFigFont{9}{10.8}{\rmdefault}{\mddefault}{\updefault}{\color[rgb]{0,0,0}$v_1$}%
}}}}
\put(9541,-601){\makebox(0,0)[lb]{\smash{{\SetFigFont{9}{10.8}{\rmdefault}{\mddefault}{\updefault}{\color[rgb]{0,0,0}$\phi_2$}%
}}}}
\put(633,757){\makebox(0,0)[lb]{\smash{{\SetFigFont{9}{10.8}{\rmdefault}{\mddefault}{\updefault}{\color[rgb]{0,0,0}$a_3$}%
}}}}
\put(-166,-285){\makebox(0,0)[lb]{\smash{{\SetFigFont{9}{10.8}{\rmdefault}{\mddefault}{\updefault}{\color[rgb]{0,0,0}$a_4$}%
}}}}
\put(2521,-561){\makebox(0,0)[lb]{\smash{{\SetFigFont{9}{10.8}{\rmdefault}{\mddefault}{\updefault}{\color[rgb]{0,0,0}$\phi_2$}%
}}}}
\put(131,-1200){\makebox(0,0)[lb]{\smash{{\SetFigFont{9}{10.8}{\rmdefault}{\mddefault}{\updefault}{\color[rgb]{0,0,0}$\phi_1$}%
}}}}
\put(-52,300){\makebox(0,0)[lb]{\smash{{\SetFigFont{9}{10.8}{\rmdefault}{\mddefault}{\updefault}{\color[rgb]{0,0,0}$\phi_4$}%
}}}}
\put(1222,1054){\makebox(0,0)[lb]{\smash{{\SetFigFont{9}{10.8}{\rmdefault}{\mddefault}{\updefault}{\color[rgb]{0,0,0}$\phi_3$}%
}}}}
\put(3906,-649){\makebox(0,0)[lb]{\smash{{\SetFigFont{9}{10.8}{\rmdefault}{\mddefault}{\updefault}{\color[rgb]{0,0,0}$\pi-\phi_1$}%
}}}}
\put(4554,-37){\makebox(0,0)[lb]{\smash{{\SetFigFont{9}{10.8}{\rmdefault}{\mddefault}{\updefault}{\color[rgb]{0,0,0}$y$}%
}}}}
\put(4392,838){\makebox(0,0)[lb]{\smash{{\SetFigFont{9}{10.8}{\rmdefault}{\mddefault}{\updefault}{\color[rgb]{0,0,0}$a_3$}%
}}}}
\put(5710,-225){\makebox(0,0)[lb]{\smash{{\SetFigFont{9}{10.8}{\rmdefault}{\mddefault}{\updefault}{\color[rgb]{0,0,0}$a_2$}%
}}}}
\put(4525,360){\makebox(0,0)[lb]{\smash{{\SetFigFont{9}{10.8}{\rmdefault}{\mddefault}{\updefault}{\color[rgb]{0,0,0}$\pi-\phi_3$}%
}}}}
\end{picture}%
\caption{Notations; finding a relation for opposite
angles; finding a relation for adjacent angles.}
\label{fig:SidesAngles}
\end{figure}

\subsection{Equations relating the angles of a quadrilateral}
\begin{lem}
\label{lem:CosLinDep}
The cosines of opposite angles of a quadrilateral are subject to a linear dependence, with coefficients depending on the side lengths.
\end{lem}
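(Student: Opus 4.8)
The plan is to use the law of cosines applied to both diagonals of the quadrilateral $ABCD$. Consider the diagonal $AC$. In triangle $ABC$ the sides are $AB = a_1$, $BC = a_2$, and the included angle at $B$ is $\phi_2$, so
\[
AC^2 = a_1^2 + a_2^2 - 2a_1 a_2 \cos\phi_2.
\]
On the other hand, in triangle $ACD$ the sides are $CD = a_3$, $DA = a_4$, with included angle at $D$ equal to $\phi_4$, so
\[
AC^2 = a_3^2 + a_4^2 - 2a_3 a_4 \cos\phi_4.
\]
Equating the two expressions eliminates the diagonal and yields a single affine relation
\[
a_1 a_2 \cos\phi_2 - a_3 a_4 \cos\phi_4 = \tfrac12\bigl(a_1^2 + a_2^2 - a_3^2 - a_4^2\bigr),
\]
which is precisely a linear dependence between $\cos\phi_2$ and $\cos\phi_4$ with coefficients depending only on the side lengths. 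The same argument applied to the other diagonal $BD$ (using triangles $ABD$ and $BCD$, with included angles $\phi_1$ at $A$ and $\phi_3$ at $C$) gives
\[
a_1 a_4 \cos\phi_1 - a_2 a_3 \cos\phi_3 = \tfrac12\bigl(a_1^2 + a_4^2 - a_2^2 - a_3^2\bigr),
\]
establishing the claim for the pair $(\phi_1, \phi_3)$ as well.

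The only subtlety is that a planar quadrilateral need not be convex, and in the degenerate or crossed cases the vertices $B$ and $D$ may lie on the same side of the line $AC$, or the ``triangles'' may be degenerate (collinear triples). However, the law of cosines $AC^2 = a_1^2 + a_2^2 - 2a_1 a_2\cos\phi_2$ holds as an identity between the squared distance $|A - C|^2$ and the angle $\phi_2$ regardless of orientation or degeneracy, since it is just the expansion of $|(B-A) - (B-C)|^2$ with $\phi_2$ the angle between the vectors $A - B$ and $C - B$; here one must check that $\phi_2$ as defined via turning angles of the velocity vector along the perimeter indeed agrees (up to the irrelevant sign in the cosine) with the angle of the triangle at $B$. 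This is a matter of unwinding the sign conventions in Figure \ref{fig:SidesAngles} and causes no real difficulty.

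I do not expect any genuine obstacle here; the statement is essentially the observation that each diagonal, squared, can be computed in two ways, and equating them removes the diagonal. The one point deserving care is the orientation bookkeeping for the angles $\phi_i$, but this affects at most the sign of the cosine terms and not the existence of a linear dependence. A remark worth recording for later use is that the right-hand sides of the two relations above are the same linear combination of the $a_i^2$ up to relabeling, which foreshadows the symmetric role played by the two diagonals.
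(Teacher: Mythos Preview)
Your proof is correct and follows essentially the same approach as the paper: compute the squared diagonal length in two ways via the cosine law and equate. The only discrepancy is a sign convention---the paper's $\phi_i$ are turning (exterior) angles, so the paper obtains $a_1^2 + a_4^2 + 2a_1a_4\cos\phi_1 = a_2^2 + a_3^2 + 2a_2a_3\cos\phi_3$ rather than your minus signs---but you correctly flagged this as a bookkeeping issue that does not affect the existence of the linear relation.
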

\begin{proof}
By expressing the diagonal length $y$ on Figure \ref{fig:SidesAngles} with the
help of the cosine
law first through $a_1, a_2, \phi_1$ and then through $a_3, a_4, \phi_3$,
we obtain
\begin{equation}
\label{eqn:LinCos}
a_1^2 + a_4^2 + 2a_1a_4 \cos\phi_1 = a_2^2 + a_3^2 + 2a_2a_3 \cos\phi_3
= 0
\end{equation}
\end{proof}
This result is classical. Bricard \cite{Bri97} mentions it as well-known. The following substitution also appears in \cite{Bri97}.
\begin{equation}
\label{eqn:XTan}
z_i := \tan \frac{\phi_i}2
\end{equation}

\begin{prp}
\label{prp:OppAngles}
The tangents of the opposite half-angles of a quadrilateral satisfy the following algebraic relation
\begin{equation}
\label{eqn:OppX}
b_{22} z_1^2 z_3^2 + b_{20} z_1^2 + b_{02} z_3^2 + b_{00} = 0, \quad
\text{where}
\end{equation}
\begin{equation*}
\begin{aligned}
b_{22} &= (a_1 + a_2 - a_3 - a_4)(a_1 - a_2 + a_3 - a_4)\\
b_{20} &= (a_1 + a_2 + a_3 - a_4)(a_1 - a_2 - a_3 - a_4)\\
b_{02} &= (a_1 - a_2 + a_3 + a_4)(a_1 + a_2 - a_3 + a_4)\\
b_{00} &= (a_1 - a_2 - a_3 + a_4)(a_1 + a_2 + a_3 + a_4)
\end{aligned}
\end{equation*}
\end{prp}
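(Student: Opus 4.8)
The plan is to start from the linear relation \eqref{eqn:LinCos} of Lemma \ref{lem:CosLinDep} and convert it into a polynomial identity in $z_1 = \tan\frac{\phi_1}2$ and $z_3 = \tan\frac{\phi_3}2$ using the standard half-angle substitution $\cos\phi_i = \frac{1-z_i^2}{1+z_i^2}$. Substituting this into
\[
a_1^2 + a_4^2 + 2a_1a_4\cos\phi_1 = a_2^2 + a_3^2 + 2a_2a_3\cos\phi_3
\]
and clearing the denominators $(1+z_1^2)$ and $(1+z_3^2)$ yields a relation of the shape $b_{22}z_1^2z_3^2 + b_{20}z_1^2 + b_{02}z_3^2 + b_{00} = 0$; note that there is no $z_1^2 z_3$ or $z_1 z_3^2$ term precisely because $\cos\phi_1$ and $\cos\phi_3$ each appear linearly and decoupled, so the mixed term $z_1z_3$ is also absent. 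Explicitly, the left side times $(1+z_1^2)(1+z_3^2)$ gives
\[
\bigl[(a_1^2+a_4^2)(1+z_1^2) + 2a_1a_4(1-z_1^2)\bigr](1+z_3^2)
- \bigl[(a_2^2+a_3^2)(1+z_3^2) + 2a_2a_3(1-z_3^2)\bigr](1+z_1^2) = 0.
\]

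Next I would collect the coefficients of the four monomials $z_1^2z_3^2$, $z_1^2$, $z_3^2$, $1$. Reading off:
\[
b_{22} = (a_1-a_4)^2 - (a_2-a_3)^2,\qquad b_{00} = (a_1+a_4)^2 - (a_2+a_3)^2,
\]
and the two mixed coefficients are
\[
b_{20} = (a_1-a_4)^2 - (a_2+a_3)^2,\qquad b_{02} = (a_1+a_4)^2 - (a_2-a_3)^2.
\]
Here $b_{20}$ is the coefficient of $z_1^2$ (it comes from the $(a_1^2+a_4^2) + 2a_1a_4$-type contribution on the $z_1$ side multiplied by the constant term $1$ from the $z_3$ side, minus the $(a_2^2+a_3^2) - 2a_2a_3$ contribution, i.e. it mixes a "difference" on the $1$-side with a "sum" on the $4$-side) — one must be careful with exactly which of $b_{20}$, $b_{02}$ gets which pair of signs, and it is worth double-checking against the symmetry $\phi_1 \leftrightarrow \phi_3$, which should swap $b_{20}$ and $b_{02}$ while fixing $b_{22}$, $b_{00}$, together with the substitution $(a_1,a_4)\leftrightarrow(a_2,a_3)$.

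Finally, each of these four expressions is a difference of squares, so $b_{22} = \bigl((a_1-a_4)-(a_2-a_3)\bigr)\bigl((a_1-a_4)+(a_2-a_3)\bigr)$ and similarly for the others; regrouping the signs gives exactly the factorizations claimed in the statement, e.g. $b_{22} = (a_1 + a_2 - a_3 - a_4)(a_1 - a_2 + a_3 - a_4)$, $b_{00} = (a_1 - a_2 - a_3 + a_4)(a_1 + a_2 + a_3 + a_4)$, and the corresponding products for $b_{20}$, $b_{02}$. This is an entirely elementary computation with no genuine obstacle; the only place requiring care — and the only thing I would call the ``hard part'' — is bookkeeping the signs so that $b_{20}$ and $b_{02}$ are attached to the correct monomial and factored with the correct sign pattern, rather than being accidentally interchanged. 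I would verify the final answer by evaluating at a convenient special case (for instance a rectangle, where $\phi_1 = \phi_3 = \frac\pi2$ forces $z_1 = z_3 = 1$ and the relation must reduce to $b_{22} + b_{20} + b_{02} + b_{00} = 0$, an identity that the claimed factorizations indeed satisfy).
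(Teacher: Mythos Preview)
Your proposal is correct and takes exactly the same approach as the paper: substitute $\cos\phi_i = \frac{1-z_i^2}{1+z_i^2}$ into the linear relation \eqref{eqn:LinCos} and compute. The paper dismisses the whole thing as ``a simple computation,'' whereas you spell out the intermediate step of writing each coefficient as a difference of squares before factoring --- this is helpful detail, but not a different argument.
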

\begin{proof}
Substitute $\cos\phi_1 = \frac{1 - z_1^2}{1 + z_1^2}$
and $\cos\phi_3 = \frac{1 - z_3^2}{1 + z_3^2}$ in \eqref{eqn:LinCos}. A simple computation yields
\eqref{eqn:OppX}.
\end{proof}

With a bit more work one finds a relation between pairs of adjacent angles.
\begin{prp}
\label{prp:AdjCoord}
The tangents of the adjacent half-angles of a quadrilateral satisfy the equation
\begin{equation}
\label{eqn:AdjX}
c_{22} z_1^2 z_2^2 + c_{20} z_1^2 + c_{02} z_2^2 + 2 c_{11} z_1 z_2 + c_{00} = 0,
\quad \text{where}
\end{equation}
\begin{equation*}
\begin{aligned}
c_{22} &= (a_1 - a_2 - a_3 - a_4)(a_1 - a_2 + a_3 - a_4)\\
c_{20} &= (a_1 + a_2 + a_3 - a_4)(a_1 + a_2 - a_3 - a_4)\\
c_{02} &= (a_1 - a_2 + a_3 + a_4)(a_1 - a_2 - a_3 + a_4)\\
c_{11} &= -4a_2a_4\\
c_{00} &= (a_1 + a_2 - a_3 + a_4)(a_1 + a_2 + a_3 + a_4)
\end{aligned}
\end{equation*}
\end{prp}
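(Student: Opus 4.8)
The plan is to mimic the two-step proof of Proposition~\ref{prp:OppAngles}. First I would establish the analog of the linear relation~\eqref{eqn:LinCos} for a pair of \emph{adjacent} angles, namely a relation between $\cos\phi_1$, $\cos\phi_2$, $\cos(\phi_1+\phi_2)$ and $1$ with coefficients depending only on the side lengths; then I would substitute the tangent half-angle expressions and clear denominators to arrive at~\eqref{eqn:AdjX}.

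To obtain the linear relation I would use coordinates rather than a diagonal. Put $A$ at the origin and $B=(a_1,0)$. By the definition of $\phi_2$ as the turning angle at $B$, the vertex $C$ is $C=(a_1+a_2\cos\phi_2,\ a_2\sin\phi_2)$; by the definition of $\phi_1$ at $A$, the vertex $D$ is $D=(-a_4\cos\phi_1,\ a_4\sin\phi_1)$. Imposing the only remaining constraint $|CD|^2=a_3^2$ and expanding, the $a_1a_2$- and $a_1a_4$-terms are linear in $\cos\phi_2$ and $\cos\phi_1$ respectively, while the two $a_2a_4$-terms combine as $2a_2a_4(\cos\phi_1\cos\phi_2-\sin\phi_1\sin\phi_2)=2a_2a_4\cos(\phi_1+\phi_2)$, yielding
\[
a_1^2+a_2^2+a_4^2-a_3^2+2a_1a_4\cos\phi_1+2a_1a_2\cos\phi_2+2a_2a_4\cos(\phi_1+\phi_2)=0.
\]
(One could instead eliminate $\phi_3,\phi_4$ from the two relations of type~\eqref{eqn:LinCos} together with $\phi_1+\phi_2+\phi_3+\phi_4=2\pi$, but the direct computation is cleaner and does not rely on the angle sum.)

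Then I would substitute $\cos\phi_i=\frac{1-z_i^2}{1+z_i^2}$, $\sin\phi_i=\frac{2z_i}{1+z_i^2}$ (recall $z_i=\tan\frac{\phi_i}2$ from~\eqref{eqn:XTan}) and multiply through by $(1+z_1^2)(1+z_2^2)$, using the identity $\cos(\phi_1+\phi_2)(1+z_1^2)(1+z_2^2)=(1-z_1^2)(1-z_2^2)-4z_1z_2$. Writing $P=a_1^2+a_2^2+a_4^2-a_3^2$, the coefficient of $z_1^{2\epsilon}z_2^{2\eta}$ for $\epsilon,\eta\in\{0,1\}$ comes out to be $P+(-1)^\eta 2a_1a_2+(-1)^\epsilon 2a_1a_4+(-1)^{\epsilon+\eta}2a_2a_4=\bigl(a_1+(-1)^\eta a_2+(-1)^\epsilon a_4\bigr)^2-a_3^2$; factoring this difference of squares gives exactly $c_{00},c_{20},c_{02},c_{22}$. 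The only surviving off-diagonal monomial is $z_1z_2$, with coefficient $-8a_2a_4$, so that in the term $2c_{11}z_1z_2$ one has $c_{11}=-4a_2a_4$.

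The only real work is the bookkeeping in the middle step: one must pin down the orientation conventions so that it is $\cos(\phi_1+\phi_2)$, not $\cos(\phi_1-\phi_2)$, that appears (otherwise the cross term and the factorizations go wrong), and then keep track of the four diagonal monomials. The final factoring is routine, so I do not expect any genuine obstacle beyond organizing the algebra.
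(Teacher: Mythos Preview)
Your proposal is correct and follows essentially the same route as the paper: the paper expands $\|v_4+v_1+v_2\|^2=\|v_3\|^2$ (which is literally your $|CD|^2=a_3^2$) to obtain the identical relation in $\cos\phi_1$, $\cos\phi_2$, $\cos(\phi_1+\phi_2)$, and then performs the tangent half-angle substitution. Your version is in fact more explicit, supplying the organized bookkeeping and the difference-of-squares factorization that the paper leaves as ``a tedious computation.''
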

\begin{proof}
Denote by $v_i$ the vector running along the $i$-th side of the quadrilateral, see Figure \ref{fig:SidesAngles}, right. Expanding the scalar
product on the left
hand side of $\|v_4 + v_1 + v_2\|^2 = \|v_3\|^2$ we obtain
$$
(a_1^2 + a_2^2 - a_3^2 + a_4^2) + 2 a_1 a_4 \cos \phi_1 + 2 a_1 a_2 \cos
\phi_2 + 2 a_2 a_4
\cos (\phi_1 + \phi_2) = 0
$$
The substitution and a tedious computation produce \eqref{eqn:AdjX}.
\end{proof}

\subsection{Bihomogeneous equations and algebraic curves in $\CP^1 \times \CP^1$}
The substitution \eqref{eqn:XTan} identifies $\R/2\pi$, which is the range of $\phi_i$, with $\RP^1$, which is the range of $z_i$. Thus it is geometrically reasonable to consider equations \eqref{eqn:OppX} and \eqref{eqn:AdjX} as equations on $(\RP^1)^2$. The proper setting for this are bihomogeneous polynomials.
That is, we introduce projective variables $(z_1 : w_1)$ and $(z_2 : w_2)$ and rewrite equation \eqref{eqn:AdjX} as
\begin{equation}
\label{eqn:Bihom}
c_{22} z_1^2 z_2^2 + c_{20} z_1^2 w_2^2 + c_{02} w_1^2 z_2^2 + 2 c_{11} z_1w_1 z_2w_2 + c_{00} w_1^2 w_2^2 = 0
\end{equation}
This point of view doesn't affect the affine part of the curve but may change the number of points at infinity (the infinity of $\RP^1 \times \RP^1$ is the union of two projective lines instead of one for $\RP^2$). Indeed, while the usual projectivization of \eqref{eqn:AdjX} has two points $(1 : 0 : 0)$ and $(0 : 1 : 0)$ at infinity, the curve \eqref{eqn:Bihom} has four:
\[
\left( \pm
\sqrt{\frac{-c_{02}}{c_{22}}}, \infty \right), \quad \left( \infty, \pm
\sqrt{\frac{c_{20}}{c_{22}}}
\right)
\]
For a generic choice of coefficients $c_{ij}$ the curve \eqref{eqn:Bihom} is non-singular, as opposed to the usual projectivization of \eqref{eqn:AdjX}.

%

\subsection{A system of six equations}
By Propositions \ref{prp:OppAngles} and \ref{prp:AdjCoord}, the angles of every quadrilateral satisfy a system of six equations: two of the form \eqref{eqn:OppX} and four of the form \eqref{eqn:AdjX}. In this section we show that, vice versa, under a certain genericity assumption every solution of the system corresponds to a quadrilateral.

\begin{prp}
Assume that the quadruple $a$ is not made of two pairs of equal adjacent numbers:
\begin{equation}
\label{eqn:NonDeg}
\text{neither } a_1 = a_2, a_3 = a_4 \text{ nor } a_1 = a_4, a_2 = a_3
\end{equation}
Then every solution $(z_1, z_2, z_3, z_4)$ of the system of six equations on the pairs of angles corresponds to a unique quadrilateral in $\Qor(a)$.
\end{prp}
\begin{proof}
By reverting the argument in the proof of Proposition \ref{prp:AdjCoord} one sees that for every solution of \eqref{eqn:AdjX} there is a unique quadrilateral in $\Qor(a)$ with angles $\phi_1$ and $\phi_2$. The same is true for the other three equations relating adjacent angles. For every solution of \eqref{eqn:OppX} there is also a quadrilateral with angles $\phi_1$ and $\phi_3$. This quadrilateral can be non-unique only if the second and the fourth vertices coincide, for which $a_1=a_4$ and $a_2=a_3$ is needed.

Thus for every solution $(z_1, z_2, z_3, z_4)$ of the system of six equations there are six quadrilaterals $C_{ij} \in \Qor(a)$ with the property that in $C_{ij}$ the angles $\phi_i$ and $\phi_j$ have the correct values. Our goal is to show that there is $C$ with all correct angles. Consider $C_{12}$, $C_{13}$, and $C_{14}$. They all have the same $\phi_1$. Assumption \eqref{eqn:NonDeg} implies that there are at most two quadrilaterals in $\Qor(a)$ with a given $\phi_1$. Thus at least two of the three quadrilaterals must coincide. This yields a quadrilateral $C_1$ with three correct angles, one of which is $\phi_1$. If the fourth angle $\phi_i$ is incorrect, then again, consider the three quadrilaterals where this angle is correct and obtain a quadrilateral $C_i$ with three correct angles, one of which is $\phi_i$. As $C_1$ and $C_i$ have two angles in common, they coincide, and $C = C_1 = C_i$ is the desired quadrilateral.
\end{proof}

\subsection{Birational equivalence between \eqref{eqn:OppX} and \eqref{eqn:AdjX}}
We have just seen that the configuration space $\Qor(a)$ is an algebraic curve in $(\RP^1)^4$ given by six equations. It will often be convenient to restrict our attention to a single equation in two variables. The following statement allows us to do so.
\begin{prp}
\label{prp:AlgConfSpace}
The projection of $\Qor(a) \subset (\RP^1)^4$ to every coordinate plane $(z_i, z_{i+1})$ is
a birational equivalence.

The projection of $\Qor(a)$ to the coordinate plane $(z_i, z_{i+2})$ is a birational
equivalence unless $a_i = a_{i-1}$ and $a_{i+1} = a_{i+2}$. (All indices are taken in $\{1,2,3,4\}$ modulo $4$.)
\end{prp}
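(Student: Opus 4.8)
The plan is to prove that each of the six coordinate projections is a birational equivalence by exhibiting explicit inverse rational maps on a Zariski-open set, using the relevant biquadratic equations. First I would observe that $\Qor(a)$ is, by definition, the graph of a correspondence: a point of $\Qor(a)$ is a tuple $(z_1, z_2, z_3, z_4)$, and the projection $\pi_{i,i+1} \colon \Qor(a) \to V(c_{22}z_i^2z_{i+1}^2 + \cdots)$ to the $(z_i, z_{i+1})$-plane is surjective onto the affine curve defined by \eqref{eqn:AdjX} (for the appropriate cyclic relabelling of side lengths). The nondegeneracy assumption \eqref{eqn:QuadIneqA}--\eqref{eqn:QuadIneqB} already forces the coefficients, in particular $c_{22}$, to be nonzero in the generic situation; I would note which coefficients must be nonzero and handle those as separate cases or assume the genericity implicit in the surrounding discussion.

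The key step is to show that $\pi_{i,i+1}$ has a rational inverse. Given $(z_i, z_{i+1})$ on the curve \eqref{eqn:AdjX}, I recover $z_{i+2}$ and $z_{i+3}$ as follows. The pair $(z_{i+1}, z_{i+2})$ satisfies another equation of type \eqref{eqn:AdjX}, which is a quadratic in $z_{i+2}$ with coefficients rational in $z_{i+1}$; similarly $(z_i, z_{i+3})$ satisfies \eqref{eqn:AdjX} quadratic in $z_{i+3}$. A priori each quadratic has two roots, so this does not immediately give a rational inverse. The trick is to also use the opposite-angle relation \eqref{eqn:OppX} between $z_i$ and $z_{i+2}$: this gives a second equation, linear after the substitution $\cos\phi = (1-z^2)/(1+z^2)$ reverts to the cosine form \eqref{eqn:LinCos}, so $z_{i+2}^2$ is a rational function of $z_i^2$ alone. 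Intersecting the conic $\{(z_{i+1}, z_{i+2}) : \text{\eqref{eqn:AdjX}}\}$ with the rational curve fixing $z_{i+2}^2$ pins down $z_{i+2}$ rationally (one uses the equation linear in $z_{i+2}$ obtained by subtracting suitable multiples). The same works for $z_{i+3}$, using the opposite relation between $z_{i+1}$ and $z_{i+3}$. This produces a rational map back, inverse to $\pi_{i,i+1}$ on a dense open set, which is what birational equivalence means.

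For the projection $\pi_{i,i+2}$ to an opposite-angle plane, the situation is the one already essentially analyzed in the proof of the preceding proposition: given $(z_i, z_{i+2})$ one must recover $z_{i+1}$ and $z_{i+3}$, and there is a genuine two-valued ambiguity precisely when the second and fourth vertices of the quadrilateral can coincide, i.e. when $a_i = a_{i-1}$ and $a_{i+1} = a_{i+2}$ (the same degenerate configuration flagged there). Away from that case, the relation \eqref{eqn:AdjX} between $z_i$ and $z_{i+1}$ together with \eqref{eqn:AdjX} between $z_{i+2}$ and $z_{i+1}$ gives two quadratics in $z_{i+1}$; their resultant vanishes on the curve, and subtracting them yields a linear equation determining $z_{i+1}$ rationally in $(z_i, z_{i+2})$, and likewise for $z_{i+3}$. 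When the excluded equality holds, the two quadratics become proportional, the linear combination degenerates, and $z_{i+1}$ is only determined up to the sign ambiguity $z_{i+1} \leftrightarrow$ (the other root), so the projection is two-to-one rather than birational.

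I expect the main obstacle to be bookkeeping rather than conceptual: one has to check that the linear combinations used to eliminate the sign ambiguity do not vanish identically on $\Qor(a)$ — equivalently, that the relevant $2\times 2$ determinants of coefficients are not zero as functions on the curve — and to verify that this non-vanishing is exactly governed by the condition $a_i = a_{i-1},\ a_{i+1} = a_{i+2}$ in the opposite-angle case and by nothing in the adjacent-angle case. This amounts to computing a discriminant-type expression in terms of the $b_{ij}$ and $c_{ij}$ from Propositions \ref{prp:OppAngles} and \ref{prp:AdjCoord} and factoring it into the linear forms $a_1 \pm a_2 \pm a_3 \pm a_4$; the factorization should make the dichotomy transparent.
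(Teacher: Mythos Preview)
Your approach is essentially correct but differs from the paper's in an instructive way. The paper does not eliminate between the biquadratic equations at all: instead it goes back to the geometric closing condition $v_1+v_2+v_3+v_4=0$ and projects it onto the line of one side and its orthogonal complement. For the adjacent case this yields $\cos\phi_3$ and $\sin\phi_3$ directly as rational functions of $\phi_1,\phi_2$, whence $z_3=(1-\cos\phi_3)/\sin\phi_3$ is rational in $z_1,z_2$. For the opposite case the same two projected equations form a \emph{linear} $2\times 2$ system in $(\cos\phi_2,\sin\phi_2)$, and the determinant $(a_1+a_4\cos\phi_1)^2+a_4^2\sin^2\phi_1$ visibly vanishes only when $\phi_1=\pi$ and $a_1=a_4$ (forcing $a_2=a_3$). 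So the excluded deltoid condition falls out of a single determinant, with no factoring of discriminants required.

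Your algebraic route---substituting $z_{i+2}^2$ from \eqref{eqn:OppX} into \eqref{eqn:AdjX} for the adjacent case, and eliminating the quadratic term between two copies of \eqref{eqn:AdjX} for the opposite case---works too, and has the virtue of staying entirely within the biquadratic framework. The cost is exactly the bookkeeping you flagged: you must check that the coefficient of the surviving linear term does not vanish identically on the curve, and in the opposite case this means controlling an expression like $c_{11}z_1(c'_{22}z_3^2+c'_{20})-c'_{11}z_3(c_{22}z_1^2+c_{02})$ modulo \eqref{eqn:OppX}. This is doable but messier than the paper's determinant; note also that in the deltoid case the leading coefficients $c_{22},c_{20}$ (or their primed analogues) themselves vanish, so the degeneration is not that the two quadratics become proportional but that one of them drops degree, which slightly changes the shape of your final check.
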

\begin{proof}
It suffices to consider the case $i=1$.
As noted before, the projection of $\Qor(a)$ to the $(z_1, z_2)$-plane is injective. It suffices to show that for every $(z_1, z_2, z_3, z_4) \in \Qor(a)$ the values of $z_3$ and $z_4$ are rational functions of $z_1$ and $z_2$. By projecting the quadrilateral to its second side and to the line orthogonal to it, we obtain
\begin{equation}
\label{eqn:Birat}
\begin{aligned}
a_1 \cos\phi_2 + a_2 + a_3 \cos\phi_3 + a_4 \cos(\phi_1+\phi_2) = 0\\
a_1 \sin\phi_2 - a_3 \sin\phi_3 + a_4 \sin(\phi_1+\phi_2) = 0
\end{aligned}
\end{equation}
It follows that
\[
z_3 = \frac{1-\cos\phi_3}{\sin\phi_3} = \frac{a_1 \cos\phi_2 + a_2 + a_3 + a_4 \cos(\phi_1+\phi_2)}{a_1 \sin\phi_2 + a_4 \sin(\phi_1+\phi_2)}
\]
which is rational in $z_1$ and $z_2$.

Similarly, if either $a_1 \ne a_4$ or $a_2 \ne a_3$, then the projection of $\Qor(a)$ to the $(z_1, z_3)$-plane is injective. To show that the inverse map is rational, rewrite \eqref{eqn:Birat} as
\[
\begin{aligned}
(a_1 + a_4 \cos\phi_1) \cos\phi_2 - a_4 \sin\phi_1 \cdot \sin\phi_2 = -a_2 - a_3\cos\phi_3\\
a_4\sin\phi_1 \cdot \cos\phi_2 + (a_1 + a_4 \cos\phi_1) \sin\phi_2 = a_3 \sin\phi_3
\end{aligned}
\]
This is a system of linear equations on $\cos\phi_2$ and $\sin\phi_2$ with the determinant $(a_1 + a_4 \cos\phi_1 )^2 + a_4^2 \sin^2\phi_1$. The determinant vanishes only for $\phi_1 = \pi$ and $a_1 = a_4$, which also implies $a_2 = a_3$. If it does not vanish, then by solving the system we can express $\cos\phi_2$ and $\sin\phi_2$ as rational functions of $z_1$ and $z_3$.
\end{proof}

Now that the algebraic structure of $\Qor(a)$ became clear, we can define the complexified configuration space.

\begin{dfn}
The \emph{complexified configuration space} $\QorC(a)$ of oriented quadrilaterals with edge lengths $a$ is the algebraic curve in $(\CP^1)^4$ defined by the six equations of the form \eqref{eqn:OppX} and \eqref{eqn:AdjX}.
\end{dfn}

Due to Proposition \ref{prp:AlgConfSpace}, $\QorC(a)$ is birationally equivalent to the curve \eqref{eqn:AdjX} and is birationally equivalent to \eqref{eqn:OppX} unless $a_1 = a_4$ and $a_2 = a_3$.

\section{Parametrizations of the configuration spaces of oriented quadrilaterals}
\label{sec:Param}
\subsection{Classifying linkages by their degree of degeneracy}
The shape of the configuration space $\QorC(a)$ turns out to depend on the number of solutions of the equation
\begin{equation}
\label{eqn:Grashof}
a_1 \pm a_2 \pm a_3 \pm a_4 = 0
\end{equation}

It is easy to see that if \eqref{eqn:Grashof} has at least two solutions, then $(a_1, a_2, a_3, a_4)$ consists of two pairs of equal numbers. This leads to the following classification of quadrilaterals.

\begin{dfn}
\label{dfn:Types}
A quadrilateral with side lengths $a_1, a_2, a_3, a_4$ (in this cyclic order) is said to be
\begin{itemize}
\item of \emph{elliptic} type, if equation \eqref{eqn:Grashof} has no solution;
\item of \emph{conic} type, if the equation \eqref{eqn:Grashof} has exactly one solution;
\item an \emph{isogram}, if its opposite sides are equal: $a_1 = a_3$, $a_2 = a_4$;
\item a \emph{deltoid}, if two pairs of adjacent sides are equal: $a_1 = a_2, a_3 = a_4$ or $a_1 = a_4, a_2 = a_3$;
\item a \emph{rhombus}, if all sides are equal.
\end{itemize}
\end{dfn}

Let us look at the last three simple cases before we proceed to the more interesting conic and elliptic types.

\subsubsection{The rhombus}
Equation \eqref{eqn:AdjX} becomes $z_1^2 = z_3^2$, and \eqref{eqn:OppX} becomes (in the bihomogeneous form)
\[
w_1w_2(z_1z_2 - w_1w_2) = 0
\]
The other equations can be obtained by cyclically permuting the indices. The configuration space consists of three lines
\[
z_1 = \infty, \quad z_2 = \infty, \quad z_1z_2 = 1
\]
The first two consist of ``folded'' configurations, when two opposite vertices are at the same point, and the edges rotate around this point; the third line corresponds to actual rhombi with $\phi_1 + \phi_2 = \pi$.

Note that the condition \eqref{eqn:NonDeg} is violated, and the system has ``fantom'' solutions that don't correspond to any quadrilateral, e.~g. $z_1 = z_3 = \infty$, $z_2 = z_4$.

\subsubsection{The deltoid}
Assume $a_1 = a_2 \ne a_3 = a_4$. Equation \eqref{eqn:AdjX} becomes
\[
(a_1-a_3)z_1^2 - 2a_3z_1z_2 + (a_1+a_3) = 0
\]
Besides, in the bihomogeneous form the factor $w_2$ appears, so that the configuration space consists of two lines
\[
\QorC(a) = \{z_2 = \infty\} \cup \left\{ z_2 = \frac{(a_1-a_3)z_1^2 + (a_1+a_3)}{2a_3z_1} \right\}
\]
Again, the first line corresponds to a folded deltoid, and the second expresses an angle at a ``peak'' of the deltoid through the angle at its base. The fantom solution  $z_1 = z_3 = \infty$, $z_2 = z_4$ is also present in this case.

\subsubsection{The isogram}
Under assumption $a_1 = a_3 \ne a_2 = a_4$ equation \eqref{eqn:AdjX} becomes
\[
(a_1-a_2)z_1^2z_2^2 + 2a_2 z_1z_2 - (a_1+a_2) = 0
\]
which factorizes as
\[
(z_1z_2 - 1)((a_1-a_2)z_1z_2 + (a_1+a_2)) = 0
\]
The configuration space consists of two lines: $z_2 = z_1^{-1}$, consisting of parallelograms, and $z_2 = \frac{a_1+a_2}{a_1-a_2} z_2^{-1}$, consisting of antiparallelograms.

\subsection{Conic quadrilaterals: parametrization by trigonometric functions}
If equation \eqref{eqn:Grashof} has exactly one solution, then two cases must be distinguished: either the two pairs of opposite sides add up to the same total length, or two pairs of adjacent sides do so.

In what follows we will be repeatedly using the fact that $a+b = c+d$ implies
\begin{equation*}
ac - bd = (a+b)(a-d), \quad cd - ab = (a-c)(a-d)
\end{equation*}

\subsubsection{Circumsribable quadrilaterals: $a_1 + a_3 = a_2 + a_4$}

\begin{prp}
\label{prp:Param1+3=2+4}
Assume that $a_1 + a_3 = a_2 + a_4$ is the unique solution of the equation \eqref{eqn:Grashof} and that $a_3 = a_{\min}$. Then the configuration space $\QorC(a)$ is isomorphic to the one-point compactification of $\C/2\pi\Z$, and a bijection $\C/2\pi\Z \to \QorC(a) \setminus \{\infty\}$ can be established by the parametrization
\begin{gather*}
z_1 = p_1 \sin t, \quad z_2 = p_2 \sin (t + i\sigma), \\
z_3 = p_3 \sin \left(t - \frac{\pi}2\right), \quad
z_4 = p_4 \sin \left(t + \frac{\pi}2 + i\sigma\right)
\end{gather*}
where
$$
p_1 = \sqrt{\frac{(a_1+a_3)(a_1-a_2)}{a_2a_3}} = \sqrt{\frac{a_1a_4}{a_2a_3} - 1}
$$
with $p_2$, $p_3$, $p_4$ obtained by cyclically permuting the indices, and
$$
\sigma = \ln \left( \sqrt{\frac{a_2a_4}{a_2a_4 - a_1a_3}} + \sqrt{\frac{a_1a_3}{a_2a_4 - a_1a_3}} \right)
$$
The square roots of negative numbers are assumed to take value in $i\R_+$.
\end{prp}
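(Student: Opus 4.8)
The plan is to start from the single scalar relation between adjacent angles derived in the proof of Proposition \ref{prp:AdjCoord}, namely
\[
(a_1^2 + a_2^2 - a_3^2 + a_4^2) + 2 a_1 a_4 \cos \phi_1 + 2 a_1 a_2 \cos \phi_2 + 2 a_2 a_4 \cos (\phi_1 + \phi_2) = 0,
\]
and to verify directly that the proposed substitution solves it. More precisely, under the hypothesis $a_1 + a_3 = a_2 + a_4$ the biquadratic \eqref{eqn:AdjX} has coefficients that simplify using the identity $ac - bd = (a+b)(a-d)$ quoted just before the statement; in particular $c_{22}$ acquires a factor $(a_1 - a_2 - a_3 - a_4) = (a_1 + a_3) - (a_2 + a_4) - 2a_3$, wait — more usefully, $c_{22} = (a_1 - a_2 - a_3 - a_4)(a_1 - a_2 + a_3 - a_4)$ and the second factor is $2(a_1 - a_2) = 2(a_4 - a_3)$ under the sum condition, so $c_{22}$ does not vanish unless $a_1 = a_2$ (a deltoid, excluded). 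The uniqueness of the solution to \eqref{eqn:Grashof} guarantees $a_1 \ne a_2$, hence $c_{22} \ne 0$, and \eqref{eqn:AdjX} defines a genuine biquadratic curve.

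The core of the argument is then: substitute $z_1 = p_1 \sin t$, $z_2 = p_2 \sin(t + i\sigma)$ into \eqref{eqn:AdjX}, expand $\sin(t+i\sigma) = \sin t \cos(i\sigma) + \cos t \sin(i\sigma) = \cosh\sigma \sin t + i\sinh\sigma\cos t$, and collect the result as a trigonometric polynomial in $t$. Because $z_1^2$, $z_2^2$, $z_1 z_2$ each have degree $\le 2$ in $\sin t, \cos t$, the whole expression is a linear combination of $1$, $\cos 2t$, $\sin 2t$ (the $\cos^2 t$ and $\sin^2 t$ terms reduce via $\cos^2 = \frac{1+\cos 2t}{2}$). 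So the identity to check reduces to three scalar equations: the constant term vanishes, the coefficient of $\cos 2t$ vanishes, and the coefficient of $\sin 2t$ vanishes. The coefficient of $\sin 2t$ is proportional to $\sinh\sigma$ times something, and will force the stated value of $\sigma$ (or rather, $\cosh\sigma, \sinh\sigma$ are pinned down by two of the three equations and the third is then an identity that must follow from $a_1 + a_3 = a_2 + a_4$). One reads off $p_1^2 = \frac{a_1 a_4}{a_2 a_3} - 1 = \frac{a_1 a_4 - a_2 a_3}{a_2 a_3}$; note $a_1 a_4 - a_2 a_3 = (a_1 - a_3)(a_1 + a_2) \ge 0$ is not obviously the right sign, so one uses instead $a_1 a_4 - a_2 a_3$ together with the assumption $a_3 = a_{\min}$ to decide whether $p_1$ is real or in $i\R_+$ — and similarly for $p_2, p_3, p_4$ and for the quantity $a_2 a_4 - a_1 a_3$ under the square roots defining $\sigma$. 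The identification $\sigma = \ln(\sqrt{\frac{a_2 a_4}{a_2 a_4 - a_1 a_3}} + \sqrt{\frac{a_1 a_3}{a_2 a_4 - a_1 a_3}})$ is equivalent to $\cosh 2\sigma = \frac{a_2 a_4 + a_1 a_3}{a_2 a_4 - a_1 a_3}$ (since $\cosh\sigma = \sqrt{\frac{a_2a_4}{a_2a_4 - a_1a_3}}$, $\sinh\sigma = \sqrt{\frac{a_1a_3}{a_2a_4-a_1a_3}}$, and $e^{\sigma} = \cosh\sigma + \sinh\sigma$), which is the form that will drop out of the computation; rewriting it in the logarithmic form is cosmetic.

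Having verified that the four functions satisfy the adjacent relation \eqref{eqn:AdjX} for the pair $(1,2)$, I would check the pairs $(2,3)$, $(3,4)$, $(4,1)$ either by the same computation with cyclically permuted data — using that the shifts $0, i\sigma, -\frac\pi2, \frac\pi2 + i\sigma$ have consecutive differences $i\sigma, -\frac\pi2 - i\sigma + i\sigma$... more precisely, the differences are $i\sigma, -\frac\pi2 - i\sigma, \frac\pi2, -i\sigma$ — wait, these must pattern-match so that each adjacent pair looks like $(\text{shift}, \text{shift} + (\text{something}))$ with the "something" being $\pm i\sigma$ or $\pm\frac\pi2 \pm i\sigma$, consistent with the two types of adjacent relations (the opposite-angle relation \eqref{eqn:OppX}, which has no $z_1 z_3$ term, forces the $\phi_1, \phi_3$ shifts to differ by the half-period $\frac\pi2$, and indeed $t_1 - t_3 = \frac\pi2$, $t_2 - t_4 = -\frac\pi2$). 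So the verification splits into: (i) the adjacent relations, handled by one computation plus symmetry; (ii) the opposite relations \eqref{eqn:OppX}, which since $\cos\phi_1, \cos\phi_3$ are affinely dependent (Lemma \ref{lem:CosLinDep}) reduce to checking a single linear identity in $\sin^2 t$ after substituting $z_1 = p_1 \sin t$, $z_3 = p_3 \sin(t - \frac\pi2) = -p_3 \cos t$ — this pins down $p_3^2/p_1^2$ in terms of the $a_i$ and must be consistent with the value of $p_3$ obtained by cyclic permutation. Finally, I would argue the map $\C/2\pi\Z \to \QorC(a) \setminus \{\infty\}$ is a bijection: it is a morphism from an elliptic-curve-like object, but here the curve is rational (conic type, arithmetic genus dropped), so $\QorC(a)$ is a nodal or smooth rational curve; the parametrization by $\sin$ of period $2\pi$ shows $\C/2\pi\Z$ maps onto the smooth affine part, and injectivity follows because $t \mapsto (p_1 \sin t, p_2 \sin(t + i\sigma))$ already separates points of $\C/2\pi\Z$ generically (two values of $t$ giving the same $(z_1, z_2)$ would have to satisfy $\sin t = \sin t'$ and $\sin(t + i\sigma) = \sin(t' + i\sigma)$ simultaneously, which forces $t \equiv t' \pmod {2\pi}$ unless $\sigma$ is special), combined with Proposition \ref{prp:AlgConfSpace}.

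The main obstacle I anticipate is purely bookkeeping: ensuring all the sign and branch choices for the square roots $p_i$ and for $\sqrt{a_2 a_4 - a_1 a_3}$ are globally consistent (the convention "square roots of negative numbers take values in $i\R_+$" must be shown to make the four $p_i$ and $\sigma$ satisfy all six relations simultaneously, not just up to sign), and verifying that the single genuine trigonometric identity left after fixing $\sigma$ really is a consequence of $a_1 + a_3 = a_2 + a_4$ and nothing more. The hypothesis $a_3 = a_{\min}$ is presumably what guarantees $a_2 a_4 - a_1 a_3 > 0$ (so that $\sigma$ is real and the lattice is the "rectangular" strip $\C/2\pi\Z$ rather than something tilted), and checking this sign is a small but essential lemma. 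None of the individual steps is deep; the Euler–Chasles / biquadratic structure does all the conceptual work, and what remains is a careful but routine verification.
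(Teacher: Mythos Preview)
Your proposal is correct and will work, but it takes a more laborious route than the paper. The paper does not substitute the proposed parametrization into \eqref{eqn:AdjX} and grind out a trigonometric identity; instead it first rescales $z_i = p_i u_i$ to bring the \emph{opposite}-angle equation \eqref{eqn:OppX} into the normal form $u_1^2 + u_3^2 = 1$, from which $u_1 = \sin t$, $u_3 = \cos t$ is immediate, and then rescales the \emph{adjacent}-angle equation \eqref{eqn:AdjX} into the normal form
\[
u_1^2 + u_2^2 - 2\cos\tau\, u_1 u_2 - \sin^2\tau = 0,
\]
which is visibly the relation between $\sin t$ and $\sin(t+\tau)$; reading off $\cos\tau$ and $\sin^2\tau$ from the rescaled coefficients gives $\tau = i\sigma$ with the stated $\sigma$ in one step. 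The remaining shifts $\pm\frac\pi2$ are then forced by the opposite relations exactly as you say, with the sign ambiguity resolved by inspecting a special configuration.

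So the difference is that you plan a direct verification (substitute, expand, match three coefficients), whereas the paper recognizes the target equations as instances of a standard shifted-sine normal form and reads off the parameters. Your method is self-contained but hides why the parametrization exists; the paper's method explains it and avoids the bookkeeping you correctly flag as the main obstacle (global consistency of the square-root branches across all six equations). If you want to follow your plan, it will go through, but you will find it much cleaner to first divide \eqref{eqn:AdjX} through so that the $z_1^2$ and $z_2^2$ coefficients become $1$ after the substitution $z_i = p_i u_i$---that is exactly the paper's normalization, and at that point the identity you need is just the addition formula for sine.
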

\begin{proof}
Due to $a_1+a_3 = a_2+a_4$ equations \eqref{eqn:OppX} and \eqref{eqn:AdjX} become
\begin{subequations}
\begin{equation}
\label{eqn:OppCon}
-a_2a_3 z_1^2 + a_1a_4 z_3^2 + (a_1a_4 - a_2a_3) = 0
\end{equation}
\begin{equation}
\label{eqn:Conic1}
a_2(a_1-a_4) z_1^2 + a_4(a_1-a_2) z_2^2 - 2a_2a_4 z_1z_2 + a_1(a_1+a_3) = 0
\end{equation}
\end{subequations}
Equation \eqref{eqn:OppCon} rewrites as
\[
\frac{z_1^2}{p_1^2} + \frac{z_3^2}{p_3^2} = 1
\]
with
\[
p_1 = \sqrt{\frac{a_1a_4}{a_2a_3} - 1}, \quad p_3 = \sqrt{\frac{a_2a_3}{a_1a_4} - 1}
\]
so that the coordinates $(z_1, z_3)$ can be parametrized as
\[
z_1 = p_1 \sin t, \quad z_3 = p_3 \cos t
\]


The substitution of $z_1 = p_1 u_1$ and $z_2 = p_2 u_2$ in \eqref{eqn:Conic1} under the assumption $a_1 = a_{\max}$ results in
\[
u_1^2 + u_2^2 - 2 \sqrt{\frac{a_2a_4}{a_2a_4-a_1a_3}} u_1u_2 + \frac{a_1a_3}{a_2a_4 - a_1a_3} = 0
\]
which has the form
\begin{equation}
\label{eqn:SinShift}
u^2 + v^2 - 2 \cos \tau uv - \sin^2 \tau = 0
\end{equation}
with $\tau = i\sigma$ and $\sigma$ as stated in the theorem. On the other hand, $u$ and $v$ in \eqref{eqn:SinShift} can be parametrized as $u(t) = \sin t$ and $v(t) = \sin(t+\tau)$. This leads to
\[
z_1 = p_1 \sin t, \quad z_2 = p_2 \sin(t+i\sigma)
\]
The relation between each pair of opposite angles imply that the above parametrization is completed by
\[
z_3 = p_3 \sin\left(t \pm \frac{\pi}2\right), \quad z_4 = p_4 \sin\left(t+i\sigma \pm \frac{\pi}2\right)
\]
for some choices of the signs. To determine the signs, one may analyze the equations between $z_2$ and $z_3$ and between $z_3$ and $z_4$ in a similar way. Alternatively, one looks at special configurations of quadrilaterals.

\end{proof}

\subsubsection{Edge lengths satisfy $a_1+a_2 = a_3+a_4$}
Equation \eqref{eqn:OppX} relating $z_1$ and $z_3$ takes the same form as in the case of curcumscribable quadrilaterals:
\[
-a_2a_3 z_1^2 + a_1a_4 z_3^2 + (a_1a_4 - a_2a_3) = 0
\]
so that we have again a parametrization
\[
z_1 = p_1 \sin t, \quad z_3 = p_3 \cos t
\]

But for the pair $\phi_2, \phi_4$ we have
\[
(a_3a_4-a_1a_2) z_2^2z_4^2 - a_1a_2 z_2^2 + a_3a_4 z_4^2 = 0
\]
or, equivalently,
\[
-a_3a_4 z_2^{-2} + a_1a_2 z_4^{-2} + (a_1a_2 - a_3a_4) = 0
\]
which leads to
\[
z_2^{-1} = p_2 \sin t, \quad z_4^{-1} = p_4 \cos t
\]
Similarly to the preceding section, we obtain the following (note a sign change for $z_3$).
\begin{prp}
\label{prp:Param1+2=3+4}
Assume that $a_1 + a_2 = a_3 + a_4$ is the unique solution of the equation \eqref{eqn:Grashof} and that $a_3 = a_{\min}$. Then the configuration space $\QorC(a)$ is isomorphic to a one-point compactification of $\C/2\pi\Z$, and the bijection $\C/2\pi\Z \to \QorC(a) \setminus \{\infty\}$ can be established by the parametrization
\begin{gather*}
z_1 = p_1 \sin t, \quad z_2^{-1} = p_2 \sin (t + i\sigma), \\
z_3 = p_3 \sin \left(t + \frac{\pi}2\right), \quad
z_4^{-1} = p_4 \sin \left(t + \frac{\pi}2 + i\sigma\right)
\end{gather*}
where
$$
p_1 = \sqrt{\frac{a_1a_4}{a_2a_3} - 1}
$$
with $p_2$, $p_3$, $p_4$ obtained by cyclically permuting the indices, and
$$
\sigma = \ln \left( \sqrt{\frac{a_2a_4}{a_2a_4 - a_1a_3}} + \sqrt{\frac{a_1a_3}{a_2a_4 - a_1a_3}} \right)
$$
\end{prp}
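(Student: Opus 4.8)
The plan is to run the proof of Proposition~\ref{prp:Param1+3=2+4} again, the only structural change being that $z_2$ and $z_4$ now enter through their reciprocals. The computation preceding the statement already supplies the two halves of the picture: for $a_1+a_2=a_3+a_4$ equation~\eqref{eqn:OppX} for the pair $\phi_1,\phi_3$ has exactly the circumscribable form $z_1^2/p_1^2+z_3^2/p_3^2=1$, so $z_1=p_1\sin t$, $z_3=p_3\cos t$; and \eqref{eqn:OppX} for $\phi_2,\phi_4$, rewritten in reciprocal variables, is $-a_3a_4(z_2^{-1})^2+a_1a_2(z_4^{-1})^2+(a_1a_2-a_3a_4)=0$, of the same shape, so $z_2^{-1}=p_2\sin s$, $z_4^{-1}=p_4\cos s$ for a second parameter $s$ which is a priori unrelated to $t$. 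Here $p_2,p_3,p_4$ are the cyclic shifts of $p_1$; note that $a_3=a_{\min}$ together with the uniqueness of the solution of \eqref{eqn:Grashof} (which excludes $a_2=a_3$) forces $a_4=a_{\max}$, so that $a_2a_4-a_1a_3=(a_1+a_2)(a_4-a_1)>0$ and $\sigma$ is a well-defined real number.

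The crucial step is to synchronize $s$ and $t$ through an adjacent-angle relation. Specializing \eqref{eqn:AdjX} for the pair $\phi_1,\phi_2$ to $a_1+a_2=a_3+a_4$ kills the coefficient $c_{20}$, and applying the product identities $ac-bd=(a+b)(a-d)$, $cd-ab=(a-c)(a-d)$ (valid when $a+b=c+d$) recalled above to the surviving coefficients, one checks that after dividing by $z_2^2$ and substituting $z_1=p_1u$, $z_2^{-1}=p_2v$ the relation reduces to
\[
u^2+v^2-2\cos\tau\,uv-\sin^2\tau=0,\qquad\tau=i\sigma,
\]
with $\sigma$ as in the statement; that is, it becomes equation~\eqref{eqn:SinShift}, verbatim as in the circumscribable case but with $z_2$ replaced by $z_2^{-1}$. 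In the reduction the coefficients of $u^2$ and $v^2$ turn out to be equal, the constant term equals $a_1a_3/(a_2a_4-a_1a_3)$, and the $uv$-coefficient equals $-2\sqrt{a_2a_4/(a_2a_4-a_1a_3)}=-2\cosh\sigma$. Hence $v=\sin(t\pm\tau)$ whenever $u=\sin t$; the normalization $a_3=a_{\min}$ singles out the branch, giving $z_2^{-1}=p_2\sin(t+i\sigma)$. Substituting back into the two relations~\eqref{eqn:OppX} yields $z_3=p_3\sin(t+\tfrac\pi2)$ and $z_4^{-1}=p_4\sin(t+\tfrac\pi2+i\sigma)$ up to signs; the signs---in particular the $+\tfrac\pi2$ in $z_3$, which is opposite to the $-\tfrac\pi2$ of Proposition~\ref{prp:Param1+3=2+4}---are pinned down either by running the same reduction on the adjacent pairs $\phi_2,\phi_3$ and $\phi_3,\phi_4$, or, more cheaply, by matching both sides at one explicit configuration of the quadrilateral.

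It remains to see that $t\mapsto(z_1,z_2,z_3,z_4)$ is a bijection $\C/2\pi\Z\to\QorC(a)\setminus\{\infty\}$ extending to an isomorphism of the one-point compactification onto $\QorC(a)$. By Proposition~\ref{prp:AlgConfSpace}---applicable because the uniqueness of the solution of \eqref{eqn:Grashof} rules out the deltoid and isogram cases---$\QorC(a)$ is birationally the $(z_1,z_2)$-curve~\eqref{eqn:AdjX}, so it suffices to check that $t\mapsto\bigl(p_1\sin t,(p_2\sin(t+i\sigma))^{-1}\bigr)$ is injective on $\C/2\pi\Z$: equality of the first coordinates forces $t'=t$ or $t'=\pi-t$, and in the latter case the second coordinates differ since $\sin(\pi-t+i\sigma)=\sin(t-i\sigma)\ne\sin(t+i\sigma)$. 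As $\Im t\to\pm\infty$ all four coordinates converge to the single point of $(\CP^1)^4$ with $z_1,z_3\to\infty$ and $z_2,z_4\to0$, so the map extends continuously over the added point; that the resulting bijective holomorphic map of rational curves is an isomorphism then follows exactly as in the proof of Proposition~\ref{prp:Param1+3=2+4}. I expect the main obstacle to be bookkeeping rather than conceptual: the delicate parts are the sign determination of the previous paragraph---making the four shifts $0,\,i\sigma,\,\tfrac\pi2,\,\tfrac\pi2+i\sigma$ mutually consistent and confirming that it is $z_2^{-1}$, not $z_2$, that is a scaled sine---and keeping track of the various square-root signs so that the reduced adjacent equation comes out as exactly~\eqref{eqn:SinShift} rather than a sign variant.
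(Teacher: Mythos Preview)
Your proposal is correct and follows essentially the same route as the paper: the paper's proof consists of the short discussion preceding the proposition---observing that the $(z_1,z_3)$ relation is unchanged from the circumscribable case while the $(z_2,z_4)$ relation takes the same shape in the reciprocals---followed by the single sentence ``Similarly to the preceding section, we obtain the following (note a sign change for $z_3$)'', which is exactly the synchronization-via-\eqref{eqn:SinShift} and sign-determination you spell out. You simply supply the details the paper leaves implicit, including the bijectivity argument at the end.
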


\subsubsection{The real part of the configuration space}
In Theorems \ref{prp:Param1+3=2+4} and \ref{prp:Param1+2=3+4} we have $p_1, p_2 \in \R$ and $p_3,p_4 \in i\R$. It follows that the variables $z_i$ take real values if and only if $\Re t \equiv \frac{\pi}2 (\mod \pi)$. The parameter change $t = \frac\pi{2} + iu$ leads to a parametrization that makes the real part of $\QorC$ more tangible.

\begin{cor}
Assume that $a_1 + a_3 = a_2 + a_4$ is the unique solution of the equation \eqref{eqn:Grashof} and that $a_3 = a_{\min}$. Then the configuration space $\QorC$ can be parametrized as
\begin{equation}
\begin{aligned}
z_1 &= p_1 \cosh u\\
z_2 &= p_2 \cosh (u+\sigma)\\
z_3 &= ip_3 \sinh u\\
z_4 &= -ip_4 \sinh (u+\sigma)
\end{aligned}
\end{equation}
where
$$
p_1 = \sqrt{\frac{a_1a_4}{a_2a_3} - 1} \quad \text{etc.}
$$
and
$$
\cosh\sigma = \sqrt{\frac{a_2a_4}{a_2a_4 - a_1a_3}}, \quad \sinh\sigma = \sqrt{\frac{a_1a_3}{a_2a_4 - a_1a_3}}
$$
See Figure \ref{fig:RealConic}.

If $a_1 + a_2 = a_3 + a_4$ is the unique solution of \eqref{eqn:Grashof}, and $a_3 = a_{\min}$, then there is a parametrization
\begin{equation}
\begin{aligned}
z_1 &= p_1 \cosh u\\
z_2^{-1} &= p_2 \cosh(u+\sigma)\\
z_3 &= -ip_3 \sinh u\\
z_4^{-1} &= -ip_4 \sinh(u+\sigma)
\end{aligned}
\end{equation}
with $p_i$ and $\sigma$ computed by the same formulas. See Figure \ref{fig:RealConic2}.
\end{cor}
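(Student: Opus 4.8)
The corollary is a direct change-of-variable consequence of the two preceding propositions, so the plan is to substitute the claimed parameter change $t = \frac{\pi}{2} + iu$ into the parametrizations of Propositions~\ref{prp:Param1+3=2+4} and \ref{prp:Param1+2=3+4} and simplify using the addition formulas for $\sin$. Concretely, $\sin\left(\frac{\pi}{2} + iu\right) = \cos(iu) = \cosh u$, which immediately gives $z_1 = p_1\cosh u$; likewise $z_2 = p_2\sin\left(\frac{\pi}{2} + i\sigma + iu\right) = p_2\cosh(u+\sigma)$ after writing $i\sigma + iu = i(u+\sigma)$. For $z_3$ we compute $\sin\left(\frac{\pi}{2} + iu \pm \frac{\pi}{2}\right)$; one sign choice yields $\sin(\pi + iu) = -\sin(iu) = -i\sinh u$ and the other yields $\sin(iu) = i\sinh u$. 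The proposition fixed the sign (the displayed formula in Proposition~\ref{prp:Param1+3=2+4} writes $z_3 = p_3\sin(t - \frac{\pi}{2})$), so $z_3 = p_3\sin(iu) = ip_3\sinh u$ in the first case, and similarly $z_4 = p_4\sin\left(\frac{\pi}{2} + i\sigma + iu - \frac{\pi}{2}\right)$ picks up the correct sign to give $-ip_4\sinh(u+\sigma)$ — here one must track the sign bookkeeping carefully, as the two cases differ precisely in the sign attached to $z_3$ (the remark "note a sign change for $z_3$" preceding Proposition~\ref{prp:Param1+2=3+4} is exactly this point).

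Next I would translate the formula for $\sigma$ into the hyperbolic identities claimed. Since $\tau = i\sigma$ in \eqref{eqn:SinShift}, we have $\cos\tau = \cosh\sigma$ and the equation $u_1^2 + u_2^2 - 2\sqrt{\frac{a_2a_4}{a_2a_4 - a_1a_3}}\,u_1u_2 + \frac{a_1a_3}{a_2a_4 - a_1a_3} = 0$ identifies $\cosh\sigma = \sqrt{\frac{a_2a_4}{a_2a_4 - a_1a_3}}$ directly, while $\sin^2\tau = -\sinh^2\sigma = -\frac{a_1a_3}{a_2a_4 - a_1a_3}$ gives $\sinh\sigma = \sqrt{\frac{a_1a_3}{a_2a_4 - a_1a_3}}$ (the sign of $\sigma$ is chosen positive, consistent with the logarithm formula, since both radicands are positive under the standing hypotheses $a_2a_4 > a_1a_3$, which holds because $a_3 = a_{\min}$ forces $a_1a_3 < a_2a_4$ in the circumscribable case). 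One should check the consistency relation $\cosh^2\sigma - \sinh^2\sigma = \frac{a_2a_4 - a_1a_3}{a_2a_4 - a_1a_3} = 1$ as a sanity check, and verify that the $\ln$-expression for $\sigma$ is indeed $\operatorname{arcosh}$ of the stated value, i.e.\ $e^\sigma = \cosh\sigma + \sinh\sigma$.

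There is essentially no obstacle here; the only place requiring care is the sign convention for $z_3$ and $z_4$ and making sure $u$ ranges over $\R$ exactly when all $z_i$ are real. Since Propositions~\ref{prp:Param1+3=2+4} and \ref{prp:Param1+2=3+4} establish that $z_i\in\R$ iff $\Re t \equiv \frac{\pi}{2}\pmod\pi$, the substitution $t = \frac{\pi}{2} + iu$ with $u\in\R$ covers the real locus, and I would simply remark that the parametrization is real-analytic and bijective onto the real part of $\QorC(a)$, deferring the detailed topological picture to Figures~\ref{fig:RealConic} and \ref{fig:RealConic2}. The proof is therefore a short computation rather than a structural argument, and can be presented as: "This is the parametrization of Propositions~\ref{prp:Param1+3=2+4} and \ref{prp:Param1+2=3+4} after the substitution $t = \frac{\pi}{2} + iu$, using $\sin(\frac{\pi}{2} + i\theta) = \cosh\theta$, together with the identities $\cosh\sigma = \sqrt{a_2a_4/(a_2a_4 - a_1a_3)}$, $\sinh\sigma = \sqrt{a_1a_3/(a_2a_4 - a_1a_3)}$ equivalent to the stated formula for $\sigma$."
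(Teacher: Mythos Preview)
Your approach is correct and is exactly what the paper does: the corollary is stated without a separate proof, presented simply as the result of the substitution $t = \frac{\pi}{2} + iu$ in Propositions~\ref{prp:Param1+3=2+4} and~\ref{prp:Param1+2=3+4}. (One minor slip: in your $z_4$ computation the shift from Proposition~\ref{prp:Param1+3=2+4} is $+\frac{\pi}{2}$, not $-\frac{\pi}{2}$, and it is $\sin(\pi + i(u+\sigma)) = -i\sinh(u+\sigma)$ that supplies the minus sign.)
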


\begin{figure}[ht]
\centering
\begin{picture}(0,0)%
\includegraphics{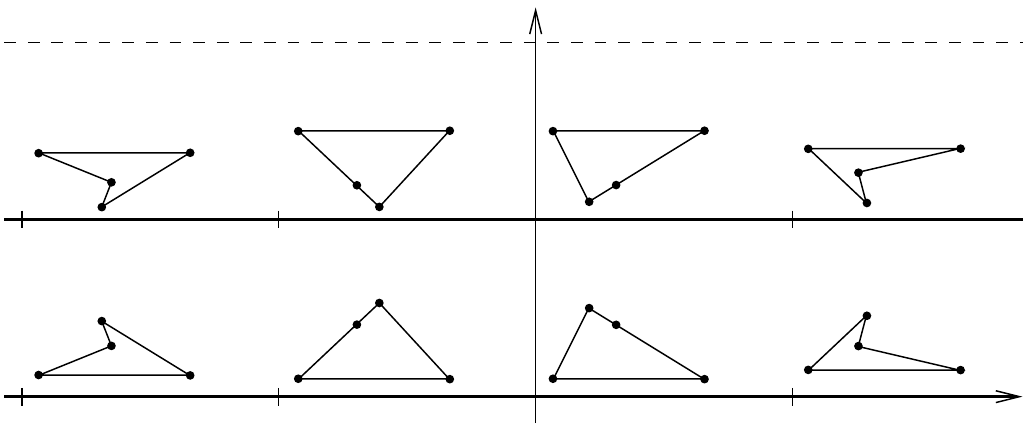}%
\end{picture}%
\setlength{\unitlength}{3729sp}%
\begingroup\makeatletter\ifx\SetFigFont\undefined%
\gdef\SetFigFont#1#2#3#4#5{%
  \reset@font\fontsize{#1}{#2pt}%
  \fontfamily{#3}\fontseries{#4}\fontshape{#5}%
  \selectfont}%
\fi\endgroup%
\begin{picture}(5219,2203)(-21,-1151)
\put(5086,-871){\makebox(0,0)[lb]{\smash{{\SetFigFont{8}{9.6}{\rmdefault}{\mddefault}{\updefault}{\color[rgb]{0,0,0}$\Re$}%
}}}}
\put(2746,-1096){\makebox(0,0)[lb]{\smash{{\SetFigFont{8}{9.6}{\rmdefault}{\mddefault}{\updefault}{\color[rgb]{0,0,0}$0$}%
}}}}
\put(4051,-1051){\makebox(0,0)[lb]{\smash{{\SetFigFont{8}{9.6}{\rmdefault}{\mddefault}{\updefault}{\color[rgb]{0,0,0}$\sigma$}%
}}}}
\put(1441,-1051){\makebox(0,0)[lb]{\smash{{\SetFigFont{8}{9.6}{\rmdefault}{\mddefault}{\updefault}{\color[rgb]{0,0,0}$-\sigma$}%
}}}}
\put(136,-1051){\makebox(0,0)[lb]{\smash{{\SetFigFont{8}{9.6}{\rmdefault}{\mddefault}{\updefault}{\color[rgb]{0,0,0}$-2\sigma$}%
}}}}
\put(2746,-196){\makebox(0,0)[lb]{\smash{{\SetFigFont{8}{9.6}{\rmdefault}{\mddefault}{\updefault}{\color[rgb]{0,0,0}$\pi i$}%
}}}}
\put(4006,-196){\makebox(0,0)[lb]{\smash{{\SetFigFont{8}{9.6}{\rmdefault}{\mddefault}{\updefault}{\color[rgb]{0,0,0}$\sigma+\pi i$}%
}}}}
\put(136,-151){\makebox(0,0)[lb]{\smash{{\SetFigFont{8}{9.6}{\rmdefault}{\mddefault}{\updefault}{\color[rgb]{0,0,0}$-2\sigma + \pi i$}%
}}}}
\put(1441,-151){\makebox(0,0)[lb]{\smash{{\SetFigFont{8}{9.6}{\rmdefault}{\mddefault}{\updefault}{\color[rgb]{0,0,0}$-\sigma + \pi i$}%
}}}}
\put(2746,749){\makebox(0,0)[lb]{\smash{{\SetFigFont{8}{9.6}{\rmdefault}{\mddefault}{\updefault}{\color[rgb]{0,0,0}$2\pi i$}%
}}}}
\put(2476,929){\makebox(0,0)[lb]{\smash{{\SetFigFont{8}{9.6}{\rmdefault}{\mddefault}{\updefault}{\color[rgb]{0,0,0}$\Im$}%
}}}}
\end{picture}%
\caption{The real part of the configuration space in the case $a_1 + a_3 = a_2 + a_4$.}
\label{fig:RealConic}
\end{figure}

\begin{figure}[ht]
\centering
\begin{picture}(0,0)%
\includegraphics{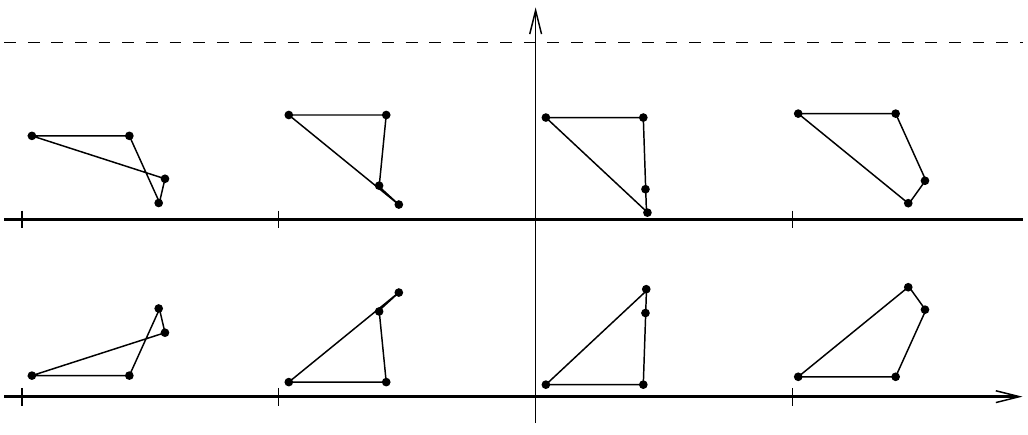}%
\end{picture}%
\setlength{\unitlength}{3729sp}%
\begingroup\makeatletter\ifx\SetFigFont\undefined%
\gdef\SetFigFont#1#2#3#4#5{%
  \reset@font\fontsize{#1}{#2pt}%
  \fontfamily{#3}\fontseries{#4}\fontshape{#5}%
  \selectfont}%
\fi\endgroup%
\begin{picture}(5219,2203)(-21,-1151)
\put(5086,-871){\makebox(0,0)[lb]{\smash{{\SetFigFont{8}{9.6}{\rmdefault}{\mddefault}{\updefault}{\color[rgb]{0,0,0}$\Re$}%
}}}}
\put(2746,-1096){\makebox(0,0)[lb]{\smash{{\SetFigFont{8}{9.6}{\rmdefault}{\mddefault}{\updefault}{\color[rgb]{0,0,0}$0$}%
}}}}
\put(4051,-1051){\makebox(0,0)[lb]{\smash{{\SetFigFont{8}{9.6}{\rmdefault}{\mddefault}{\updefault}{\color[rgb]{0,0,0}$\sigma$}%
}}}}
\put(1441,-1051){\makebox(0,0)[lb]{\smash{{\SetFigFont{8}{9.6}{\rmdefault}{\mddefault}{\updefault}{\color[rgb]{0,0,0}$-\sigma$}%
}}}}
\put(136,-1051){\makebox(0,0)[lb]{\smash{{\SetFigFont{8}{9.6}{\rmdefault}{\mddefault}{\updefault}{\color[rgb]{0,0,0}$-2\sigma$}%
}}}}
\put(2746,-196){\makebox(0,0)[lb]{\smash{{\SetFigFont{8}{9.6}{\rmdefault}{\mddefault}{\updefault}{\color[rgb]{0,0,0}$\pi i$}%
}}}}
\put(4006,-196){\makebox(0,0)[lb]{\smash{{\SetFigFont{8}{9.6}{\rmdefault}{\mddefault}{\updefault}{\color[rgb]{0,0,0}$\sigma+\pi i$}%
}}}}
\put(136,-151){\makebox(0,0)[lb]{\smash{{\SetFigFont{8}{9.6}{\rmdefault}{\mddefault}{\updefault}{\color[rgb]{0,0,0}$-2\sigma + \pi i$}%
}}}}
\put(1441,-151){\makebox(0,0)[lb]{\smash{{\SetFigFont{8}{9.6}{\rmdefault}{\mddefault}{\updefault}{\color[rgb]{0,0,0}$-\sigma + \pi i$}%
}}}}
\put(2746,749){\makebox(0,0)[lb]{\smash{{\SetFigFont{8}{9.6}{\rmdefault}{\mddefault}{\updefault}{\color[rgb]{0,0,0}$2\pi i$}%
}}}}
\put(2476,929){\makebox(0,0)[lb]{\smash{{\SetFigFont{8}{9.6}{\rmdefault}{\mddefault}{\updefault}{\color[rgb]{0,0,0}$\Im$}%
}}}}
\end{picture}%
\caption{The real part of the configuration space in the case $a_1 + a_2 = a_3 + a_4$.}
\label{fig:RealConic2}
\end{figure}

\subsection{Elliptic quadrilaterals: parametrization by elliptic functions}
\label{sec:EllParam}
\subsubsection{Special biquadratic equations and elliptic curves}
\label{sec:Modulus}
A biquadratic polynomial is a polynomial of degree $2$ in each of its two variables. Here we will look at a special class of biquadratic polynomials.
\begin{lem}
Let $m \in \C \setminus \{0, 1\}$. Then the equation
\begin{equation}
\label{eqn:Biquad}
u^2 + v^2 = 1 + mu^2v^2
\end{equation}
defines an elliptic curve.
\end{lem}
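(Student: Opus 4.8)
The plan is to exhibit an explicit birational parametrization of the affine curve \eqref{eqn:Biquad} by elliptic (Jacobi) functions, and then argue that the smooth projective model has genus one. First I would recognize the symmetry: equation \eqref{eqn:Biquad} is invariant under $(u,v)\mapsto(v,u)$ and under $(u,v)\mapsto(-u,-v)$, and it is of the special biquadratic type in which the coefficient of $uv$ vanishes. This is exactly the shape \eqref{eqn:Opp} already discussed in the introduction, where it was asserted that one can parametrize
\[
u = f(t;k), \quad v = f(t+K;k),
\]
with $f=\dn$ when $m>0$ and $f=\cn$ when $m<0$ (and, over $\C$, an appropriate choice in general). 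So the concrete step is to pin down $k$ as a function of $m$ and verify the Jacobi addition-type identity that makes the substitution work. For instance, with $f=\dn(\cdot;k)$ one has $\dn(t+K;k)=\sqrt{1-k^2}/\dn(t;k)$, so $u^2+v^2 = \dn^2 t + (1-k^2)/\dn^2 t$ and $1+mu^2v^2 = 1+m(1-k^2)$; matching these via the identity $\dn^2 t + (1-k^2)\dn^{-2}t = 1 + (1-k^2)$ forces... in fact one sees directly that $u^2+v^2-1-mu^2v^2=0$ becomes, after clearing $\dn^2 t$, a relation that holds identically precisely when $m$ and $k$ are related by $m = 2-k^2 - 2\sqrt{1-k^2}$ or the companion formula (the two roots of $k^2 = $ something quadratic in $m$). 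The point of this step is not the exact bookkeeping but to confirm that for every $m\in\C\setminus\{0,1\}$ there is a complex modulus $k\notin\{0,1\}$ realizing \eqref{eqn:Biquad}, so the curve is covered by $\C/\Lambda$ for the lattice $\Lambda$ of periods of $\dn(\cdot;k)$.

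Next I would check that this parametrization is generically one-to-one, hence birational onto the affine curve. The map $t\mapsto(u,v)$ is clearly not injective because $\dn$ is even and periodic; but on the curve $u$ and $v$ together determine $t$ up to the deck group of the projection to the $u$-line, and one computes that this ambiguity is resolved by also recording $v$. Concretely, the projection of the curve to the $u$-coordinate is a degree-$2$ map (the equation is quadratic in $v$), and the two points over a generic $u$ are swapped by $t\mapsto -t$ (or $t\mapsto 2K-t$, depending on normalization), which also swaps the two values of $v$; so $(u,v)$ is a coordinate on the quotient, which is again an elliptic curve isogenous to $\C/\Lambda$. Thus \eqref{eqn:Biquad} is birationally an elliptic curve. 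An alternative, more algebraic route avoiding Jacobi functions: put the quartic in Weierstrass form directly. The projection $(u,v)\mapsto u$ realizes the curve as a double cover of $\CP^1$ branched over the zeros of the $v$-discriminant $\tfrac14(1-u^2)(1-mu^2)$ after completing the square — this discriminant is $(1-u^2)(1-mu^2)$ up to a constant, a quartic in $u$ with four distinct roots $\pm1,\pm1/\sqrt m$ precisely because $m\neq 0,1$ — so the curve is hyperelliptic of genus $1$, i.e. elliptic, once one checks the points at infinity are smooth (or passes to the smooth projective model). I would present this double-cover argument as the clean proof and mention the Jacobi parametrization as the explicit uniformization to be used later.

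The main obstacle I anticipate is the bookkeeping at infinity and the non-singularity claim. In the naive projectivization in $\CP^2$, a biquadratic curve acquires singular points at infinity, so one must either work in $\CP^1\times\CP^1$ (as the paper does elsewhere, bihomogenizing to $u^2w_2^2 + v^2w_1^2 = w_1^2w_2^2 + m u^2v^2$) and count the points at infinity — there are four, $(\,\pm1:?\,)$-type points coming from the $uv$-leading term — and verify smoothness there, or pass to the normalization and invoke that a smooth plane quartic double cover of $\CP^1$ branched at four distinct points has genus $(4-2)/2 = 1$ by Riemann–Hurwitz. The condition $m\neq 0$ keeps the four branch points $\pm1,\pm 1/\sqrt m$ distinct from each other in the relevant pairing, and $m\neq 1$ keeps $1/\sqrt m\neq 1$; degeneracy $m=1$ would merge branch points (the curve becomes rational, indeed $(u^2-1)(v^2-1)=0$ type), and $m=0$ degenerates to $u^2+v^2=1$, a conic. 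So the heart of the matter is: four distinct branch points $\Leftrightarrow m\notin\{0,1\}$ $\Leftrightarrow$ genus one. The rest is the routine Riemann–Hurwitz count plus a check that no extra branching or singularity hides at infinity, which I would dispatch by working on the smooth model in $\CP^1\times\CP^1$.
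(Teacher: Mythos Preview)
Your second route---view the curve as a double cover of the $u$-line branched at the four roots of $(1-u^2)(1-mu^2)$---is exactly the paper's argument, only the paper states it as a one-line birational substitution: setting $x=u$, $y=v(1-mu^2)$ transforms \eqref{eqn:Biquad} into $y^2=(1-x^2)(1-mx^2)$, and the right-hand side has four distinct roots precisely when $m\notin\{0,1\}$. There is no need to worry separately about points at infinity or Riemann--Hurwitz; the Legendre-type form is already a standard model.

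Your first route, however, contains a genuine error that you should not leave in even as motivation. With $u=\dn(t;k)$ and $v=\dn(t+K;k)=k'/\dn(t;k)$ the product $uv=k'$ is \emph{constant}, so $1+mu^2v^2$ is constant while $u^2+v^2=\dn^2 t + k'^2/\dn^2 t$ is not; hence no choice of $k$ makes $(\dn t,\dn(t+K))$ parametrize \eqref{eqn:Biquad}. The correct uniformization (which the paper gives in the lemma immediately following) uses $\sn$ with $k^2=m$, or $\cn$ with $k^2=m/(m-1)$: for example $u=\sn(t;k)$, $v=\sn(t+K;k)=\cn t/\dn t$ satisfies $u^2+v^2=1+k^2u^2v^2$ identically. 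So drop the $\dn$ discussion and, if you want to mention the elliptic-function parametrization at all, cite the $\sn$/$\cn$ one with the simple modulus relation $m=k^2$ rather than the spurious quadratic you wrote.
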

\begin{proof}
The substitution
\[
x = u,\, y = v(1-mu^2) \Leftrightarrow u = x,\, v = \frac{y}{1-mx^2}
\]
transforms equation \eqref{eqn:Biquad} into
$$
y^2 = (1 - x^2)(1 - m x^2)
$$
If $m \notin \{0,1\}$, then the polynomial on the right hand side has four distinct roots, thus the equation defines an elliptic curve.
\end{proof}

It is possible to transform equation \eqref{eqn:OppX} to the form \eqref{eqn:Biquad} by a substitution $z_1 = p_1 u_1, z_3 = p_3 u_3$. In order to make the formulas more compact we need some handy notations.

Let $a, b, c, d \in \R$. Put
\begin{equation}
\label{eqn:abar}
\begin{aligned}
s := \frac{a+b+c+d}2, \quad \bar a &:= s-a = \frac{-a+b+c+d}2\\
\bar b &:= s-b, \quad \bar c := s-c, \quad \bar d := s-d
\end{aligned}
\end{equation}

We have $a+b = \bar c + \bar d$, $s-a-b = -(s-c-d)$ etc. Other, less obvious identities, are collected in the following lemma.
\begin{lem}
\label{lem:SIdent}
In the above notations we have
\begin{equation*}
\begin{aligned}
ab - \bar c \bar d &= (s-a-c)(s-b-c)\\
ab - \bar a \bar b &= s(s-c-d)\\
abcd - \bar a \bar b \bar c \bar d &= s(s-a-b)(s-b-c)(s-a-c)
\end{aligned}
\end{equation*}
\end{lem}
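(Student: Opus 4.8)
The plan is to treat all three lines as polynomial identities in $a,b,c,d$: once $s=\tfrac12(a+b+c+d)$ and $\bar x=s-x$ are replaced by their definitions, both sides of each line are (homogeneous) polynomials, so everything could in principle be verified by mechanical expansion. I would nonetheless organize the computation around the two trivial consequences of $a+b+c+d=2s$ that recur throughout: first, $a+b-s=s-c-d$, so that $s-c-d$ and $s-a-b$ differ only by a sign; and second, $a+b=\bar c+\bar d=2s-c-d$. With these in hand the second identity is immediate: expanding $\bar a\bar b=(s-a)(s-b)=s^2-s(a+b)+ab$ gives $ab-\bar a\bar b=s(a+b)-s^2=s(a+b-s)=s(s-c-d)$. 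For the first identity I would avoid expanding and instead use that $a,b$ and $\bar c,\bar d$ have equal sums: for any numbers with $a+b=p+q$ one has $ab-pq=\tfrac14\big[(p-q)^2-(a-b)^2\big]=\tfrac14(p-q+a-b)(p-q-a+b)$; substituting $p=\bar c$, $q=\bar d$, so that $p-q=d-c$, the two factors become $(a+d)-(b+c)=2(s-b-c)$ and $(b+d)-(a+c)=2(s-a-c)$ (again using $a+b+c+d=2s$), giving $ab-\bar c\bar d=(s-a-c)(s-b-c)$.

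For the third identity I would not expand the quartic but bootstrap from the first two. Abbreviate $X=ab$, $X'=\bar a\bar b$, $Y=cd$, $Y'=\bar c\bar d$, so $abcd-\bar a\bar b\bar c\bar d=XY-X'Y'$. The second identity, applied to the pair $(a,b)$ and then to $(c,d)$, gives $X-X'=s(s-c-d)=:\rho$ and $Y-Y'=s(s-a-b)=-\rho$; the first identity gives $X-Y'=(s-a-c)(s-b-c)$, and the first identity applied to the barred quadruple $(\bar a,\bar b,\bar c,\bar d)$ — which has the same half-sum $s$ and whose own bar is $(a,b,c,d)$ — gives $X'-Y=\bar a\bar b-cd=(s-a-c)(s-b-c)$ as well. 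Now combine the elementary identity $XY-X'Y'=\tfrac12\big[(X-X')(Y+Y')+(X+X')(Y-Y')\big]=-\tfrac{\rho}{2}\big[(X+X')-(Y+Y')\big]$ with $(X+X')-(Y+Y')=(X-Y')+(X'-Y)=2(s-a-c)(s-b-c)$ to obtain $XY-X'Y'=-\rho\,(s-a-c)(s-b-c)=s(s-a-b)(s-b-c)(s-a-c)$, the last step using $-\rho=-s(s-c-d)=s(s-a-b)$. That is the assertion.

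The step I expect to be most delicate is not any single computation but the bookkeeping: keeping straight which pair among the four variables plays which role, and which quadruple ($a$ or $\bar a$) an already-established identity is being specialized to. If one prefers the third line to be self-contained, an alternative is to note that $abcd-\bar a\bar b\bar c\bar d=f(0)-f(s)$ for $f(t)=(t-a)(t-b)(t-c)(t-d)$ (since $\bar a\bar b\bar c\bar d=f(s)$); this homogeneous quartic vanishes identically on the hyperplane $s=0$, where $\bar x=-x$, hence is divisible by the linear form $s$, and the homogeneous cubic cofactor is then pinned down by comparing a few coefficients with $(s-a-b)(s-b-c)(s-a-c)$.
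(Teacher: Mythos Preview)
Your proof is correct. The paper states this lemma without proof, treating the identities as routine computations; your argument supplies a clean verification, and the bootstrapping of the third identity from the first two via $XY-X'Y'=-\tfrac{\rho}{2}\big[(X+X')-(Y+Y')\big]$ is a nice way to avoid expanding the quartic directly.
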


Now equation \eqref{eqn:OppX} can be rewritten in any of the following two ways:
\begin{subequations}
\begin{equation}
\label{eqn:OppZinS}
(s - a_3 - a_4)(s - a_2 - a_4) z_1^2 z_3^2 - \bar a_1 \bar a_4 z_1^2 + \bar a_2 \bar a_3 z_3^2 + s(s - a_2 - a_3) = 0
\end{equation}
\begin{equation}
\label{eqn:OppZinSa}
(a_2a_3 - \bar a_1\bar a_4) z_1^2 z_3^2 - \bar a_1 \bar a_4 z_1^2 + \bar a_2 \bar a_3 z_3^2 + (a_1a_4 - \bar a_1 \bar a_4) = 0
\end{equation}
\end{subequations}

\begin{prp}
\label{prp:QEll}
The substitution $z_1 = p_1 u$, $z_3 = p_3 v$ with
\begin{equation}
\label{eqn:p1p3}
p_1 = \sqrt{\frac{s(s-a_2-a_3)}{\bar a_1 \bar a_4}} = \sqrt{\frac{a_1a_4}{\bar a_1 \bar a_4} - 1}, \quad p_3 = \sqrt{\frac{s(s-a_1-a_4)}{\bar a_2 \bar a_3}} = \sqrt{\frac{a_2a_3}{\bar a_2 \bar a_3} - 1}
\end{equation}
transforms equation \eqref{eqn:OppX} to \eqref{eqn:Biquad} with
\begin{equation}
\label{eqn:m}
m = -\frac{s(s-a_2-a_3)(s-a_2-a_4)(s-a_3-a_4)}{\bar a_1 \bar a_2 \bar a_3 \bar a_4} = 1 - \frac{a_1a_2a_3a_4}{\bar a_1 \bar a_2 \bar a_3 \bar a_4}
\end{equation}
In particular, if \eqref{eqn:Grashof} has no solutions, then $\QorC(a)$ is an elliptic curve.
\end{prp}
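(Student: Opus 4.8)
The plan is to work from the rewriting \eqref{eqn:OppZinSa} of equation \eqref{eqn:OppX}, substitute $z_1 = p_1 u$ and $z_3 = p_3 v$ with the $p_1,p_3$ of \eqref{eqn:p1p3}, and read off that the result is \eqref{eqn:Biquad}. The key point is that the defining formulas for $p_1,p_3$ can be read as $\bar a_1\bar a_4\,p_1^2 = a_1a_4 - \bar a_1\bar a_4$ and $\bar a_2\bar a_3\,p_3^2 = a_2a_3 - \bar a_2\bar a_3$, and by the second identity of Lemma \ref{lem:SIdent} the first of these equals $s(s-a_2-a_3)$ and the second equals $s(s-a_1-a_4)$. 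After the substitution, the coefficients of $u^2$, of $v^2$ and the constant term of the transformed equation are therefore $-s(s-a_2-a_3)$, $s(s-a_1-a_4)$ and $s(s-a_2-a_3)$ respectively, while the coefficient of $u^2v^2$ is $(a_2a_3-\bar a_1\bar a_4)\,p_1^2p_3^2$. Dividing through by $s(s-a_2-a_3) = a_1a_4-\bar a_1\bar a_4$ and using the tautology $s-a_1-a_4 = -(s-a_2-a_3)$ turns the $v^2$-coefficient into $-1$, so the equation becomes exactly $u^2+v^2 = 1 + m u^2 v^2$ with $m = (a_2a_3-\bar a_1\bar a_4)\,p_1^2p_3^2 \big/ \big( s(s-a_2-a_3) \big)$.

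Next I would bring this $m$ to the two forms in \eqref{eqn:m}. Expanding $p_1^2p_3^2 = s^2(s-a_2-a_3)(s-a_1-a_4)\big/(\bar a_1\bar a_2\bar a_3\bar a_4)$ directly from \eqref{eqn:p1p3}, and using the first identity of Lemma \ref{lem:SIdent} to write $a_2a_3-\bar a_1\bar a_4 = (s-a_1-a_2)(s-a_1-a_3)$, one factor $s(s-a_2-a_3)$ cancels and there remains $m = (s-a_1-a_2)(s-a_1-a_3)\,s(s-a_1-a_4)\big/(\bar a_1\bar a_2\bar a_3\bar a_4)$. Replacing each $s-a_1-a_j$ by $-(s-a_k-a_l)$, with $\{k,l\}$ the complementary pair, gives the first claimed expression for $m$. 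For the second, the third identity of Lemma \ref{lem:SIdent}, after the same three sign changes, reads $s(s-a_2-a_3)(s-a_2-a_4)(s-a_3-a_4) = a_1a_2a_3a_4 - \bar a_1\bar a_2\bar a_3\bar a_4$, which rewrites the first expression for $m$ as $1 - a_1a_2a_3a_4\big/(\bar a_1\bar a_2\bar a_3\bar a_4)$.

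Finally, assume \eqref{eqn:Grashof} has no solution. Then $s\ne 0$ and $\bar a_i\ne 0$ for every $i$, since each equation $\bar a_i = 0$ is an instance of \eqref{eqn:Grashof}; hence $p_1,p_3$ are well-defined, finite and nonzero, taking values in $\R$ or in $i\R$, so $z_1 = p_1u$, $z_3 = p_3v$ is an invertible linear change of variables and \eqref{eqn:Biquad} is just \eqref{eqn:OppX} in new coordinates. Moreover $m\ne 1$ because $a_1a_2a_3a_4>0$, and $m\ne 0$ because $m=0$ would force one of the factors $s-a_i-a_j$ to vanish, again an instance of \eqref{eqn:Grashof}. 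Hence the lemma asserting that \eqref{eqn:Biquad} defines an elliptic curve for $m\notin\{0,1\}$ applies. Since $a_1 = a_4$, $a_2 = a_3$ would give $a_1+a_2-a_3-a_4 = 0$, that case is also excluded, so Proposition \ref{prp:AlgConfSpace} shows $\QorC(a)$ is birationally equivalent to the curve \eqref{eqn:OppX}; being birationally equivalent to an elliptic curve, it is itself an elliptic curve.

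I expect no genuine obstacle beyond bookkeeping: the argument is a chain of substitutions, and the one thing to be careful about is to run every simplification through Lemma \ref{lem:SIdent} and the sign relations $s-a_i-a_j = -(s-a_k-a_l)$ rather than expanding the degree-four polynomials in the $a_i$ by hand, together with the observation that the quantities one divides by — namely $s$, the $\bar a_i$, and the $s-a_i-a_j$ — are exactly those the elliptic-type hypothesis forbids to vanish.
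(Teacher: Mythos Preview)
Your proof is correct and follows essentially the same approach as the paper. The paper packages the substitution more abstractly --- observing that any $Az^2w^2+Bz^2+Cw^2+D=0$ with $D\ne 0$ reduces to \eqref{eqn:Biquad} via $z^2=-\frac{D}{B}u^2$, $w^2=-\frac{D}{C}v^2$ with $m=\frac{AD}{BC}$, and then reads off the coefficients from \eqref{eqn:OppZinS} --- whereas you plug in the specific $p_1,p_3$ and simplify via Lemma~\ref{lem:SIdent}; but these are the same computation. Your explicit invocation of Proposition~\ref{prp:AlgConfSpace} (and the observation that $a_1=a_4,\,a_2=a_3$ is excluded) to pass from the curve \eqref{eqn:OppX} to $\QorC(a)$ is a point the paper leaves tacit. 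One tiny wording slip: when you rewrite the third identity of Lemma~\ref{lem:SIdent} as $s(s-a_2-a_3)(s-a_2-a_4)(s-a_3-a_4)=a_1a_2a_3a_4-\bar a_1\bar a_2\bar a_3\bar a_4$, only two of the three factors on the right-hand side of that identity involve $a_1$, so it is two sign changes, not ``the same three''; the conclusion is unaffected since $(-1)^2=1$.
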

\begin{proof}
An equation of the form
\[
Az^2 w^2 + Bz^2 + Cw^2 + D = 0
\]
with $D \ne 0$ can be brought into the form \eqref{eqn:Biquad} by the substitution
\[
z^2 = -\frac{D}{B} u^2, \quad w^2 = -\frac{D}{C} v^2
\]
This yields $m = \frac{AD}{BC}$. Applying this to the equation \eqref{eqn:OppZinS} and taking into account the third identity of Lemma \ref{lem:SIdent} gives us the value of $m$ as stated in the proposition. Note that $B, C \ne 0$ due to \eqref{eqn:QuadIneqB}, and $D \ne 0$, since otherwise $a_1 + a_4 = a_2 + a_3$.

Further, $m \ne 1$ because $a_i \ne 0$, and $m \ne 0$ because this would result in $s = a_i + a_j$ for some $i, j$.
\end{proof}

\subsubsection{Biquadratic equation and Jacobi elliptic functions}
Jacobi's elliptic functions can be defined for any complex value $k \notin \{0, \pm 1\}$ of the Jacobi modulus, \cite{McKM99, AE06}.
\begin{lem}
\label{lem:EllParam}
Let $m \in \C \setminus \{0,1\}$. Then the curve
\begin{equation}
\label{eqn:SnShiftK}
x^2 + y^2 = m x^2 y^2 + 1
\end{equation}
possesses the parametrization
$$
x(t) = \sn (t;k), \qquad y(t) = \sn(t+K;k) \quad \text{with } k = \sqrt{m}
$$
as well as the parametrization
$$
x(t) = \cn (t;k), \qquad y(t) = \cn(t+K;k) \quad \text{with } k = \sqrt{\frac{m}{m-1}}
$$
\end{lem}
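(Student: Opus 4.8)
The plan is to verify both parametrizations directly, using the classical addition theorems for the Jacobi elliptic functions at the half-period shift $t\mapsto t+K$. The key facts I would invoke are the special-value identities $\sn(t+K;k)=\frac{\cn(t;k)}{\dn(t;k)}$ and $\cn(t+K;k)=-\frac{k'\sn(t;k)}{\dn(t;k)}$, where $k'^2=1-k^2$, together with the Pythagorean relations $\sn^2+\cn^2=1$ and $\dn^2+k^2\sn^2=1$. These are standard and can be quoted from the references already cited (\cite{McKM99, AE06}), so I will not re-derive them.

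First I would treat the $\sn$-parametrization. Set $x=\sn(t;k)$ and $y=\sn(t+K;k)=\cn(t;k)/\dn(t;k)$. Then $x^2+y^2 = \sn^2 + \cn^2/\dn^2$; putting over the common denominator $\dn^2$ and using $\cn^2 = 1-\sn^2$ and $\dn^2 = 1-k^2\sn^2$, the numerator becomes $\sn^2\dn^2 + \cn^2 = \sn^2(1-k^2\sn^2) + (1-\sn^2) = 1 - k^2\sn^4$. Hence $x^2+y^2 = \frac{1-k^2\sn^4}{1-k^2\sn^2}$. On the other hand $mx^2y^2+1 = k^2\sn^2\cdot\frac{\cn^2}{\dn^2}+1 = \frac{k^2\sn^2\cn^2 + \dn^2}{\dn^2} = \frac{k^2\sn^2(1-\sn^2) + (1-k^2\sn^2)}{1-k^2\sn^2} = \frac{1-k^2\sn^4}{1-k^2\sn^2}$, which matches; so \eqref{eqn:SnShiftK} holds with $m=k^2$, i.e.\ $k=\sqrt{m}$. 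That this parametrization is onto (and of the right degree, so that the curve is genuinely elliptic rather than rational) follows because $\sn$ has order $2$ and the map $t\mapsto(\sn t,\sn(t+K))$ is generically $2$-to-$1$ onto the curve — but since the lemma only asserts that the stated formulas \emph{are} a parametrization, the substitution check above suffices.

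Next I would treat the $\cn$-parametrization by the same mechanism. Set $x=\cn(t;k)$, $y=\cn(t+K;k)=-k'\sn(t;k)/\dn(t;k)$, so $x^2=\cn^2=1-\sn^2$ and $y^2 = k'^2\sn^2/\dn^2 = (1-k^2)\sn^2/(1-k^2\sn^2)$. A short computation gives $x^2+y^2 = \frac{(1-\sn^2)(1-k^2\sn^2)+(1-k^2)\sn^2}{1-k^2\sn^2} = \frac{1-k^2\sn^4}{1-k^2\sn^2}$ again, while $m x^2y^2 + 1 = \frac{m(1-\sn^2)(1-k^2)\sn^2}{1-k^2\sn^2}+1$. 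Equating the two and clearing denominators, one needs $m(1-k^2) = -k^2$, i.e.\ $m = \frac{k^2}{k^2-1}$, equivalently $k^2 = \frac{m}{m-1}$, which is exactly the value claimed. Note $k\notin\{0,\pm1\}$ precisely because $m\notin\{0,1\}$, so the Jacobi functions are defined.

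The only genuine obstacle is bookkeeping: one must be careful that the half-period shift identities for $\sn$ and $\cn$ are used with the correct signs, and that the square-root conventions $k=\sqrt m$ resp.\ $k=\sqrt{m/(m-1)}$ are consistent with whatever branch was fixed for the complex Jacobi modulus; but since equation \eqref{eqn:SnShiftK} involves only $x^2$, $y^2$, $x^2y^2$, the sign ambiguity in $y$ is irrelevant and the branch of $k$ enters only through $k^2=m$ resp.\ $k^2=m/(m-1)$, so no real difficulty arises. I would also remark that the two parametrizations are related by the Landen-type / imaginary-modulus transformation swapping $\cn$ and $\sn$, which explains why both appear, and that combined with Proposition \ref{prp:QEll} they will furnish the $\dn$- and $\cn$-parametrizations of $\QorC(a)$ in Theorem \ref{thm:Param} after the further substitution $u=\dn(\cdot)/\text{(const)}$ in \eqref{eqn:Biquad} versus \eqref{eqn:Opp}.
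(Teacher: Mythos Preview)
Your approach is essentially the same as the paper's: both proofs invoke the half-period shift identities $\sn(t+K)=\cn/\dn$ and $\cn(t+K)=-k'\sn/\dn$ and verify \eqref{eqn:SnShiftK} by substitution, with the paper merely stating the identities while you carry out the algebra explicitly.

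One arithmetic slip to fix in the $\cn$ case: your claim that $x^2+y^2$ again equals $\dfrac{1-k^2\sn^4}{1-k^2\sn^2}$ is wrong. Expanding $(1-\sn^2)(1-k^2\sn^2)+(1-k^2)\sn^2$ gives $1-2k^2\sn^2+k^2\sn^4$, not $1-k^2\sn^4$. With the correct numerator the equation to satisfy is $-k^2\sn^2(1-\sn^2)=m(1-k^2)\sn^2(1-\sn^2)$, which indeed yields $m(1-k^2)=-k^2$ as you then state; so your final conclusion $k^2=m/(m-1)$ is right, but the displayed intermediate value of $x^2+y^2$ is not, and as written the two lines are inconsistent.
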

\begin{proof}
The $\sn$-parametrization follows from the identity
\[
\sn(t+K;k) = \frac{\cn(t;k)}{\dn(t;k)} = \pm \sqrt{\frac{1-\sn^2(t;k)}{1-k^2\sn^2(t;k)}}
\]
The $\cn$-parametrization follows from the identity
\[
\cn(t+K;k) = -k' \frac{\sn(t;k)}{\dn(t;k)} = \pm k' \sqrt{\frac{1-\cn^2(t;k)}{(k')^2 + k^2\cn^2(t;k)}}
\]
\end{proof}

\begin{rem}
By a parameter change or by symmetry reasons one can obtain alternative parametrizations of \eqref{eqn:SnShiftK}:
\[
x = \sn t, \quad y = \sn(t-K) \quad \text{and} \quad x = \cn t, \quad y = \cn(t-K)
\]
\end{rem}

The value of $m$ in our case is given by \eqref{eqn:m}, so that $m \in (-\infty, 0) \cup (0,1)$. Let us make a case distinction in order to bring the Jacobi parameter $k$ into its usual range $(0,1)$.

\begin{prp}
Let $a_{\min}$ and $a_{\max}$ denote the minimum, respectively the maximum value taken by $a_1, a_2, a_3, a_4$, and let $a_{\min} + a_{\max} \ne s$.
\begin{enumerate}
\item
If $a_{\min} + a_{\max} < s$, then the complexified configuration space $\QorC(a)$ is isomorphic to the quotient of $\C$ by a rectangular lattice, which is the period lattice of the elliptic sine function with the parameter
\[
k = \sqrt{1 - \frac{a_1a_2a_3a_4}{\bar a_1 \bar a_2 \bar a_3 \bar a_4}} \in (0,1)
\]
The tangents of halves of the opposite angles $\phi_1$ and $\phi_3$ can be parametrized as
\[
z_1 = p_1 \sn(t;k), \quad z_3 = p_3 \sn(t+K;k)
\]
with $p_1$ and $p_3$ as in \eqref{eqn:p1p3}.
\item
If $a_{\min} + a_{\max} > s$, then the complexified configuration space $\QorC(a)$ is isomorphic to the quotient of $C$ by a rhombic lattice, which is the period lattice of the elliptic cosine function with the parameter
\[
k = \sqrt{1 - \frac{\bar a_1 \bar a_2 \bar a_3 \bar a_4}{a_1a_2a_3a_4}} \in (0,1)
\]
The tangents of halves of the opposite angles $\phi_1$ and $\phi_3$ can be parametrized as
\[
z_1 = p_1 \cn(t;k), \quad z_3 = p_3 \cn(t+K;k)
\]
with $p_1$ and $p_3$ as in \eqref{eqn:p1p3}.
\end{enumerate}
\end{prp}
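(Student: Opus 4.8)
The plan is to combine Proposition~\ref{prp:QEll}, which puts equation~\eqref{eqn:OppX} into the normal form~\eqref{eqn:Biquad} with the explicit value of $m$ in~\eqref{eqn:m}, with the parametrization Lemma~\ref{lem:EllParam}. The only real content beyond bookkeeping is deciding which of the two parametrizations (the $\sn$-form or the $\cn$-form) gives a modulus $k$ in the standard interval $(0,1)$, and verifying that the resulting period lattice is rectangular in one case and rhombic in the other. Since by~\eqref{eqn:m} we have $m = 1 - \frac{a_1a_2a_3a_4}{\bar a_1\bar a_2\bar a_3\bar a_4}$, the sign of $m$ is governed by the sign of $\bar a_1\bar a_2\bar a_3\bar a_4 - a_1a_2a_3a_4$, and this in turn is governed by the position of $a_{\min}+a_{\max}$ relative to $s$ — that is the link that must be made explicit.

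First I would record the elementary fact that $a_{\min}+a_{\max}<s$ is equivalent to $a_1a_2a_3a_4 < \bar a_1\bar a_2\bar a_3\bar a_4$, hence to $m\in(0,1)$, while $a_{\min}+a_{\max}>s$ is equivalent to $m<0$. For the first direction, note that $\bar a_i = s-a_i>0$ for all $i$ by~\eqref{eqn:QuadIneqB}, so all eight factors are positive; pairing $a_{\max}\bar a_{\max} = a_{\max}(s-a_{\max})$ against $a_{\min}\bar a_{\min}$ and using that $x\mapsto x(s-x)$ is concave (so it is larger at the more central point) shows $a_{\min}\bar a_{\min}\cdot a_{\max}\bar a_{\max}$ comparisons reduce to the sign of $(s - a_{\min} - a_{\max})$ after clearing the common factors; the two middle sides only strengthen this because replacing $a$ by $\bar a$ permutes them suitably. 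A cleaner route is to use the third identity of Lemma~\ref{lem:SIdent}, $abcd - \bar a\bar b\bar c\bar d = s(s-a-b)(s-b-c)(s-a-c)$: each parenthesized factor is one of the expressions $a_i\pm a_j\pm a_k\pm a_l$ up to sign, and one checks that under $a_{\min}+a_{\max}<s$ an even number of them are negative (equivalently the product is positive), giving $m>0$, while under $a_{\min}+a_{\max}>s$ an odd number are negative, giving $m<0$.

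Granting the sign analysis, the two cases are then immediate. When $m\in(0,1)$, Lemma~\ref{lem:EllParam} (the $\sn$-parametrization) gives $z_1 = p_1\sn(t;k)$, $z_3 = p_3\sn(t+K;k)$ with $k=\sqrt m = \sqrt{1 - \frac{a_1a_2a_3a_4}{\bar a_1\bar a_2\bar a_3\bar a_4}}\in(0,1)$; the period lattice of $\sn(\cdot;k)$ for real $k\in(0,1)$ is generated by $4K$ and $2iK'$, hence rectangular. When $m<0$, the $\sn$-form would give an imaginary modulus, so instead I use the $\cn$-parametrization of Lemma~\ref{lem:EllParam}, which has modulus $\sqrt{\frac{m}{m-1}}$; substituting $m = 1 - \frac{a_1a_2a_3a_4}{\bar a_1\bar a_2\bar a_3\bar a_4}$ gives $\frac{m}{m-1} = 1 - \frac{\bar a_1\bar a_2\bar a_3\bar a_4}{a_1a_2a_3a_4}$, and since now $m<0$ we get $\frac{m}{m-1}\in(0,1)$, so $k = \sqrt{1-\frac{\bar a_1\bar a_2\bar a_3\bar a_4}{a_1a_2a_3a_4}}\in(0,1)$; the period lattice of $\cn(\cdot;k)$ is generated by $4K$ and $2K+2iK'$, which is rhombic. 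In both cases $p_1,p_3$ are as in~\eqref{eqn:p1p3}, and the birational equivalence of Proposition~\ref{prp:AlgConfSpace} (together with Proposition~\ref{prp:QEll}) identifies this curve with $\QorC(a)$.

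The main obstacle is the sign bookkeeping in the second paragraph: one must be careful that $a_{\min}$ and $a_{\max}$ need not be a pair of opposite sides, so the naive pairing in Lemma~\ref{lem:SIdent} has to be applied after possibly relabeling, and one should double-check that the non-degeneracy hypothesis $a_{\min}+a_{\max}\ne s$ is exactly what rules out $m\in\{0,1\}$ — indeed $m=1$ forces some $a_i=0$ and $m=0$ forces $s = a_i + a_j$, i.e. a solution of~\eqref{eqn:Grashof}, both already excluded. Once the equivalence ``$a_{\min}+a_{\max}\lessgtr s\iff m\gtrless 0$'' is nailed down, the rest is a direct appeal to the two lemmas.
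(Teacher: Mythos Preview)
Your approach is exactly the paper's: invoke Proposition~\ref{prp:QEll} and Lemma~\ref{lem:EllParam}, and reduce the case split to the sign of $m$ via the third identity of Lemma~\ref{lem:SIdent}. One bookkeeping slip in your second paragraph: under $a_{\min}+a_{\max}<s$ the product $s(s-a_1-a_2)(s-a_2-a_3)(s-a_1-a_3)$ is \emph{negative}, not positive (for instance with $a_1\le a_2\le a_3\le a_4$ two of the three parentheses are positive while $s-a_2-a_3 = a_{\min}+a_{\max}-s<0$), and it is this negativity that yields $a_1a_2a_3a_4<\bar a_1\bar a_2\bar a_3\bar a_4$ and hence $m>0$; so your ``even/positive'' and ``odd/negative'' should be swapped, though two sign errors cancel and your final conclusion is unaffected.
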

\begin{proof}
This is a direct consequence of Proposition \ref{prp:QEll} and Lemma \ref{lem:EllParam}. The only things to observe are that $a_{\min} + a_{\max} \ne s$ if and only if \eqref{eqn:Grashof} has no solutions, and
\begin{multline*}
a_{\min} + a_{\max} < s \Leftrightarrow s(s-a_1-a_2)(s-a_2-a_3)(s-a_3-a_1) < 0 \\
\Leftrightarrow a_1a_2a_3a_4 < \bar a_1 \bar a_2 \bar a_3 \bar a_4
\end{multline*}
\end{proof}

\subsubsection{Shift parametrizations of more general biquadratic equations}
A general symmetric biquadratic equation can be parametrized by a shift of an elliptic function: $x = f(t), y = f(t+\tau)$. This is essentially due to Euler \cite{Eul68} who used biquadratic equations to integrate the relation
$$
\frac{dx}{\sqrt{P(x)}} + \frac{dy}{\sqrt{P(y)}} = 0, \quad \deg P = 4
$$
Euler's argument in a special case is reproduced in \cite{Mar66}.
The following lemma is a special case that we need in order to parametrize equation \eqref{eqn:AdjX}.

\begin{lem}
\label{lem:SnShift}
The curve
\begin{equation}
\label{eqn:SnShift}
x^2 + y^2 = A x^2 y^2 + 2B xy + C
\end{equation}
possesses the parametrization
$$
x(t) = \sn(t;k), \qquad y(t) = \sn(t+\tau;k)
$$
if $k \in (0,1)$ and $\tau \in \C$ satisfy
\begin{equation}
\label{eqn:SnSubst}A = k^2 \sn^2\tau, \quad B = \cn\tau \dn\tau, \quad C = \sn^2\tau
\end{equation}
and the parametrization
$$
x(t) = \cn(t), \qquad y(t) = \cn(t+\tau)
$$
if $k \in (0,1)$ and $\tau \in \C$ satisfy
\begin{equation}
\label{eqn:CnSubst}
A = -k^2 \frac{\sn^2\tau}{\dn^2\tau}, \quad B = \frac{\cn\tau}{\dn^2\tau}, \quad C = (k')^2 \frac{\sn^2\tau}{\dn^2\tau}
\end{equation}
Here $k' = \sqrt{1-k^2}$ is the conjugate modulus.
\end{lem}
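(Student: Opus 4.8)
The plan is to verify directly that the two proposed pairs of functions satisfy \eqref{eqn:SnShift} identically in $t$, by substituting the addition theorems for the Jacobi functions into the equation and reducing everything to a single polynomial identity in $x = \sn t$ (resp. $x = \cn t$). Throughout I abbreviate $s := \sn\tau$, $c := \cn\tau$, $d := \dn\tau$ and use freely $c^2 = 1-s^2$, $d^2 = 1-k^2s^2$, $\sn^2 t + \cn^2 t = 1$, $\dn^2 t + k^2\sn^2 t = 1$, and $(k')^2 = 1-k^2$.

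For the $\sn$-parametrization, put $x = \sn t$, $y = \sn(t+\tau)$ and apply
\[
\sn(t+\tau) = \frac{\sn t\,\cn\tau\,\dn\tau + \sn\tau\,\cn t\,\dn t}{1 - k^2\sn^2 t\,\sn^2\tau}.
\]
Under the substitution \eqref{eqn:SnSubst} the denominator equals $1 - Ax^2$, so $y(1-Ax^2) = Bx + s\,\cn t\,\dn t$. Feeding $y = (Bx + s\,\cn t\,\dn t)/(1-Ax^2)$ into $y^2(1-Ax^2) - 2Bxy$ and simplifying gives, since $s^2 = C$,
\[
y^2(1-Ax^2) - 2Bxy = \frac{C\,(\cn t\,\dn t)^2 - B^2 x^2}{1-Ax^2}.
\]
Now $(\cn t\,\dn t)^2 = (1-x^2)(1-k^2x^2)$, while \eqref{eqn:SnSubst} forces $k^2 = A/C$ and $B^2 = c^2d^2 = (1-C)(1-A)$. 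Hence the numerator is $(1-x^2)(C-Ax^2) - (1-C)(1-A)x^2$, which expands to $C - (1+AC)x^2 + Ax^4 = (C-x^2)(1-Ax^2)$; therefore $y^2(1-Ax^2) - 2Bxy = C - x^2$, i.e. \eqref{eqn:SnShift} holds.

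For the $\cn$-parametrization the argument runs parallel, starting from
\[
\cn(t+\tau) = \frac{\cn t\,\cn\tau - \sn t\,\dn t\,\sn\tau\,\dn\tau}{1 - k^2\sn^2 t\,\sn^2\tau}.
\]
The preliminary step is to record the algebraic content of \eqref{eqn:CnSubst}: from $A = -k^2 s^2/d^2$ and $d^2 = 1-k^2s^2$ one gets $d^2 = (1-A)^{-1}$, and then $k^2 = A/(A-C)$, $c^2 = (1-C)/(1-A)$, $s^2 = (C-A)/(1-A)$. With these, the denominator equals $d^2(1-Ax^2)$ and one obtains $y(1-Ax^2) = Bx - (s/d)\,\sn t\,\dn t$; proceeding exactly as before reduces $x^2+y^2-Ax^2y^2-2Bxy-C$ to the very same identity $(1-x^2)(C-Ax^2) - (1-C)(1-A)x^2 = (C-x^2)(1-Ax^2)$, now using $(s/d)^2 = C-A$, $(\sn t\,\dn t)^2 = (1-x^2)(C-Ax^2)/(C-A)$, and again $B^2 = c^2/d^4 = (1-C)(1-A)$. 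The computation is elementary; the only points needing care are that no sign ambiguity survives (after the reduction only $(\cn t\,\dn t)^2$, resp. $(\sn t\,\dn t)^2$, appears, so the implicit square-root sign in the addition formula is irrelevant), and that one must translate \eqref{eqn:CnSubst} into the formulas for $k^2, c^2, d^2, s^2$ in terms of $A,C$ before the reduction closes — this bookkeeping is the only mildly fiddly step, and it is precisely what makes the $\cn$-case collapse onto the same polynomial identity as the $\sn$-case.
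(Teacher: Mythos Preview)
Your argument is correct. Both you and the paper start from the addition formula for $\sn$ (resp.\ $\cn$), but the organization differs: the paper squares the rewritten addition law to obtain a relation that is quartic in $x$ and quadratic in $y$, then invokes the $x\leftrightarrow y$ symmetry and antisymmetrizes, dividing out the factor $k^2\sn^2\tau\,(x^2-y^2)$ to land on \eqref{eqn:SnShift}. You instead substitute $y$ directly and compute $y^2(1-Ax^2)-2Bxy$, which sidesteps the symmetrization trick and reduces both cases to the single polynomial identity $(1-x^2)(C-Ax^2)-(1-C)(1-A)x^2=(C-x^2)(1-Ax^2)$. Your route is a bit more economical and makes it transparent why the $\sn$- and $\cn$-cases collapse to the same verification; the paper's route has the minor conceptual advantage that it \emph{derives} the biquadratic rather than merely checking it.
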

\begin{proof}
The addition law
$$
\sn(t+\tau) = \frac{\sn t \cn\tau \dn\tau + \cn t \dn t \sn\tau}{1 - k^2 \sn^2 t \sn^2\tau}
$$
can be rewritten as
$$
(1 - k^2 \sn^2 t \sn^2\tau) \sn(t+\tau) - \cn\tau \dn\tau \sn t = \sqrt{(1 - \sn^2 t)(1 - k^2 \sn^2 t)}
$$
By taking the squares of both parts and substituting $x = \sn t$, $y = \sn(t+\tau)$ we obtain
\begin{multline*}
k^4 \sn^4\tau \ x^4y^2 - k^2 \sn^2\tau \ x^4 - 2k^2 \sn^2\tau \ x^2y^2 + 2k^2 \sn^2\tau \cn\tau \dn\tau \ x^3y \\
+ (1 + k^2 \sn^4\tau) x^2 - 2 \cn\tau \dn\tau \ xy + y^2 - \sn^2 \tau = 0
\end{multline*}
The same equation holds with $x$ and $y$ exchanged. After antisymmetrizing and dividing by $k^2 \sn^2\tau (x^2 - y^2)$ we obtain the equation \eqref{eqn:SnShift}.

In the $\cn$ case the proof is similar.
\end{proof}

A biquadratic equation \eqref{eqn:SnShift} can be viewed as an implicit form of addition formulas for $\sn$ or $\cn$.

\subsubsection{Parametrization theorems}
\label{sec:ParamThm}
We are ready to derive a simultaneous parametrization of all four angles of a quadrilateral with fixed side lengths. With the help of notation \eqref{eqn:abar} equation
\eqref{eqn:AdjX} can be rewritten as
\begin{multline}
\label{eqn:AdjZinS}
\bar a_1 (s - a_1 - a_3) z_1^2 z_2^2 + \bar a_4 (s - a_3 - a_4) z_1^2 + \bar a_2 (s - a_2 - a_3) z_2^2 \\
- 2 a_2 a_4 z_1 z_2 + s \bar a_3 = 0
\end{multline}
Cyclic shifts of indices produce equations for other pairs of adjacent angles.

\begin{prp}
\label{prp:SnEuc}
Let $a = (a_1, a_2, a_3, a_4)$ be such that $a_1 \pm a_2 \pm a_3 \pm a_4 \ne 0$ for all choices of the signs. Define
\[
p_1 = \sqrt{\frac{s(s-a_2-a_3)}{\bar a_1 \bar a_4}} = \sqrt{\frac{a_1 a_4}{\bar a_1 \bar a_4} - 1} \in \R_+ \cup i\R_+
\]
and similarly $p_2$, $p_3$, and $p_4$ through a cyclic shift of indices.
The configuration space $\QorC(a)$ has one of the following parametrizations in terms of the tangents of the half-angles of the quadrilateral.
\begin{enumerate}
\item
If $a_{\min} + a_{\max} < s$ and $a_3 = a_{\min}$, then
\[
z_1 = p_1 \sn t, \quad z_2 = p_2 \sn(t+\tau), \quad z_3 = p_3 \sn(t-K), \quad z_4 = p_4 \sn(t+K+\tau)
\]
where the modulus of $\sn$ is
$$
k = \sqrt{1 - \frac{a_1a_2a_3a_4}{\bar a_1 \bar a_2 \bar a_3 \bar a_4}} \in (0,1),
$$
and the phase shift $\tau$ is determined by
\[
\dn \tau = \sqrt{\frac{a_2a_4}{\bar a_2 \bar a_4}}, \quad \tau \in (0, iK')
\]

\item
If $a_{\min} + a_{\max} > s$ and $a_1 = a_{\max}$, then
\[
z_1 = p_1 \cn t, \quad z_2 = p_2 \cn(t+\tau), \quad z_3 = p_3 \cn(t-K), \quad z_4 = p_4 \cn(t+K+\tau)
\]
where the modulus of $\cn$ is
$$
k = \sqrt{1 - \frac{\bar a_1 \bar a_2 \bar a_3 \bar a_4}{a_1a_2a_3a_4}} \in (0,1),
$$
and the phase shift $\tau$ is determined by
\[
\dn \tau = \sqrt{\frac{\bar a_2 \bar a_4}{a_2a_4}}, \quad \tau \in (0, iK')
\]
\end{enumerate}
\end{prp}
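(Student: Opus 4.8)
The plan is to combine the single biquadratic relation \eqref{eqn:OppX} between the opposite angles $\phi_1,\phi_3$, which we have already parametrized in the previous proposition, with the adjacent relations \eqref{eqn:AdjX} (equivalently \eqref{eqn:AdjZinS}) in order to pin down the parametrization of all four coordinates $z_1,z_2,z_3,z_4$ simultaneously. Concretely, I would first recall from Proposition~\ref{prp:QEll} and the case distinction that follows it that, under $a_{\min}+a_{\max}<s$, the substitution $z_1=p_1u$, $z_3=p_3v$ turns \eqref{eqn:OppX} into \eqref{eqn:Biquad} with $m=k^2$, so by Lemma~\ref{lem:EllParam} we may write $z_1=p_1\sn t$, $z_3=p_3\sn(t\pm K)$; the sign in the shift is a normalization that I will fix at the end by looking at a special configuration (e.g.\ the ``folded'' position or a degenerate flat quadrilateral). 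The genuinely new content is the pair $(z_1,z_2)$: I would scale $z_1=p_1u_1$, $z_2=p_2u_2$ in \eqref{eqn:AdjZinS} and check, using the $s$-identities of Lemma~\ref{lem:SIdent} together with the assumption $a_3=a_{\min}$, that the result has exactly the shape \eqref{eqn:SnShift}, i.e.
\[
u_1^2+u_2^2 = A\,u_1^2u_2^2 + 2B\,u_1u_2 + C
\]
with $A,B,C$ matching \eqref{eqn:SnSubst} for the very $k$ above and some $\tau$. Reading off $A=k^2\sn^2\tau$, $B=\cn\tau\dn\tau$, $C=\sn^2\tau$ and using $\sn^2\tau=(1-\dn^2\tau)/k^2$, a short computation should give $\dn^2\tau=\dfrac{a_2a_4}{\bar a_2\bar a_4}$, and the constraint $0<\dn\tau<1$ (forcing $\tau\in(0,iK')$) will follow from $a_{\min}+a_{\max}<s$, i.e.\ from $a_2a_4<\bar a_2\bar a_4$ when $a_3=a_{\min}$. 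Lemma~\ref{lem:SnShift} then yields $z_1=p_1\sn t$, $z_2=p_2\sn(t+\tau)$ on the nose.

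Next I would propagate this to $z_3$ and $z_4$. Since $z_1=p_1\sn t$ and $z_3=p_3\sn(t-K)$ are both already determined (up to the $\pm K$ sign) by the opposite-angle relation, it only remains to see that the adjacent relation between $\phi_2$ and $\phi_3$, resp.\ between $\phi_3$ and $\phi_4$ (a cyclic shift of \eqref{eqn:AdjZinS}), is consistent with $z_4=p_4\sn(t+K+\tau)$. I would verify this by the same mechanism: the cyclically shifted equation relating $u_3=z_3/p_3$ and $u_4=z_4/p_4$ again has the form \eqref{eqn:SnShift} with the same modulus $k$ (because $k$ is manifestly symmetric under cyclic permutation of the $a_i$, being built from $a_1a_2a_3a_4$ and $\bar a_1\bar a_2\bar a_3\bar a_4$) and with a phase shift equal to $\tau$; since $z_3=p_3\sn(t-K)$, Lemma~\ref{lem:SnShift} forces $z_4=p_4\sn(t-K+\tau)=p_4\sn(t+K+\tau)$ modulo the period $2K$. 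Alternatively, and more cleanly, I would check the equation relating $\phi_1$ and $\phi_4$: it must be satisfied by $z_1=p_1\sn t$, $z_4=p_4\sn(t+K+\tau)$, and since $\sn(t+K+\tau)=\sn((t+\tau)+K)$, this is the ``$K$-shifted'' partner of the $(z_1,z_2)$ relation, so it holds by the $\sn(\cdot+K)$ half-period identity already used in Lemma~\ref{lem:EllParam}. Finally, the amplitudes: $p_i\in\R_+\cup i\R_+$ since each $p_i^2=\dfrac{a_ia_{i+3}}{\bar a_i\bar a_{i+3}}-1$ is real, and the formula $k=\sqrt{1-\frac{a_1a_2a_3a_4}{\bar a_1\bar a_2\bar a_3\bar a_4}}$ lies in $(0,1)$ precisely under $a_{\min}+a_{\max}<s$, as recorded in the proposition preceding this one.

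The case $a_{\min}+a_{\max}>s$ is entirely parallel: now $m<0$, so Lemma~\ref{lem:EllParam} gives the $\cn$-parametrization of \eqref{eqn:OppX} with modulus $k=\sqrt{m/(m-1)}=\sqrt{1-\bar a_1\bar a_2\bar a_3\bar a_4/(a_1a_2a_3a_4)}\in(0,1)$, and the $\cn$-branch of Lemma~\ref{lem:SnShift} with data \eqref{eqn:CnSubst} applies to the scaled adjacent equation; here the normalization $a_1=a_{\max}$ (rather than $a_3=a_{\min}$) is what makes $\dn^2\tau=\bar a_2\bar a_4/(a_2a_4)$ land in $(0,1)$, giving $\tau\in(0,iK')$. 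The half-period identity $\cn(\cdot+K)$ then propagates to $z_3,z_4$ exactly as above.

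I expect the main obstacle to be the bookkeeping that turns the scaled adjacent equation \eqref{eqn:AdjZinS} into the precise normal form \eqref{eqn:SnShift} with coefficients \emph{consistent} with \eqref{eqn:SnSubst}: one must check that the single number $\tau$ extracted from $A$, from $B$, and from $C$ is the same, which is where the nontrivial $s$-identities of Lemma~\ref{lem:SIdent} (and the hypothesis $a_1\pm a_2\pm a_3\pm a_4\neq 0$, guaranteeing no denominator vanishes) are really used — and then that this same $\tau$ also governs the cyclically shifted equations. The sign conventions in the shifts ($t-K$ versus $t+K$, and whether $z_4$ carries $+K+\tau$ or $-K+\tau$) are not forced by the biquadratic relations alone and must be fixed by evaluating the parametrization at one explicit quadrilateral, e.g.\ a collinear (flat) configuration where several $\phi_i\in\{0,\pi\}$, which is the step most prone to sign errors.
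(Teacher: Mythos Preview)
Your plan is essentially the paper's own proof: substitute $z_1=p_1u_1$, $z_2=p_2u_2$ into \eqref{eqn:AdjZinS}, use the identities of Lemma~\ref{lem:SIdent} to reach the normal form \eqref{eqn:SnShift}, match against \eqref{eqn:SnSubst} to extract $\tau$, invoke Lemma~\ref{lem:SnShift}, then propagate to the other pairs of angles (the paper simply says ``a similar check can be made for other pairs of adjacent angles''), and treat the $\cn$-case symmetrically via the second part of Lemma~\ref{lem:SnShift}.

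There is one concrete slip in your range analysis for $\tau$. Under $a_3=a_{\min}$ and $a_{\min}+a_{\max}<s$ you have $a_1+a_3\le a_{\min}+a_{\max}<s$, hence by Lemma~\ref{lem:SIdent}
\[
a_2a_4-\bar a_2\bar a_4 = s(s-a_1-a_3) > 0,
\]
so $a_2a_4>\bar a_2\bar a_4$, not $<$ as you wrote. Consequently $\dn\tau=\sqrt{a_2a_4/\bar a_2\bar a_4}>1$. What places $\tau$ in $(0,iK')$ is that the range of $\dn$ on the purely imaginary segment $(0,iK')$ is $(1,+\infty)$; the range $(k',1)\subset(0,1)$ you invoke is that of $\dn$ on the \emph{real} segment $(0,K)$. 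Your two sign errors happen to cancel, so the conclusion $\tau\in(0,iK')$ is correct, but the justification needs to be fixed. (The same remark applies, with the inequality reversed, in the $\cn$-case under $a_1=a_{\max}$.)
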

\begin{proof}
Substituting $z_1 = p_1 u_1$ and $z_2 = p_2 u_2$ and using the identities from Lemma \ref{lem:SIdent}, we transform \eqref{eqn:AdjZinS} to
\begin{equation}
\label{eqn:SnPar}
u_1^2 + u_2^2 = \frac{\bar a_2 \bar a_4 - a_2 a_4}{\bar a_2 \bar a_4} u_1^2 u_2^2 - 2 \frac{a_2 a_4}{\sqrt{\bar a_2 \bar a_4}\sqrt{a_2a_4 - \bar a_1 \bar a_3}} u_1 u_2 - \frac{\bar a_1 \bar a_3}{a_2 a_4 - \bar a_1 \bar a_3}
\end{equation}
%

The range of $\dn$ on $(0, iK')$ is $(1, +\infty)$. Due to Lemma \ref{lem:SIdent} we have $a_2a_4 > \bar a_2 \bar a_4$, thus there exists $\tau \in (0, iK')$ with $\dn \tau = \sqrt{\frac{a_2a_4}{\bar a_2 \bar a_4}}$. From this we compute
\[
\sn^2 \tau = - \frac{\bar a_1 \bar a_3}{a_2 a_4 - \bar a_1 \bar a_3}, \quad \cn^2 \tau = \frac{a_2a_4}{a_2 a_4 - \bar a_1 \bar a_3}
\]
Besides, $\cn \tau > 0$ for $\tau \in (0, iK')$. It follows that the coefficients of \eqref{eqn:SnPar} coincide with the coefficients of \eqref{eqn:SnShift} for given values of $k$ and $\tau$. A similar check can be made for other pairs of adjacent angles (where one should be careful with extraction of square roots of negative numbers), which finishes the proof of the $sn$-parametrization.

In the case $a_{\min} + a_{\max} > s$ use the second part of Lemma \ref{lem:SnShift}.
\end{proof}

\subsubsection{The real part of the configuration space}
\label{sec:EllReal}
Under the assumptions of Theorem \ref{prp:SnEuc} we have $p_1, p_2 \in \R_+$, $p_3, p_4 \in i\R_+$. It follows that all of the variables $z_i$ take real values if and only if
\begin{align*}
\Re t \equiv K (\mod 2K), &\text{ for the } \sn\text{-parametrization}\\
\Re t \equiv 0 (\mod 2K), &\text{ for the } \cn\text{-parametrization}
\end{align*}
As in the trigonometric case, we make a variable change so that real values of parameter yield real angles $\phi_i$.

In the $\sn$-case put $t = iu + K$, and in the $\cn$-case put $t = iu$. Jacobi's imaginary transformations \cite[Chapter 2]{AE06} allow to rewrite the parametrizations of Theorem \ref{prp:SnEuc} in terms of elliptic functions with the conjugate modulus.

\begin{prp}
\label{prp:RealEll}
\begin{enumerate}
\item
Let $a_{\min} + a_{\max} < s$, and assume that $a_3 = a_{\min}$. Then the configuration space $\QorC$ can be parametrized as
\begin{gather*}
z_1 = \frac{p_1}{\dn(u;k')}, \quad z_2 = \frac{p_2}{\dn(u+\sigma;k')}\\
z_3 = \frac{p_3}{\dn(u+iK;k')} = ip_3 \frac{\sn(u;k')}{\cn(u;k'}\\
z_4 = \frac{p_4}{\dn(u-iK+\sigma;k')} = -ip_4 \frac{\sn(u+\sigma;k')}{\cn(u+\sigma;k')}
\end{gather*}
where
$$
k' = \sqrt{\frac{a_1a_2a_3a_4}{\bar a_1 \bar a_2 \bar a_3 \bar a_4}} \in (0,1),
$$
the amplitudes $p_i$ are given by
\[
p_1 = \sqrt{\frac{s(s-a_2-a_3)}{\bar a_1 \bar a_4}} = \sqrt{\frac{a_1 a_4}{\bar a_1 \bar a_4} - 1}, \quad \text{etc.}
\]
and the phase shift $\sigma$ is determined by
\[
\sn(\sigma;k') = \sqrt{\frac{\bar a_1 \bar a_3}{a_2 a_4}}, \quad \sigma \in (0, K')
\]
\item
Let $a_{\min} + a_{\max} > s$, and assume that $a_1 = a_{\max}$. Then the configuration space $\QorC$ can be parametrized as
\begin{gather*}
z_1 = \frac{p_1}{\cn(u;k')}, \quad z_2 = \frac{p_2}{\cn(u+\sigma;k')}\\
z_3 = \frac{p_3}{\cn(u+iK;k')} = ik'p_3 \frac{\sn(u;k')}{\dn(u;k')}\\
z_4 = \frac{p_4}{\cn(u-iK+\sigma)} = -ik'p_4 \frac{\sn(u+\sigma;k')}{\dn(u+\sigma;k')}
\end{gather*}
where
$$
k' = \sqrt{\frac{\bar a_1 \bar a_2 \bar a_3 \bar a_4}{a_1a_2a_3a_4}} \in (0,1),
$$
the amplitudes $p_i$ are given by
\[
p_1 = \sqrt{\frac{s(s-a_2-a_3)}{\bar a_1 \bar a_4}} = \sqrt{\frac{a_1 a_4}{\bar a_1 \bar a_4} - 1}, \quad \text{etc.}
\]
and the phase shift $\sigma$ is determined by
\[
\sn(\sigma;k') = \sqrt{\frac{a_1 a_3}{\bar a_2 \bar a_4}}, \quad \sigma \in (0, K')
\]
\end{enumerate}
\end{prp}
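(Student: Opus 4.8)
The plan is to substitute into the parametrizations of Proposition~\ref{prp:SnEuc} the values of $t$ that, by the analysis opening Section~\ref{sec:EllReal}, cut out the real locus --- namely $t = iu + K$ in the $\sn$-case and $t = iu$ in the $\cn$-case --- and then rewrite everything via Jacobi's imaginary transformation in terms of the conjugate modulus $k'$. Recall that $p_1, p_2 \in \R_+$ and $p_3, p_4 \in i\R_+$ by Proposition~\ref{prp:SnEuc}; this is exactly why these particular lines $\Re t \equiv K$ (resp. $\Re t \equiv 0$) modulo $2K$ yield real $z_i$, and it is where the factors $i$ in front of $p_3$ and $p_4$ in the stated formulas originate. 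Note that $k' = \sqrt{a_1a_2a_3a_4/(\bar a_1\bar a_2\bar a_3\bar a_4)}$ (resp. its reciprocal) lies in $(0,1)$ precisely under the Grashof/anti-Grashof hypothesis, so all the formulas below make sense.

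The three tools I would invoke are: Jacobi's imaginary transformation $\sn(iu;k) = i\,\sn(u;k')/\cn(u;k')$, $\cn(iu;k) = 1/\cn(u;k')$, $\dn(iu;k) = \dn(u;k')/\cn(u;k')$; the quarter-period translation formulas (shift by the real quarter-period $K$, and by the imaginary one, e.g.\ $\dn(w+iK;k') = -i\cn(w;k')/\sn(w;k')$ and $\dn(w-iK;k') = i\cn(w;k')/\sn(w;k')$); and $\sn(w+2K) = -\sn w$. Applying them in the $\sn$-case gives $z_1 = p_1\sn(iu+K;k) = p_1\cn(iu;k)/\dn(iu;k) = p_1/\dn(u;k')$; since $\tau \in (0,iK')$ is purely imaginary, write $\tau = i\sigma$ with $\sigma \in (0,K')$, so $z_2 = p_2\sn(i(u+\sigma)+K;k) = p_2/\dn(u+\sigma;k')$; the shift by $K$ cancels in $z_3 = p_3\sn(iu;k) = ip_3\sn(u;k')/\cn(u;k')$, which the quarter-period formula also presents as $p_3/\dn(u+iK;k')$; and the extra $\sn(w+2K)=-\sn w$ in $z_4 = p_4\sn(i(u+\sigma)+2K;k)$ yields $z_4 = -ip_4\sn(u+\sigma;k')/\cn(u+\sigma;k') = p_4/\dn(u-iK+\sigma;k')$. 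Finally, to convert the condition on $\tau$: squaring $\dn(i\sigma;k) = \dn(\sigma;k')/\cn(\sigma;k') = \sqrt{a_2a_4/\bar a_2\bar a_4}$, clearing denominators via $\dn^2(\sigma;k') = 1-k'^2\sn^2(\sigma;k')$ and $\cn^2(\sigma;k') = 1-\sn^2(\sigma;k')$, substituting $k'^2 = a_1a_2a_3a_4/(\bar a_1\bar a_2\bar a_3\bar a_4)$, and using Lemma~\ref{lem:SIdent} in the shapes $a_2a_4 - \bar a_2\bar a_4 = s(s-a_1-a_3)$ and $\bar a_1\bar a_3 - a_1a_3 = s(s-a_1-a_3)$, everything collapses to $\sn^2(\sigma;k') = \bar a_1\bar a_3/a_2a_4$. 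The $\cn$-case is identical in structure, using $\cn(iu;k) = 1/\cn(u;k')$ and $\cn(w-K;k) = k'\sn(w;k)/\dn(w;k)$ in place of their $\sn$ analogues, with $k' = \sqrt{\bar a_1\bar a_2\bar a_3\bar a_4/(a_1a_2a_3a_4)}$ and $\sn^2(\sigma;k') = a_1a_3/\bar a_2\bar a_4$ emerging the same way.

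All steps are mechanical applications of standard identities, so I expect no genuine obstacle. The only points needing care are the branch choices for the square roots defining the $p_i$, so that the two displayed expressions for $z_3$ and for $z_4$ agree in sign --- but the quarter-period formulas above pin these down completely --- and the verification, already carried out in Section~\ref{sec:EllReal}, that the chosen lines in the $t$-plane are the full real locus rather than a proper subset of it.
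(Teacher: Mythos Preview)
Your proposal is correct and follows exactly the approach the paper itself indicates: substitute $t = iu + K$ (resp.\ $t = iu$) into Proposition~\ref{prp:SnEuc} and rewrite via Jacobi's imaginary transformation in the conjugate modulus. In fact you supply considerably more detail than the paper, which states only the substitution and cites the imaginary transformation; your explicit handling of the quarter-period shifts for $z_3$, $z_4$ and the reduction of the $\dn(\tau;k)$ condition to $\sn^2(\sigma;k') = \bar a_1\bar a_3/(a_2a_4)$ via Lemma~\ref{lem:SIdent} is exactly what is needed and is left implicit there.
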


\begin{figure}[ht]
\centering
\begin{picture}(0,0)%
\includegraphics{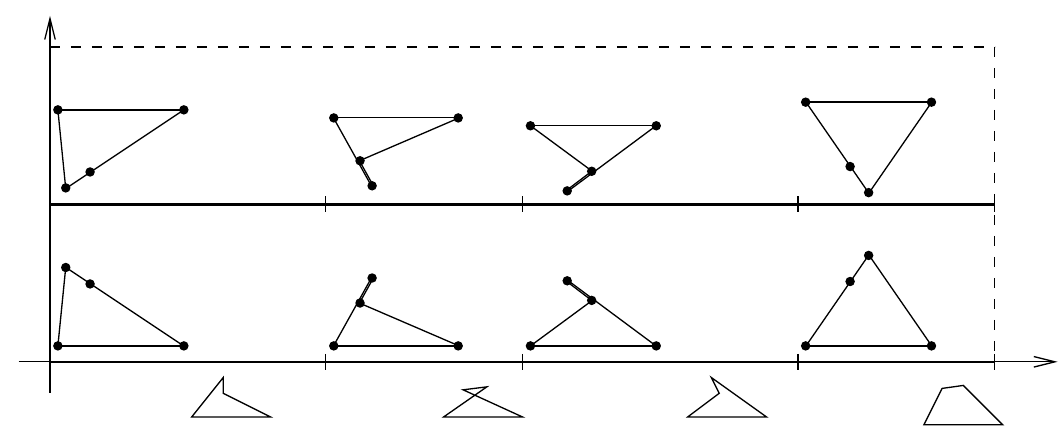}%
\end{picture}%
\setlength{\unitlength}{3315sp}%
\begingroup\makeatletter\ifx\SetFigFont\undefined%
\gdef\SetFigFont#1#2#3#4#5{%
  \reset@font\fontsize{#1}{#2pt}%
  \fontfamily{#3}\fontseries{#4}\fontshape{#5}%
  \selectfont}%
\fi\endgroup%
\begin{picture}(6057,2430)(-284,-1333)
\put( 46,974){\makebox(0,0)[lb]{\smash{{\SetFigFont{7}{8.4}{\rmdefault}{\mddefault}{\updefault}{\color[rgb]{0,0,0}$\Im$}%
}}}}
\put(1576,-1096){\makebox(0,0)[lb]{\smash{{\SetFigFont{7}{8.4}{\rmdefault}{\mddefault}{\updefault}{\color[rgb]{0,0,0}$K'-\sigma$}%
}}}}
\put(5671,-826){\makebox(0,0)[lb]{\smash{{\SetFigFont{7}{8.4}{\rmdefault}{\mddefault}{\updefault}{\color[rgb]{0,0,0}$\Re$}%
}}}}
\put(-269,794){\makebox(0,0)[lb]{\smash{{\SetFigFont{7}{8.4}{\rmdefault}{\mddefault}{\updefault}{\color[rgb]{0,0,0}$4iK$}%
}}}}
\put(-269,-106){\makebox(0,0)[lb]{\smash{{\SetFigFont{7}{8.4}{\rmdefault}{\mddefault}{\updefault}{\color[rgb]{0,0,0}$2iK$}%
}}}}
\put( 46,-1096){\makebox(0,0)[lb]{\smash{{\SetFigFont{7}{8.4}{\rmdefault}{\mddefault}{\updefault}{\color[rgb]{0,0,0}$0$}%
}}}}
\put(2701,-1096){\makebox(0,0)[lb]{\smash{{\SetFigFont{7}{8.4}{\rmdefault}{\mddefault}{\updefault}{\color[rgb]{0,0,0}$K'$}%
}}}}
\put(4276,-1096){\makebox(0,0)[lb]{\smash{{\SetFigFont{7}{8.4}{\rmdefault}{\mddefault}{\updefault}{\color[rgb]{0,0,0}$2K'-\sigma$}%
}}}}
\put(5401,-1096){\makebox(0,0)[lb]{\smash{{\SetFigFont{7}{8.4}{\rmdefault}{\mddefault}{\updefault}{\color[rgb]{0,0,0}$2K'$}%
}}}}
\end{picture}%
\caption{The real part of the configuration space in the case $a_{\min} + a_{\max} < s$.}
\label{fig:RealSn}
\end{figure}

\begin{figure}[ht]
\centering
\begin{picture}(0,0)%
\includegraphics{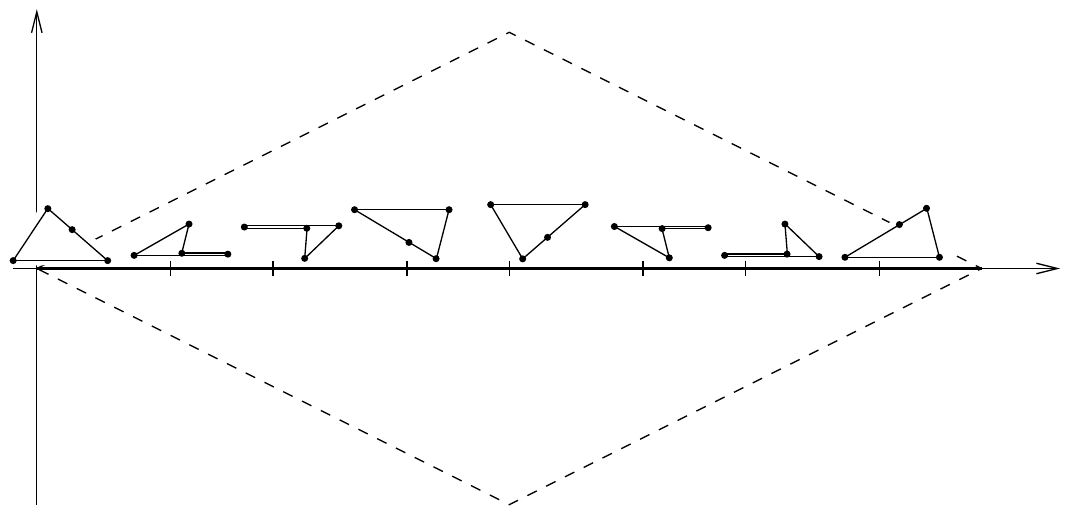}%
\end{picture}%
\setlength{\unitlength}{3315sp}%
\begingroup\makeatletter\ifx\SetFigFont\undefined%
\gdef\SetFigFont#1#2#3#4#5{%
  \reset@font\fontsize{#1}{#2pt}%
  \fontfamily{#3}\fontseries{#4}\fontshape{#5}%
  \selectfont}%
\fi\endgroup%
\begin{picture}(6073,2880)(-210,-1873)
\put(-89,-646){\makebox(0,0)[lb]{\smash{{\SetFigFont{7}{8.4}{\rmdefault}{\mddefault}{\updefault}{\color[rgb]{0,0,0}$0$}%
}}}}
\put(5356,-646){\makebox(0,0)[lb]{\smash{{\SetFigFont{7}{8.4}{\rmdefault}{\mddefault}{\updefault}{\color[rgb]{0,0,0}$4K'$}%
}}}}
\put(541,-646){\makebox(0,0)[lb]{\smash{{\SetFigFont{7}{8.4}{\rmdefault}{\mddefault}{\updefault}{\color[rgb]{0,0,0}$K'-\sigma$}%
}}}}
\put(1351,-646){\makebox(0,0)[lb]{\smash{{\SetFigFont{7}{8.4}{\rmdefault}{\mddefault}{\updefault}{\color[rgb]{0,0,0}$K'$}%
}}}}
\put(1846,-646){\makebox(0,0)[lb]{\smash{{\SetFigFont{7}{8.4}{\rmdefault}{\mddefault}{\updefault}{\color[rgb]{0,0,0}$2K'-\sigma$}%
}}}}
\put(2611,-646){\makebox(0,0)[lb]{\smash{{\SetFigFont{7}{8.4}{\rmdefault}{\mddefault}{\updefault}{\color[rgb]{0,0,0}$2K'$}%
}}}}
\put(4546,-646){\makebox(0,0)[lb]{\smash{{\SetFigFont{7}{8.4}{\rmdefault}{\mddefault}{\updefault}{\color[rgb]{0,0,0}$4K' - \sigma$}%
}}}}
\put(3196,-646){\makebox(0,0)[lb]{\smash{{\SetFigFont{7}{8.4}{\rmdefault}{\mddefault}{\updefault}{\color[rgb]{0,0,0}$3K'-\sigma$}%
}}}}
\put(3961,-646){\makebox(0,0)[lb]{\smash{{\SetFigFont{7}{8.4}{\rmdefault}{\mddefault}{\updefault}{\color[rgb]{0,0,0}$3K'$}%
}}}}
\put(5761,-466){\makebox(0,0)[lb]{\smash{{\SetFigFont{7}{8.4}{\rmdefault}{\mddefault}{\updefault}{\color[rgb]{0,0,0}$\Re$}%
}}}}
\put( 46,884){\makebox(0,0)[lb]{\smash{{\SetFigFont{7}{8.4}{\rmdefault}{\mddefault}{\updefault}{\color[rgb]{0,0,0}$\Im$}%
}}}}
\put(2746,839){\makebox(0,0)[lb]{\smash{{\SetFigFont{7}{8.4}{\rmdefault}{\mddefault}{\updefault}{\color[rgb]{0,0,0}$2K'+2iK$}%
}}}}
\end{picture}%
\caption{The real part of the configuration space in the case $a_{\min} + a_{\max} > s$.}
\label{fig:RealCn}
\end{figure}

The shapes of quadrilaterals corresponding to special values of parameter $t$ are shown on Figures \ref{fig:RealSn} and \ref{fig:RealCn}. Note that the real part consists of one component (a quadrilateral can be transformed to its mirror image by a continuous deformation) if $a_{\min} + a_{\max} > s$, and of two components (no continuous deformation between a quadrilateral and its mirror image) if $a_{\min} + a_{\max} < s$.

Since $z_i^{-1} = \cot\frac{\phi_i}2$, Proposition \ref{prp:RealEll} implies Theorem \ref{thm:Param}.

\section{The configuration spaces of non-oriented quadrilaterals}
\label{sec:NonOr}
\subsection{Double covers}
\label{sec:DoubCov}
For $a = (a_1, a_2, a_3, a_4)$ satisfying the conditions \eqref{eqn:QuadIneqA} and \eqref{eqn:QuadIneqB}, let $Q(a)$ denote the space of congruence classes of quadrilaterals in $\R^2$ with side lengths $a$ in this cyclic order. This time, orientation-reversing congruences are allowed, so that to almost every element of $Q(a)$ there correspond two elements of $\Qor(a)$. Exceptions are quadrilaterals with collinear vertices.

If $a$ is elliptic, that is $a_1 \pm a_2 \pm a_3 \pm a_4 \ne 0$ for every choice of signs, then there are no exceptional quadrilaterals and the natural map $\Qor(a) \to Q(a)$ is a double cover (disconnencted for non-Grashof quadrilaterals). The equivalence relation on $\Qor(a)$ looks very simple in terms of the angles of a quadrilateral:
\[
(\phi_1, \phi_2, \phi_3, \phi_4) \sim (-\phi_1, -\phi_2, -\phi_3, -\phi_4)
\]
and has the same form in the variables $z_i = \tan \frac{\phi}2$. By extending this to $\QorC(a)$ we define the complexified configuration space of non-oriented quadrilaterals
\[
Q^{\C}(a) := \QorC(a)/(z_1, z_2, z_3, z_4) \sim (-z_1, -z_2, -z_3, -z_4)
\]
The quotient map
\[
\QorC(a) \to Q^{\C}(a)
\]
is a double cover between elliptic curves.
In terms of the holomorphic parameter $t$ of Proposition \ref{prp:SnEuc} we have $t \sim t+2K$ both in the $\sn$ and in the $\cn$ case. This identifies $Q(a)$ with $\C/2K\Z + 2K'iZ$ and proves the first part of Theorem \ref{thm:NonOr}.

\subsection{Diagonal lengths as coordinates on the configuration space}
\label{sec:DiagLengths}
The cosines $\cos \phi_i$ are meromorphic functions on the elliptic curve $Q^{\C}(a)$. On the other hand, by the cosine law they are linearly related to the squares of diagonal lengths. Since a quadrilateral with fixed edge lengths is uniquely determined up to congruence by the lengths of its diagonals, this provides a natural embedding of $Q^{\C}(a)$ in $(\CP^1)^2$.

\begin{lem}
Let $x$ and $y$ be the lengths of diagonals ($x$ is separating $a_1$ and $a_2$ from $a_3$ and $a_4$), and
\[
u = x^2, \quad v = y^2
\]
Then the configuration space $Q(a)$ is an algebraic curve given by the equation
\begin{equation}
\label{eqn:DiagCoord}
u^2 v + u v^2 + 2d_{11}uv + d_{10}u + d_{01}v + d_{00} = 0, \quad \text{where}
\end{equation}
\begin{equation*}
\begin{aligned}
d_{11} &= -\frac12(a_1^2 + a_2^2 + a_3^2 + a_4^2)\\
d_{10} &= (a_1^2 - a_4^2)(a_2^2 - a_3^2)\\
d_{01} &= (a_1^2 - a_2^2)(a_4^2 - a_3^2)\\
d_{00} &= (a_1^2 - a_2^2 + a_3^2 - a_4^2)(a_1^2 a_3^2 - a_2^2 a_4^2)
\end{aligned}
\end{equation*}
\end{lem}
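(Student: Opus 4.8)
The plan is to eliminate the angles from the cosine law, using the relation between adjacent angles established in Section~\ref{sec:Eq}. By the cosine law in the two triangles into which the diagonal $AC$ cuts the quadrilateral, its squared length satisfies $u = a_1^2 + a_2^2 + 2a_1a_2\cos\phi_2 = a_3^2 + a_4^2 + 2a_3a_4\cos\phi_4$; similarly the diagonal $BD$ gives $v = a_1^2 + a_4^2 + 2a_1a_4\cos\phi_1 = a_2^2 + a_3^2 + 2a_2a_3\cos\phi_3$, which is Lemma~\ref{lem:CosLinDep} up to additive constants. In particular $\cos\phi_1$ is an affine function of $v$ and $\cos\phi_2$ is an affine function of $u$, with coefficients depending only on the side lengths.

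To link the two diagonals I would use the ``closing-up'' identity $\|v_4+v_1+v_2\|^2 = \|v_3\|^2 = a_3^2$ from the proof of Proposition~\ref{prp:AdjCoord}, that is $(a_1^2+a_2^2-a_3^2+a_4^2) + 2a_1a_4\cos\phi_1 + 2a_1a_2\cos\phi_2 + 2a_2a_4\cos(\phi_1+\phi_2) = 0$. Substituting the two cosine-law expressions above collapses this to $2a_2a_4\cos(\phi_1+\phi_2) = a_1^2 + a_3^2 - u - v$, so $\cos(\phi_1+\phi_2)$ is affine in $u$ and $v$ as well. Now impose the elementary identity $\bigl(\cos\phi_1\cos\phi_2 - \cos(\phi_1+\phi_2)\bigr)^2 = (1-\cos^2\phi_1)(1-\cos^2\phi_2)$ --- which merely says $(\sin\phi_1\sin\phi_2)^2 = \sin^2\phi_1\sin^2\phi_2$ --- and substitute the three affine expressions. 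This yields a polynomial relation between $u$ and $v$; it is a genuine constraint, not an identity, precisely because the expression for $\cos(\phi_1+\phi_2)$ already encodes the closing-up condition.

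The key point is a degree count: on both sides the top-order term in $\cos\phi_1,\cos\phi_2$ is $\cos^2\phi_1\cos^2\phi_2$ with the same coefficient, so these cancel; the $u^2v^2$ term disappears and what is left is biquadratic of total degree $3$ --- exactly the shape \eqref{eqn:DiagCoord} after dividing by the (constant) coefficient of $u^2v$. It then only remains to expand and read off $d_{11},d_{10},d_{01},d_{00}$; the factorizations of $d_{10}$ and $d_{01}$ come out of $p^2-q^2 = (p-q)(p+q)$, and the invariance of \eqref{eqn:DiagCoord} under the relabelling $(a_1,a_2,a_3,a_4)\mapsto(a_2,a_3,a_4,a_1)$, $u\leftrightarrow v$ (induced by the rotation $A\mapsto B\mapsto C\mapsto D$ of the quadrilateral) gives a convenient check on the bookkeeping. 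I expect this expansion to be the only laborious step; it presents no conceptual obstacle. Since a quadrilateral with prescribed side lengths is determined up to congruence by its pair of diagonal lengths, the map sending a congruence class to $(u,v)$ is injective, so $Q(a)$ --- and, after complexification, $Q^{\C}(a)$ --- is identified with the curve \eqref{eqn:DiagCoord}.

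A slicker way to see that such a relation must exist is via the vanishing $5\times5$ Cayley--Menger determinant of the four coplanar points $A,B,C,D$: substituting the six squared distances $a_1^2,a_2^2,a_3^2,a_4^2,u,v$ and expanding produces a polynomial in $u,v$ whose $u^2v^2$ coefficient vanishes automatically --- such a term would be forced to contain the zero entry on the main diagonal of the bordered matrix --- and which, after normalization, must coincide with \eqref{eqn:DiagCoord}.
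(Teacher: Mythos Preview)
Your approach is correct and genuinely different from the paper's. The paper proceeds in one stroke: since the four vertices are coplanar, the three vectors $\overrightarrow{AB}$, $\overrightarrow{AC}$, $\overrightarrow{AD}$ issuing from $A$ lie in a plane, so their $3\times 3$ Gram matrix
\[
\begin{pmatrix}
a_1^2 & \frac{a_1^2 + u - a_2^2}2 & \frac{a_1^2 + a_4^2 - v}2\\[2pt]
\frac{a_1^2 + u - a_2^2}2 & u & \frac{u + a_4^2 - a_3^2}2\\[2pt]
\frac{a_1^2 + a_4^2 - v}2 & \frac{u + a_4^2 - a_3^2}2 & a_4^2
\end{pmatrix}
\]
is singular; expanding the determinant gives \eqref{eqn:DiagCoord} directly. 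Your Cayley--Menger remark at the end is essentially this observation in bordered form, and your argument for the vanishing of the $u^2v^2$ coefficient (the only contributing permutation hits the zero in the $(1,1)$ corner) is exactly right.

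Your primary route --- expressing $\cos\phi_1$, $\cos\phi_2$, $\cos(\phi_1+\phi_2)$ affinely in $u,v$ via the cosine law and the closing-up identity of Proposition~\ref{prp:AdjCoord}, then eliminating the angles through $(\cos\phi_1\cos\phi_2-\cos(\phi_1+\phi_2))^2=(1-\cos^2\phi_1)(1-\cos^2\phi_2)$ --- also works, and your degree count for the cancellation of $u^2v^2$ is correct (both sides contribute $u^2v^2/(16a_1^4a_2^2a_4^2)$). What this buys you is a tighter connection to the angle coordinates of Section~\ref{sec:Eq}; what the Gram-determinant route buys is that no trigonometry survives at all and the symmetry of the leading coefficients ($u^2v$ and $uv^2$ both appear with the same constant) is visible without computation, since $v$ occurs only in one symmetric pair of off-diagonal entries. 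In your approach that equality, and hence the normalization to coefficient $1$, has to emerge from the expansion rather than being structurally evident.
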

\begin{proof}
Viewing a planar quadrilateral as a tetrahedron of zero volume, we equate to zero the Gram determinant of the vectors (two sides and a diagonal) issued from one of its vertices.
\[
\det
\begin{pmatrix}
a_1^2 & \frac{a_1^2 + x^2 - a_2^2}2 & \frac{a_1^2 + a_4^2 - y^2}2\\
\frac{a_1^2 + x^2 - a_2^2}2 & x^2 & \frac{x^2 + a_4^2 - a_3^2}2\\
\frac{a_1^2 + a_4^2 - y^2}2 & \frac{x^2 + a_4^2 - a_3^2}2 & a_4^2
\end{pmatrix}
= 0
\]
A simple computation leads to the equation in the lemma.
\end{proof}

\subsection{Conjugate quadrilaterals}
Recall the following notations.
\begin{dfn}
For $a \in \R^4$ the \emph{conjugate quadruple} $\bar a \in \R^4$ is defined as
$$
\begin{aligned}
\bar a_1 &:= s-a_1 = \frac{-a_1 + a_2 + a_3 + a_4}2, \quad &\bar a_2 &:= s-a_2 = \frac{a_1 - a_2 + a_3 + a_4}2,\\
\bar a_3 &:= s-a_3 = \frac{a_1 + a_2 - a_3 + a_4}2, \quad &\bar a_4 &:= s-a_4 = \frac{a_1 + a_2 + a_3 - a_4}2
\end{aligned}
$$
\end{dfn}
\begin{lem}
The map $a \mapsto \bar a$ is an involution on the space of quadruples $a \in \R^4$ satisfying inequalities \eqref{eqn:QuadIneqA} and \eqref{eqn:QuadIneqB}.
\end{lem}
\begin{proof}
We have $\bar{\bar a} = a$ since $\sum \bar a_i = \sum a_i$. The quadrangle inequality $s > a_i$ is equivalent to the positivity of the length $\bar a_i > 0$.
\end{proof}
Thus a quadruple conjugate to the edge lengths of a quadrilateral can itself be used as edge lengths of another quadrilateral.

Note that $\bar a = a$ if and only if all $a_i$ are equal, and $\bar a$ coincides with $a$ up to the dihedral group action on $\{1, 2, 3, 4\}$ if and only if $a_{\min} + a_{\max} = s$. Thus for $a_1 \pm a_2 \pm a_3 \pm a_4 \ne 0$ quadrilaterals in $Q(\bar a)$ are not congruent to quadrilaterals in $Q(a)$, even if we ignore the marking of sides.

The more surprising is the second part of Theorem \ref{thm:NonOr}: for every quadrilateral with side lengths $a$ and diagonal lengths $x$ and $y$ there is a quadrilateral with side lengths $\bar a$ and diagonal lengths $x$ and $y$.
For this, it suffices to show that the coefficients $d_{ij}$ in \eqref{eqn:DiagCoord} don't change if we replace $a$ by $\bar a$. This follows from $a+b = \bar c + \bar d$, $a-b = \bar b - \bar a$, which implies that the coefficients $d_{01}$ and $d_{10}$ don't change and from Lemma \ref{lem:SIdent2} below, which takes care of $d_{11}$ and $d_{00}$.

\begin{lem}
\label{lem:SIdent2}
Let
\[
s = \frac{a+b+c+d}2, \quad \bar a = s-a, \quad \ldots
\]
Then we have
\begin{gather*}
a^2 + b^2 + c^2 + d^2 = \bar a^2 + \bar b^2 + \bar c^2 + \bar d^2\\
ab + cd = \bar a \bar b + \bar c \bar d\\
ab - cd = \frac12 (\bar c^2 + \bar d^2 - \bar a^2 - \bar b^2)
\end{gather*}
\end{lem}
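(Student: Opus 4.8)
The plan is to reduce each of the three identities to the single substitution $\bar a = s-a$ (and likewise for $b,c,d$) together with the defining relation $a+b+c+d=2s$; in every case the terms that are quadratic or linear in $s$ cancel, leaving precisely the asserted equality. As a cross-check I will note that the first two identities can alternatively be recovered from Lemma \ref{lem:SIdent} by symmetrization.

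First I would treat the equality of the power sums. Expanding $\bar a^2 = s^2 - 2sa + a^2$ together with the three analogous relations and summing, the constant part contributes $4s^2$, the linear part contributes $-2s(a+b+c+d) = -4s^2$, and these cancel, so $\bar a^2 + \bar b^2 + \bar c^2 + \bar d^2 = a^2+b^2+c^2+d^2$. For the second identity I would expand $\bar a\bar b + \bar c\bar d = 2s^2 - s(a+b+c+d) + (ab+cd)$, where the first two terms again cancel; equivalently, adding the Lemma \ref{lem:SIdent} identities $ab - \bar a\bar b = s(s-c-d)$ and $cd - \bar c\bar d = s(s-a-b)$ gives $(ab+cd) - (\bar a\bar b + \bar c\bar d) = s\bigl(2s - a - b - c - d\bigr) = 0$.

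For the third identity I would start from the right-hand side, writing $\bar c^2 + \bar d^2 - \bar a^2 - \bar b^2 = 2s(a+b-c-d) + (c^2+d^2-a^2-b^2)$, and then use $s(a+b-c-d) = \frac12(a+b+c+d)(a+b-c-d) = \frac12\bigl((a+b)^2 - (c+d)^2\bigr)$; combining this with $\frac12(c^2+d^2-a^2-b^2)$ simplifies to $ab-cd$. I do not expect any genuine obstacle: the computations are completely forced once one observes that $s$ is half the total of the four numbers. The only point requiring care is bookkeeping — keeping the pair $\{a,b\}$ distinguished from $\{c,d\}$ throughout, so that the signs in the two non-symmetric identities come out correctly.
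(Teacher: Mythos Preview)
Your proof is correct. The paper actually states Lemma~\ref{lem:SIdent2} without proof, treating the identities as elementary; your direct expansion using $\bar a = s-a$ and $a+b+c+d=2s$ is exactly the intended verification, and the alternative derivation of the second identity from Lemma~\ref{lem:SIdent} is a nice consistency check.
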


It is easy to see that
\[
a_{\min} + a_{\max} < s \Leftrightarrow \bar a_{\min} + \bar a_{\max} > \bar s = s
\]
Thus $\QorC(a)$ is a quotient of $\C$ by a rectangular lattice if and only if $\QorC(\bar a)$ is a quotient of $\C$ by a rhombic lattice. Thus the orientation forgetting covers over $Q^{\C}(a) \cong Q^{\C}(\bar a)$ look as shown on Figure \ref{fig:Covers}, and Theorem \ref{thm:NonOr} is proved.


\section{Periodic quadrilaterals}
\label{sec:Periodic}
\subsection{Folding of conic quadrilaterals}
\begin{prp}
For a quadrilateral of conic type, the composition of foldings
\[
F_3 \circ F_4 \colon \QorC(a) \to \QorC(a)
\]
has a unique fixed point, namely the ``flattened'' quadrilateral with collinear vertices.

For any initial position $q \in \QorC(a)$, the sequence $q_n = (F_3 \circ F_4)^n(q)$ converges to the flattened quadrilateral, with the angles of $q_n$ tending to $0$ or $\pi$ exponentially.
\end{prp}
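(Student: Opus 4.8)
The plan is to make the map $F_3\circ F_4$ completely explicit on the trigonometric parametrization of $\QorC(a)$ from Propositions \ref{prp:Param1+3=2+4} and \ref{prp:Param1+2=3+4}, where it turns out to be a nonzero translation of $\C/2\pi\Z$. A nonzero translation of $\C/2\pi\Z$ has no fixed point there, so its only fixed point on the one-point compactification $\QorC(a)$ is the adjoined point $\{\infty\}$; and $\{\infty\}$ is exactly the flattened quadrilateral with collinear vertices, since on approaching it all the $z_i$ tend to $\infty$ or to $0$, i.e. all turning angles tend to $0$ or $\pi$. Exponential convergence of an arbitrary orbit then comes for free, because iterating the translation pushes the imaginary part of the parameter to infinity at a linear rate.

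The first step is to see how each folding acts. The folding $F_4$ reflects the vertex $D$ in the diagonal $AC$; it leaves the triangle $ABC$ pointwise fixed, hence fixes the angle $\phi_2$ at $B$, and with it $z_2$ and the diagonal $|AC|$; moreover it is a nontrivial involution of $\QorC(a)$, acting trivially only on a quadrilateral whose vertex $D$ already lies on the line $AC$, which is not the generic point. In the same way $F_3$ reflects $C$ in the diagonal $BD$, leaves $ABD$ pointwise fixed, and fixes $\phi_1$ and $z_1$. I would then use the parametrization in the case $a_1+a_3=a_2+a_4$ (the case $a_1+a_2=a_3+a_4$ is identical via Proposition \ref{prp:Param1+2=3+4}; a cyclic relabeling lets one assume $a_3=a_{\min}$, and what follows is insensitive to this): there $z_1=p_1\sin t$ and $z_2=p_2\sin(t+i\sigma)$. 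The function $t\mapsto z_1$ maps $\QorC(a)$ onto $\CP^1$ two-to-one, with generic fiber $\{t,\ \pi-t\}$; since $F_3$ preserves $z_1$ pointwise it preserves every fiber, and as a nontrivial involution it must interchange the two points of a generic fiber, so $F_3\colon t\mapsto\pi-t$. Likewise, from the fiber $\{t,\ \pi-2i\sigma-t\}$ of $t\mapsto z_2$ one obtains $F_4\colon t\mapsto\pi-2i\sigma-t$. (As a check, these formulas send $z_3\mapsto -z_3$ under $F_3$ and $z_4\mapsto -z_4$ under $F_4$, matching that a folding mirrors a neighborhood of the vertex it moves, while moving the other $z_i$ to genuinely new values.)

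Composing, $(F_3\circ F_4)(t)=\pi-(\pi-2i\sigma-t)=t+2i\sigma$, a translation of $\C/2\pi\Z$ by $2i\sigma$. Here $\sigma>0$, because the argument of the logarithm defining $\sigma$ exceeds $1$: indeed $a_1a_3<a_2a_4$, since $a_1+a_3=a_2+a_4$ together with $a_3=a_{\min}$ forces $a_1=a_{\max}$, making $\{a_1,a_3\}$ the more widely spaced pair, and the inequality is strict because equality would make $a$ a deltoid, which is excluded in the conic case. Hence $F_3\circ F_4$ is not the identity, it has no fixed point on $\C/2\pi\Z$, and its unique fixed point on the compactification $\QorC(a)$ is $\{\infty\}$; concretely, writing $w=e^{it}$ identifies $\C/2\pi\Z$ with $\C^{*}$ and turns the map into $w\mapsto e^{-2\sigma}w$, whose fixed points $0$ and $\infty$ are the two branches of the node of $\QorC(a)$, i.e. the single point $\{\infty\}$. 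This point is the flattened quadrilateral with collinear vertices: as $\Im t\to+\infty$ one has $z_i\to\infty$ for all $i$, hence $\phi_i\to\pi$ (in the case $a_1+a_2=a_3+a_4$ one gets $\phi_1=\phi_3=\pi$ and $\phi_2=\phi_4=0$ instead, since there $z_2,z_4$ are replaced by $z_2^{-1},z_4^{-1}$). Finally, for any initial point $q=t_0\in\C/2\pi\Z$ the orbit is $t_n=t_0+2ni\sigma$, so $\Im t_n\to+\infty$ linearly in $n$; hence $|z_i|$, respectively $1/|z_i|$, grows like $e^{2n\sigma}$, and every $\phi_i$ tends to $0$ or $\pi$ exponentially, which is the assertion.

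With the parametrization available this proof is essentially a computation, and there is no real obstacle; the only point demanding care is the bookkeeping that matches the two foldings $F_3$, $F_4$ to the two half-angles $\phi_1$, $\phi_2$, hence to the shifts $0$ and $i\sigma$ in the parametrization — in particular making sure each folding is genuinely nontrivial on $\QorC(a)$, so that it is exactly the involution $t\mapsto c-t$ and not the identity — after which the structure of a translation on $\C/2\pi\Z$ forces the conclusion.
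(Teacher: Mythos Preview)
Your proof is correct and follows essentially the same approach as the paper's: identify $F_3(t)=\pi-t$ and $F_4(t)=\pi-2i\sigma-t$ via the trigonometric parametrization, compose to the translation $t\mapsto t+2i\sigma$, and read off the unique fixed point at $\infty$ and the exponential convergence from $|\sin(t_0+2ni\sigma)|\sim e^{2n\sigma}$. You supply more justification than the paper does---the fiber argument for determining each involution, the check that $\sigma>0$, and the nodal-curve picture of the compactification---but the core argument is the same.
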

\begin{proof}
Proposition \ref{prp:Param1+3=2+4} implies that in terms of the parameter $t$ the folding maps take the form
\[
F_3(t) = \pi - t, \quad F_4(t) = \pi - 2i\sigma - t
\]
so that $(F_3 \circ F_4)(t) = t + 2i\sigma$. Thus
\[
(F_3 \circ F_4)^n(t) \to \infty
\]
and $\infty$ is a unique fixed point of $F_4 \circ F_3$. The exponential decay of the angles follows from
\[
z_1((F_3 \circ F_4)^n(t)) = p_1 \sin(t + 2ni\sigma) \sim e^{2n\sigma}
\]
and $z_1 = \tan \frac{\phi_1}2 \sim \frac2{\pi - \phi_1}$ as $\phi_1 \to \pi$.
\end{proof}

\subsection{Folding of elliptic quadrilaterals}
\begin{proof}[Proof of the Darboux porism]
Proposition \ref{prp:SnEuc} implies that the folding maps $F_i \colon \QorC(a) \to \QorC(a)$ are involutions of the form $t \mapsto t_i - t$. Thus the composition $F_3 \circ F_4$ is a translation $t \mapsto t + (t_3-t_4)$. The $n$-th iteration $(F_3 \circ F_4)^n$ is a translation by $n(t_3-t_4)$. If it has a fixed point, then $n(t_3-t_4)$ belongs to the lattice of periods, and consequently $(F_3 \circ F_4)^n = \id$.
\end{proof}

The definition of $n$-periodicity given in the Introduction depends on the choice of an ordered pair of adjacent vertices. Let us clarify this dependence.

\begin{lem}
If the map $F_3 \circ F_4 \colon \QorC(a) \to \QorC(a)$ has order $n$, then the maps $F_4 \circ F_3$, $F_1 \circ F_2$, and $F_2 \circ F_1$ also have order $n$. The map $F_2 \circ F_3$ also has a finite order, but possibly different from $n$.
\end{lem}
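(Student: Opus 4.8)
The plan is to reduce the lemma to the parametrization of $\QorC(a)$ given in Proposition \ref{prp:SnEuc}, under which each folding $F_i$ becomes an involution of the form $t \mapsto t_i - t$ on the elliptic curve $\C/\Lambda$. Concretely, $F_i$ is the reflection fixing the $i$-th angle, so in terms of the holomorphic parameter it negates $t$ up to a constant: $F_i(t) = c_i - t$ for suitable $c_i \in \C$. The composition of any two such involutions is a translation: $F_i \circ F_j(t) = t + (c_i - c_j)$. So the order of $F_i \circ F_j$ is the order of the element $c_i - c_j$ in the group $\C/\Lambda$, and in particular it depends only on the \emph{difference} $c_i - c_j$ modulo $\Lambda$.

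The key step is then to track the four constants $c_1, c_2, c_3, c_4$ using the shift data of Proposition \ref{prp:SnEuc}. There one reads off (in the $\sn$-case, say) that $z_1 = p_1\sn t$, $z_2 = p_2\sn(t+\tau)$, $z_3 = p_3\sn(t-K)$, $z_4 = p_4\sn(t+K+\tau)$, and the folding $F_i$ is characterized by fixing $z_i$ while reflecting the parameter. Since $\sn$ is an odd function and $\sn(\cdot + 2\nu)$-type shifts behave affinely, one finds $c_1 = 0$, $c_2 = \tau$ (up to lattice and the ambiguity $t\mapsto -t$ is absorbed), $c_3 = -2K \equiv 2K$, $c_4 = 2K + 2\tau$, all modulo $\Lambda$ — the precise normalization can be fixed by evaluating at a flattened configuration. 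From this:
\begin{align*}
c_3 - c_4 &\equiv -2K - (2K+2\tau) \equiv -4K - 2\tau \equiv -2\tau \pmod{\Lambda},\\
c_4 - c_3 &\equiv 2\tau,\\
c_1 - c_2 &\equiv -2\tau, \qquad c_2 - c_1 \equiv 2\tau \pmod{\Lambda},
\end{align*}
using that $4K \in \Lambda$ (the real period of the curve $\C/\Lambda$ quotiented appropriately, or simply a lattice vector after the relevant identifications). Hence $F_3\circ F_4$, $F_4\circ F_3$, $F_1\circ F_2$, $F_2\circ F_1$ are all translations by $\pm 2\tau$, so they have the same order $n$. By contrast $F_2\circ F_3$ is translation by $c_2 - c_3 \equiv \tau - 2K \equiv \tau + 2K$, which need not be a multiple of $2\tau$, so its order may differ; this proves the last sentence. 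The $\cn$-case is identical with $K$ and $\tau$ playing the same roles.

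The main obstacle is the bookkeeping of the shift constants modulo the lattice $\Lambda$ — one must be careful that the parameter of Proposition \ref{prp:SnEuc} is only defined up to $t\mapsto -t$ and up to $\Lambda$, so the $c_i$ individually are not well-defined, only the differences $c_i - c_j$ are, and one must check these differences land where claimed. The cleanest way to pin them down is to use a special (flattened or symmetric) quadrilateral where the fixed points of each $F_i$ are visible geometrically, and to invoke that a translation of an elliptic curve is determined by its value at one point; this sidesteps any sign ambiguity. Everything else is the formal statement that a product of two point-reflections on $\C/\Lambda$ is a translation by twice the difference of the centers.
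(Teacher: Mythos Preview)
Your approach is the paper's approach: use the parametrization of Proposition~\ref{prp:SnEuc} to write each $F_i$ as a reflection $t\mapsto c_i - t$, so that each $F_i\circ F_j$ is translation by $c_i-c_j$, and then compare these shifts modulo the period lattice. The paper does exactly this (it also shortcuts $F_4\circ F_3$ by noting it is conjugate to $F_3\circ F_4$).

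However, your bookkeeping is off in several places. First, the fold $F_i$ does \emph{not} fix the $i$-th angle: reflecting vertex $i$ in the diagonal through its two neighbors leaves the \emph{opposite} vertex fixed, so $F_i$ fixes $z_{i+2}$ (and negates $z_i$). One can confirm this from the conic case in Proposition~\ref{prp:Param1+3=2+4}, where the paper gives $F_3(t)=\pi - t$ and this fixes $z_1 = p_1\sin t$, not $z_3$. Second, your stated values $c_1=0,\ c_2=\tau,\ c_3=-2K,\ c_4=2K+2\tau$ are not consistent with either interpretation, and the line ``$c_1-c_2\equiv -2\tau$'' contradicts them (it gives $-\tau$). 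With the correct determination --- using $\sn(2K-u)=\sn u$ and $F_i$ fixing $z_{i+2}$ --- one finds in the $\sn$-case $c_1=0$, $c_2=-2\tau$, $c_3=2K$, $c_4=2K-2\tau$, whence $c_3-c_4=c_1-c_2=2\tau$ and $c_2-c_3=-2K-2\tau$, exactly as the paper records. Third, you assert but do not argue that $F_2\circ F_3$ has \emph{finite} order; this needs the remark that $2K$ is a half-period ($4K\in\Lambda$), so if $2\tau$ is torsion in $\C/\Lambda$ then so is $-2K-2\tau$.

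None of this is a gap in strategy --- once the $c_i$ are corrected the argument goes through verbatim and coincides with the paper's --- but the computation as written does not stand.
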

\begin{proof}
The maps $F_4 \circ F_3$ and $F_3 \circ F_4$ have the same order because they are conjugate to each other. Proposition \ref{prp:SnEuc} implies that the maps $F_3 \circ F_4$ and $F_1 \circ F_2$ are translations by the same vector (modulo the lattice of periods), therefore have the same order. Finally, the same theorem shows that
\[
F_3 \circ F_4(t) = t + 2\tau, \quad F_2 \circ F_3(t) = t - 2K - 2\tau
\]
Thus one translation is commensurable with the lattice of periods if and only if the other is.
\end{proof}

\begin{dfn}
A quadrilateral is called \emph{$n$-periodic}, if both $F_3 \circ F_4$ and $F_2 \circ F_3$ have order $n$. It is called \emph{$(n,m)$-periodic}, if one of the maps $F_3 \circ F_4$ and $F_2 \circ F_3$ has order $n$, and the other has order $m$.
\end{dfn}

On the other hand, the folding period \emph{modulo orientation}, that is the order of the natural map $\hat F_3 \circ \hat F_4 \colon Q^{\C}(a) \to Q^{\C}(a)$ does not depend on the choice of a pair of adjacent vertices because $\hat F_1 = \hat F_3$ and $\hat F_2 = \hat F_4$.


\begin{prp}
\label{prp:Period}
Let $\sigma \in (0,K')$ be the basic shift as defined in Theorem \ref{prp:RealEll}. Assume that $\sigma = \frac{m}{n}K'$ with $m$ and $n$ coprime. Then all quadrilaterals with side lengths $(a_1,a_2,a_3,a_4)$ are $n$-periodic up to symmetry. Besides,
\begin{enumerate}
\item in the $\sn$-case
\[
\begin{aligned}
n \text{ is even} &\Rightarrow \text{the period length is }n\\
n \text{ is odd} &\Rightarrow \text{the period lengths are }(n,2n)
\end{aligned}
\]
\item in the $\cn$-case
\[
\begin{aligned}
n \text{ is even} &\Rightarrow \text{the period length is }2n\\
n \text{ is odd} &\Rightarrow \text{the period lengths are }(n,2n)
\end{aligned}
\]
\end{enumerate}
\end{prp}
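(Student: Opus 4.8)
The plan is to reduce everything to computing the order of a translation on the elliptic curve $\QorC(a)\cong\C/\Lambda$, with careful attention to the lattice $\Lambda$. By Proposition \ref{prp:SnEuc} the curve is parametrized by a coordinate $t$ whose period lattice is that of $\sn(\cdot;k)$, i.e.\ $\Lambda=\langle 4K,2iK'\rangle$ (rectangular) in the $\sn$-case, and that of $\cn(\cdot;k)$, i.e.\ $\Lambda=\langle 4K,2K+2iK'\rangle$ (rhombic) in the $\cn$-case; the same proposition gives that $F_3\circ F_4$ and $F_2\circ F_3$ act by $t\mapsto t+2\tau$ and $t\mapsto t-2K-2\tau$ respectively, with $\tau\in(0,iK')$. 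First I would note $\tau=i\sigma$ with $\sigma\in(0,K')$ the shift of Proposition \ref{prp:RealEll} --- this is exactly the Jacobi imaginary transformation used to pass from Proposition \ref{prp:SnEuc} to Proposition \ref{prp:RealEll} --- so the hypothesis $\sigma=\tfrac{m}{n}K'$ with $\gcd(m,n)=1$ becomes $2\tau=\tfrac{2m}{n}\,iK'$. Thus the order of $F_3\circ F_4$ is the least $j\ge 1$ with $\tfrac{2jm}{n}\,iK'\in\Lambda$, and the order of $F_2\circ F_3$ is the least $j\ge 1$ with $2jK+\tfrac{2jm}{n}\,iK'\in\Lambda$ (signs being immaterial since $\Lambda=-\Lambda$).

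The first order is governed only by the purely imaginary sublattice of $\Lambda$, which is $2iK'\Z$ in the rectangular case and $4iK'\Z$ in the rhombic case (an element $4aK+b(2K+2iK')$ is purely imaginary only if $b=-2a$, giving $-4a\,iK'$). Hence in the $\sn$-case the condition reads $n\mid jm$, with least solution $j=n$; in the $\cn$-case it reads $2n\mid jm$, with least solution $j=2n$ if $m$ is odd and $j=n$ if $m$ is even (using $\gcd(m,n)=1$). In particular, since $n$ even forces $m$ odd, $F_3\circ F_4$ has order $2n$ in the $\cn$-case whenever $n$ is even.

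For $F_2\circ F_3$ the real term $2jK$ also enters the picture. In the rectangular case $\Lambda=4K\Z\oplus 2iK'\Z$ splits, so membership forces $2\mid j$ and $n\mid j$ separately, and the order is $\operatorname{lcm}(2,n)$, i.e.\ $n$ for $n$ even and $2n$ for $n$ odd. In the rhombic case, writing $2jK+\tfrac{2jm}{n}\,iK'=4aK+b(2K+2iK')$ and matching imaginary parts gives $b=\tfrac{jm}{n}$, hence $n\mid j$; putting $j=n\ell$ and matching real parts gives $2a=\ell(n-m)$, solvable with $\ell=1$ iff $n-m$ is even (equivalently $m,n$ both odd) and otherwise requiring $\ell=2$; so the order is $n$ if $m,n$ are both odd and $2n$ otherwise. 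Running over the four combinations of case ($\sn$/$\cn$) and parity of $n$ (and using once more $n$ even $\Rightarrow$ $m$ odd) reproduces precisely the period lengths in (1) and (2). Finally, that all these quadrilaterals are $n$-periodic up to symmetry follows from the same bookkeeping on $Q^{\C}(a)=\C/\langle 2K,2iK'\rangle$ (rectangular in both cases by Theorem \ref{thm:NonOr}): the induced map $\hat F_3\circ\hat F_4$ is the purely imaginary translation by $2\tau$, whose order on this lattice is $n$.

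The computations themselves are elementary arithmetic modulo $2$ and $n$; the point that needs care is pinning down $\Lambda$ --- in particular that in the $\cn$-case it is the rhombic lattice $\langle 4K,2K+2iK'\rangle$ and \emph{not} $\langle 4K,4iK'\rangle$, since the ``diagonal'' generator $2K+2iK'$ is exactly what couples the real and imaginary parts of the translation vector of $F_2\circ F_3$ and accounts for the period-doubling phenomena. A secondary thing to verify carefully is the equality $\tau=i\sigma$, i.e.\ that $\dn\tau=\sqrt{\tfrac{a_2a_4}{\bar a_2\bar a_4}}$ from Proposition \ref{prp:SnEuc} and $\sn(\sigma;k')=\sqrt{\tfrac{\bar a_1\bar a_3}{a_2a_4}}$ from Proposition \ref{prp:RealEll} describe the same point; this rests on the identities of Lemmas \ref{lem:SIdent} and \ref{lem:SIdent2}, e.g.\ $a_1a_3+a_2a_4=\bar a_1\bar a_3+\bar a_2\bar a_4$.
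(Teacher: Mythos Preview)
Your proof is correct and follows essentially the same approach as the paper: both arguments identify $F_3\circ F_4$ and $F_2\circ F_3$ as translations on $\QorC(a)\cong\C/\Lambda$ and reduce the period computation to elementary lattice arithmetic modulo $2$ and $n$. The only difference is cosmetic: the paper works in the real parameter $u$ of Proposition~\ref{prp:RealEll} (shifts $2\sigma$ and $2iK-2\sigma$, tested against the period lattices of $\dn(\cdot;k')$ and $\cn(\cdot;k')$), whereas you stay in the parameter $t$ of Proposition~\ref{prp:SnEuc} (shifts $2\tau$ and $-2K-2\tau$, tested against the period lattices of $\sn(\cdot;k)$ and $\cn(\cdot;k)$); your identification $\tau=i\sigma$ is exactly the coordinate change linking the two, and your verification of it via $a_1a_3+a_2a_4=\bar a_1\bar a_3+\bar a_2\bar a_4$ is correct.
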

\begin{proof}
Let us use the parameter change made in Section \ref{sec:EllReal}, see Figures \ref{fig:RealSn} and \ref{fig:RealCn} for illustration. In terms of the real parameter $u$ the two shifts performed by different pairs of adjacent foldings are
\[
2\sigma \text{ and } 2iK - 2\sigma
\]
The $n$-fold iteration results in the shifts by
\[
n \cdot 2\sigma = 2mK', \quad n(2iK - 2\sigma) = 2niK - 2mK'
\]
These are the smallest multiples of $2\sigma$ that belong to the lattice $2K'\Z + 2iK\Z$, which is the period lattice for $Q^{\C}(a)$, see Section \ref{sec:DoubCov}.

More details are needed to determine the period on $\QorC(a)$.
If $n$ is even, then $m$ is odd. Thus both of the above shifts are periods of $\dn$ and half-periods of $\cn$.
If $n$ is odd, then the first shift is a period of $\dn$, but the second only a half-period. If $m$ is also odd, then the first one is a half-period for $\cn$, and the second a period; vice versa if $m$ is even.
\end{proof}

\subsection{Quadrilaterals with small periods}
\subsubsection{Period $2$: orthodiagonal quadrilaterals}
The composition of foldings $F_3 \circ F_4$ has order $2$ if and only if $F_3$ and $F_4$ commute. This happens if and only if the third and the fourth vertices stay on their respective diagonals after the folding. This, in turn, is equivalent to diagonals being orthogonal. Pythagorean theorem implies that diagonals of a quadrilateral are orthogonal if and only if $a_1^2 + a_3^2 = a_2^2 + a_4^2$. This proves the period $2$ case of the Darboux porism without using elliptic functions.

\begin{figure}[ht]
\centering
\begin{picture}(0,0)%
\includegraphics{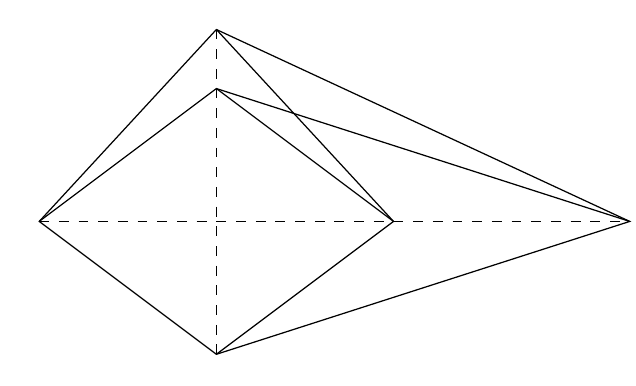}%
\end{picture}%
\setlength{\unitlength}{3108sp}%
\begingroup\makeatletter\ifx\SetFigFont\undefined%
\gdef\SetFigFont#1#2#3#4#5{%
  \reset@font\fontsize{#1}{#2pt}%
  \fontfamily{#3}\fontseries{#4}\fontshape{#5}%
  \selectfont}%
\fi\endgroup%
\begin{picture}(3855,2338)(-239,-1331)
\put(3601,-286){\makebox(0,0)[lb]{\smash{{\SetFigFont{9}{10.8}{\rmdefault}{\mddefault}{\updefault}{\color[rgb]{0,0,0}$A$}%
}}}}
\put(1126,884){\makebox(0,0)[lb]{\smash{{\SetFigFont{9}{10.8}{\rmdefault}{\mddefault}{\updefault}{\color[rgb]{0,0,0}$B$}%
}}}}
\put(-224,-331){\makebox(0,0)[lb]{\smash{{\SetFigFont{9}{10.8}{\rmdefault}{\mddefault}{\updefault}{\color[rgb]{0,0,0}$C$}%
}}}}
\put(1081,-1276){\makebox(0,0)[lb]{\smash{{\SetFigFont{9}{10.8}{\rmdefault}{\mddefault}{\updefault}{\color[rgb]{0,0,0}$D$}%
}}}}
\end{picture}%
\caption{Folding of an orthodiagonal quadrilateral.}
\end{figure}

\begin{prp}
\label{prp:Period2}
A quadrilateral is $2$-periodic if and only if its diagonal are orthogonal to each other. This property depends only on the side lengths of the quadrilateral, and is equivalent to
\begin{equation}
a_1^2 + a_3^2 = a_2^2 + a_4^2
\end{equation}
\end{prp}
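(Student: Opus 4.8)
The plan is to make precise the chain of equivalences already indicated before the statement — $2$-periodicity $\Leftrightarrow$ $F_3$ and $F_4$ commute $\Leftrightarrow$ the diagonals are perpendicular $\Leftrightarrow$ $a_1^2+a_3^2=a_2^2+a_4^2$ — and to supply the only two places where something must actually be checked.

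\emph{Step 1 (reduction to commutativity).} Since $F_3$ and $F_4$ are involutions of $\QorC(a)$, the composition $F_3\circ F_4$ has order dividing $2$ if and only if $F_3\circ F_4=F_4\circ F_3$: this is the pure identity $(F_3F_4)^2=\mathrm{id}\Leftrightarrow F_4F_3=F_3^{-1}F_4^{-1}=F_3F_4$. To settle the two loose ends I would invoke Proposition~\ref{prp:SnEuc}, which (through the proof of the Darboux porism) presents $F_3\circ F_4$ as the translation $t\mapsto t+2\tau$ with $\tau\in(0,iK')$; then $2\tau$ is never a point of the period lattice, so $F_3\circ F_4\ne\mathrm{id}$ and ``$2$-periodic'' is the correct label rather than ``$1$-periodic''. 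The same proposition gives $F_2\circ F_3$ as $t\mapsto t-2K-2\tau$, and since $4K$ belongs to the period lattice in both the $\sn$ and the $\cn$ case, $(F_2\circ F_3)^2=\mathrm{id}$ exactly when $(F_3\circ F_4)^2=\mathrm{id}$; thus the strict definition of $2$-periodicity (order $2$ for both pairs of adjacent foldings) reduces to the single requirement that $F_3$ and $F_4$ commute.

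\emph{Step 2 (commutativity $\Leftrightarrow$ orthogonal diagonals).} Here $F_4$ reflects the vertex $D$ in the diagonal line $AC$, which is fixed because neither folding moves $A$ or $C$, while $F_3$ reflects $C$ in the diagonal line $BD$. Writing $(F_3\circ F_4)(ABCD)=ABC''D'$ and $(F_4\circ F_3)(ABCD)=ABC'D''$, where $D'=\mathrm{refl}_{AC}(D)$, $C'=\mathrm{refl}_{BD}(C)$, $C''=\mathrm{refl}_{BD'}(C)$ and $D''=\mathrm{refl}_{AC'}(D)$, one checks that the two resulting quadrilaterals coincide if and only if $D'$ lies on the line $BD$ (equivalently, $C'$ lies on the line $AC$). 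Since $D'$, being the reflection of $D$ in $AC$, lies on the perpendicular to $AC$ through $D$, it lies on $BD$ exactly when $BD\perp AC$; symmetrically for $C'$. Hence, away from the degenerate positions in which a vertex sits on a diagonal — excluded on $\QorC(a)$ by algebraicity of the condition, or treated separately — $F_3$ and $F_4$ commute iff the diagonals of the quadrilateral are perpendicular. The same argument with $F_2$, the reflection of $B$ in $AC$, in place of $F_4$ shows that $F_2$ and $F_3$ commute under the identical condition, which closes the reduction of Step~1.

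\emph{Step 3 (orthogonality $\Leftrightarrow$ $a_1^2+a_3^2=a_2^2+a_4^2$).} Let $\mathbf u_i$ be the edge vectors of $ABCD$, so that $\mathbf u_1+\mathbf u_2+\mathbf u_3+\mathbf u_4=0$, $|\mathbf u_i|=a_i$, $\vec{AC}=\mathbf u_1+\mathbf u_2$ and $\vec{BD}=\mathbf u_2+\mathbf u_3$. Substituting $\mathbf u_4=-(\mathbf u_1+\mathbf u_2+\mathbf u_3)$ into $a_1^2+a_3^2-a_2^2-a_4^2$ and expanding yields the identity
\[
a_1^2+a_3^2-a_2^2-a_4^2=-2\,\vec{AC}\cdot\vec{BD},
\]
so that, both diagonals being nonzero for a genuine quadrilateral, $\vec{AC}\perp\vec{BD}$ holds iff $a_1^2+a_3^2=a_2^2+a_4^2$. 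In particular $2$-periodicity is a condition on the side lengths alone, which re-proves the period-$2$ case of the Darboux porism without elliptic functions. (One may instead drop perpendiculars from the four vertices to the intersection point $P$ of the diagonals and apply the Pythagorean theorem to the four right triangles, obtaining $a_1^2+a_3^2=PA^2+PB^2+PC^2+PD^2=a_2^2+a_4^2$; I would still prefer the vector identity, since it also delivers the converse at once.) The main obstacle is entirely in Step~2: the criterion ``the two compositions coincide iff the auxiliary diagonal is preserved'' must be stated so that no spurious solution slips in, and the degenerate positions with a vertex on a diagonal must be peeled off; beyond that there is no real difficulty.
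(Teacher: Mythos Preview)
Your proof is correct and follows essentially the same route as the paper's argument (sketched in the paragraph immediately preceding the proposition): order~$2$ $\Leftrightarrow$ $F_3,F_4$ commute $\Leftrightarrow$ the folded vertices stay on their diagonals $\Leftrightarrow$ $AC\perp BD$ $\Leftrightarrow$ $a_1^2+a_3^2=a_2^2+a_4^2$. You add rigor the paper omits --- using Proposition~\ref{prp:SnEuc} to rule out order~$1$ and to verify that $F_2\circ F_3$ also has order~$2$, and giving the vector identity $a_1^2+a_3^2-a_2^2-a_4^2=-2\,\vec{AC}\cdot\vec{BD}$ to get both directions of Step~3 at once --- while the paper later offers a second, purely elliptic-function proof via $\sn 2\sigma=1\Leftrightarrow\bar A=\bar B$ that you do not use.
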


Note that in the orthodiagonal case equation \eqref{eqn:AdjX} can be rewritten as
\[
(A_1 z_1 + B_1 z_1^{-1})(A_2 z_2 + B_2z_2^{-1}) = 1
\]
which also makes the commutation of foldings apparent.

\subsubsection{Period $4$}
Introduce the notation
\[
A = a_1a_3, \quad B = a_2 a_4, \quad \bar A = \bar a_1 \bar a_3, \quad \bar B = \bar a_2 \bar a_4
\]
Then by Theorem \ref{prp:RealEll} we have
\begin{subequations}
\begin{equation}
\label{eqn:DataSn}
(k')^2 = \frac{AB}{\bar A \bar B}, \quad \sn^2 \sigma = \frac{\bar A}B, \quad \cn^2\sigma = 1 - \frac{\bar A}B, \quad \dn^2 \sigma = 1 - \frac{A}{\bar B}
\end{equation}
\begin{equation}
\label{eqn:DataCn}
(k')^2 = \frac{\bar A \bar B}{AB}, \quad \sn^2 \sigma = \frac{A}{\bar B}, \quad \cn^2\sigma = 1 - \frac{A}{\bar B}, \quad \dn^2 \sigma = 1 - \frac{\bar A}{B}
\end{equation}
\end{subequations}
in the $\sn$- and in the $\cn$-case, respectively. (All elliptic functions have elliptic parameter $k'$.) Note that the numbers $A, B, \bar A, \bar B$ satisfy the identities
\[
A+B = \bar A + \bar B, \quad A\bar A - B \bar B = (A+B)(A-\bar B)
\]
Also we have $a_{\min} + a_{\max} < s \Leftrightarrow AB < \bar A \bar B$.

From \eqref{eqn:DataSn} we compute
\[
\sn 2\sigma = \frac{2\sn\sigma\cn\sigma\dn\sigma}{1-k^2\sn^4\sigma} = \frac{2\sqrt{\bar A \bar B}}{\bar A + \bar B}
\]
This gives another proof of Proposition \ref{prp:Period2}, since $\sigma \in (0,K')$ equals $\frac{K'}2$ if and only if $\sn(2\sigma;k') = 1$, which is equivalent to
\[
2 \sqrt{\bar A \bar B} = \bar A + \bar B
\]
that is, to $\bar A = \bar B$, which by Lemma \ref{lem:SIdent2} is equivalent to $a_1^2 + a_3^2 = a_2^2 + a_4^2$.


\begin{prp}
A quadrilateral is $4$-periodic if and only if one of the following takes place:
\begin{enumerate}
\item
$a_{\min} + a_{\max} > s$ and $a_1a_3 = a_2a_4$;
\item
$a_{\min} + a_{\max} < s$ and
\[
(a_1a_3 - a_2a_4)(a_1a_3 + a_2a_4) = \pm 2 \sqrt{\bar a_1 \bar a_2 \bar a_3 \bar a_4}(a_1^2 + a_3^2 - a_2^2 - a_4^2)
\]
\end{enumerate}
\end{prp}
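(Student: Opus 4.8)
The strategy is to first read off from Proposition~\ref{prp:Period} which values of the basic shift $\sigma$ make a quadrilateral $4$-periodic, and then to translate these into relations among the side lengths using the data \eqref{eqn:DataSn}--\eqref{eqn:DataCn} together with the value of $\sn 2\sigma$ computed just before the proposition. If $\sigma=\tfrac mn K'$ in lowest terms, then by Proposition~\ref{prp:Period} both folding compositions have order exactly $4$ precisely when: in the $\cn$-case $2n=4$, i.e.\ $\sigma=\tfrac{K'}2$; and in the $\sn$-case $n=4$, i.e.\ $\sigma\in\{\tfrac{K'}4,\tfrac{3K'}4\}$. (The value $\sigma=\tfrac{K'}2$ in the $\sn$-case gives order $2$, which is why orthodiagonal quadrilaterals do not reappear in the list.)

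For the $\cn$-case I would compute, from \eqref{eqn:DataCn} and the addition law for $\sn$ exactly as in the $\sn 2\sigma$ computation already in the text,
\[
\sn 2\sigma=\frac{2\sn\sigma\cn\sigma\dn\sigma}{1-(k')^2\sn^4\sigma}=\frac{2\sqrt{AB}}{A+B},
\]
the simplification using $\bar B-A=B-\bar A$, which comes from $A+B=\bar A+\bar B$. Since $\sigma\in(0,K')$, we have $\sigma=\tfrac{K'}2$ iff $\sn 2\sigma=1$, i.e.\ iff $(A+B)^2=4AB$, i.e.\ iff $A=B$; this is condition~(1).

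For the $\sn$-case, since $\sn(2K'-t)=\sn t$, the requirement $\sigma\in\{\tfrac{K'}4,\tfrac{3K'}4\}$ is equivalent to $\sn 2\sigma=\sn\tfrac{K'}2$, hence, squaring and using the half-period value $\sn^2\tfrac{K'}2=\tfrac1{1+k}$ with $k=\sqrt{1-(k')^2}$, to
\[
\frac{4\bar A\bar B}{(\bar A+\bar B)^2}=\frac1{1+k},
\]
where $\sn 2\sigma=\tfrac{2\sqrt{\bar A\bar B}}{\bar A+\bar B}$ is the formula already derived. This reads $k=\tfrac{(\bar A-\bar B)^2}{4\bar A\bar B}$; squaring once more and inserting $k^2=1-(k')^2=\tfrac{\bar A\bar B-AB}{\bar A\bar B}$ turns it into a polynomial identity in $A,B,\bar A,\bar B$. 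It then remains to rewrite this identity in the side lengths: using $A=a_1a_3$, $B=a_2a_4$, the relation $a_1^2+a_3^2-a_2^2-a_4^2=2(\bar a_2\bar a_4-\bar a_1\bar a_3)$ from Lemma~\ref{lem:SIdent2}, and the product identity $\bar a_1\bar a_2\bar a_3\bar a_4-a_1a_2a_3a_4=-\,s(s-a_1-a_2)(s-a_2-a_3)(s-a_1-a_3)$ from Section~\ref{sec:Conj}, one brings it to condition~(2); the $\pm$ records that only $\sn^2 2\sigma$, not $\sn 2\sigma$, is pinned down.

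The routine part is the two double-angle computations, which run exactly parallel to the computation of $\sn 2\sigma$ already in the text. The main obstacle is the last algebraic reduction in the $\sn$-case: one must keep careful track of signs, since several amplitudes $p_i$, and hence some of the elliptic-function values, are purely imaginary, so the square roots have to be extracted consistently; and one must verify that the polynomial relation obtained after squaring cuts out precisely the locus $\sigma\in\{\tfrac{K'}4,\tfrac{3K'}4\}$, not the orthodiagonal locus $\sigma=\tfrac{K'}2$ nor a spurious branch introduced by squaring. A short check that both $\sigma=\tfrac{K'}4$ and $\sigma=\tfrac{3K'}4$ actually occur for admissible side lengths then finishes the proof.
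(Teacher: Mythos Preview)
Your handling of the $\cn$-case is identical to the paper's. In the $\sn$-case you take a different but equally valid route: you characterize $\sigma\in\{K'/4,3K'/4\}$ by $\sn 2\sigma=\sn(K'/2;k')$ together with the half--quarter-period value $\sn^2(K'/2;k')=1/(1+k)$, whereas the paper computes $\cn 2\sigma$, $\dn 2\sigma$ and then $\sn 4\sigma$, setting the latter to $\pm1$. Your route saves one duplication step.

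The problem is the final reduction you flag as ``the main obstacle.'' Carrying your computation through gives $k=(\bar A-\bar B)^2/(4\bar A\bar B)$, and with $P=A+B=\bar A+\bar B$, $Q=\bar A-\bar B$, $R=A-B$ one obtains the polynomial relation
\[
P^2R^2 \;=\; Q^2\,(P^2+R^2).
\]
This is confirmed independently by the test $\cn 4\sigma=0\Leftrightarrow \cn^2 2\sigma=\sn^2 2\sigma\,\dn^2 2\sigma$, which with $S^2=(P^2-Q^2)/P^2$, $C^2=Q^2/P^2$, $D^2=R^2/P^2$ yields exactly the same equation. Squaring the paper's condition~(2), however, gives $P^2R^2=4Q^2(P^2-Q^2)$, a \emph{different} surface. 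The discrepancy stems from the paper's displayed formula for $\sn 4\sigma$, which has $|A-B|$ in the denominator: that is what $2\sn\cn/\dn$ produces, not the Jacobi duplication $\sn 2u=2\sn u\,\cn u\,\dn u/(1-k^2\sn^4 u)$. So your plan cannot terminate in condition~(2) as printed; rather, it leads to the corrected form
\[
\bigl((a_1a_3)^2-(a_2a_4)^2\bigr)^2=\tfrac12\,(a_1^2+a_3^2-a_2^2-a_4^2)^2\bigl((a_1a_3)^2+(a_2a_4)^2\bigr).
\]
One more small point: your justification of the $\pm$ is off. Since $\sn 2\sigma>0$ for $\sigma\in(0,K')$, squaring $\sn 2\sigma$ introduces no sign ambiguity; both values $K'/4$ and $3K'/4$ are already captured by the single equation $\sn 2\sigma=\sn(K'/2)$.
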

\begin{proof}
According to Proposition \ref{prp:Period}, a quadrilateral is $4$-periodic if either $a_{\min} + a_{\max} < s$ and $\sigma \in \{\frac{K'}4, \frac{3K'}4\}$ or $a_{\min} + a_{\max} > s$ and $\sigma = \frac{K'}2$.

In the latter case by symmetry with the $\sn$-case we have
\[
\sigma = \frac{K'}2 \Leftrightarrow \frac{2\sqrt{AB}}{A + B} = 1 \Leftrightarrow A = B
\]

In the former case we compute
\[
\sn 2\sigma = \frac{2\sqrt{\bar A \bar B}}{\bar A + \bar B} \Rightarrow \cn 2\sigma = \pm \frac{|\bar A - \bar B|}{A + B}, \quad \dn 2\sigma = \frac{|A-B|}{A+B}
\]
where $\cn 2\sigma > 0$ if and only if $\sigma \in (0, \frac{K'}2)$. Using once again the formula for $\sn 2x$, we obtain
\[
\sn 4\sigma = \pm 4 \frac{\sqrt{\bar A \bar B} |\bar A - \bar B|}{(A+B)|A-B|}
\]
Since $\sigma \in \{\frac{K'}4, \frac{3K'}4\} \Leftrightarrow \sn 4\sigma = \pm 1$, we obtain the second condition.
\end{proof}

\subsubsection{Period 3}
\begin{prp}
\label{prp:Period3}
A quadrilateral is $(3,6)$-periodic if and only if
\[
(a_1a_3 + a_2a_4)^2 \in \{4a_1a_3\bar a_2 \bar a_4, 4a_2a_4\bar a_1 \bar a_3, 4a_1a_3\bar a_1 \bar a_3, 4 a_2a_4\bar a_2\bar a_4\}
\]
\end{prp}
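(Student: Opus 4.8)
The plan is to convert $(3,6)$-periodicity into an equation for the basic shift $\sigma$ and then evaluate that equation with the data of Theorem~\ref{prp:RealEll}.

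By Proposition~\ref{prp:Period} a quadrilateral is $(3,6)$-periodic exactly when $\sigma = \tfrac{m}{3}K'$ with $\gcd(m,3)=1$; since $\sigma\in(0,K')$ this leaves $\sigma=\tfrac{K'}{3}$ or $\sigma=\tfrac{2K'}{3}$ (for $n=3$, odd, the period lengths are $(3,6)$ in both the $\sn$- and the $\cn$-case). So I must show that $3\sigma\in\{K',2K'\}$ is equivalent to the stated alternative. For $\sigma=\tfrac{K'}{3}$ one has $2\sigma=K'-\sigma$, whence $\sn 2\sigma=\dfrac{\cn\sigma}{\dn\sigma}$; for $\sigma=\tfrac{2K'}{3}$ one has $2\sigma=2K'-\sigma$, whence $\sn 2\sigma=\sn\sigma$. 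Conversely each of these two equations has, in $(0,K')$, only the corresponding solution, so nothing is lost.

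Next I would feed in Theorem~\ref{prp:RealEll}. Exactly as in the period $4$ section, the duplication formula and \eqref{eqn:DataSn} give $\sn 2\sigma=\dfrac{2\sqrt{\bar A\bar B}}{A+B}$ in the $\sn$-case (and $\sn 2\sigma=\dfrac{2\sqrt{AB}}{A+B}$ in the $\cn$-case), where I abbreviate $A=a_1a_3$, $B=a_2a_4$, $\bar A=\bar a_1\bar a_3$, $\bar B=\bar a_2\bar a_4$. From \eqref{eqn:DataSn} together with $\bar B-A=B-\bar A$ one also reads off $\sn^2\sigma=\tfrac{\bar A}{B}$ and $\dfrac{\cn^2\sigma}{\dn^2\sigma}=\tfrac{\bar B}{B}$ (and analogously $\sn^2\sigma=\tfrac{A}{\bar B}$, $\dfrac{\cn^2\sigma}{\dn^2\sigma}=\tfrac{B}{\bar B}$ in the $\cn$-case). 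Squaring the two equations above and substituting, everything cancels and one is left with a single quadratic relation: in the $\sn$-case $\sigma=\tfrac{K'}{3}$ becomes $(A+B)^2=4B\bar A$ and $\sigma=\tfrac{2K'}{3}$ becomes $(A+B)^2=4B\bar B$; in the $\cn$-case the two become $(A+B)^2=4A\bar B$ and $(A+B)^2=4B\bar B$.

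To finish I would use that $(3,6)$-periodicity, the number $(a_1a_3+a_2a_4)^2$, and the four-element set in the statement are all invariant under the dihedral relabelling of the vertices, so one may freely normalise the labelling so that $a_3=a_{\min}$ (in the $\sn$-case) or $a_1=a_{\max}$ (in the $\cn$-case) — which is what Theorem~\ref{prp:RealEll} requires — and then transport the two conditions just found back. A dihedral relabelling either fixes the ordered pair $(a_1a_3,a_2a_4)$, and with it $(\bar a_1\bar a_3,\bar a_2\bar a_4)$, or interchanges the two entries of each; under the swap the pair $(A+B)^2\in\{4B\bar A,4B\bar B\}$ turns into $(A+B)^2\in\{4A\bar B,4A\bar A\}$, and similarly in the $\cn$-case, so the union over all labellings is exactly $(a_1a_3+a_2a_4)^2\in\{4a_1a_3\bar a_2\bar a_4,\,4a_2a_4\bar a_1\bar a_3,\,4a_1a_3\bar a_1\bar a_3,\,4a_2a_4\bar a_2\bar a_4\}$. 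The delicate point is this last reconciliation: one must check that whenever one of the four equations holds, the $\sn$/$\cn$ dichotomy and the position of $a_{\min}$ (resp.\ $a_{\max}$) are forced in the way that makes that very equation one of the two conditions for the normalised quadrilateral. This is a short case analysis using only $A+B=\bar A+\bar B$ and the elementary inequalities $(A+B)^2\ge 4AB$ and $(A+B)^2=(\bar A+\bar B)^2\ge 4\bar A\bar B$; the elliptic-function computation itself is no harder than in the period $4$ case.
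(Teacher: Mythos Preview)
Your argument is essentially the paper's own: reduce to $\sigma\in\{K'/3,2K'/3\}$, rewrite these as $\sn 2\sigma=\cn\sigma/\dn\sigma$ and $\sn 2\sigma=\sn\sigma$, plug in the data \eqref{eqn:DataSn}, \eqref{eqn:DataCn}, and then symmetrize in $A\leftrightarrow B$ to remove the normalisation $a_3=a_{\min}$ resp.\ $a_1=a_{\max}$. You are slightly more careful than the paper about the converse in $(0,K')$ and about the symmetrization step; your $\cn$-case outcome $(A+B)^2=4B\bar B$ differs from the paper's stated $(A+B)^2=4A\bar A$, but this is immaterial since both become the same four-element set after the $A\leftrightarrow B$ swap.
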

\begin{proof}
Due to Proposition \ref{prp:Period} we have to find the necessary and sufficient conditions for $\sigma \in \{\frac{K'}3, \frac{2K'}3\}$.
For $\sigma \in (0,K')$ we have
\begin{gather*}
\sigma = \frac{K'}3 \Leftrightarrow \sn 2\sigma = \sn(K'-\sigma) = \frac{\cn \sigma}{\dn \sigma}\\
\sigma = \frac{2K'}3 \Leftrightarrow \sn 2\sigma = \sn(2K'-\sigma) = \sn\sigma
\end{gather*}
This leads to the equations $(A+B)^2 = 4\bar A B$ and $(A+B)^2 = 4B\bar B$ in the $\sn$-case, and to $(A+B)^2 = 4A \bar B$ and $(A+B)^2 = 4 A \bar A$ in the $\cn$-case. Since we were making assumptions $a_3 = a_{\min}$ and $a_1 = a_{\max}$ in the $\sn$- and $\cn$-cases respectively, omitting them leads to symmetrizing with respect to $A$ and $B$. Thus all four conditions are valid both in the $\sn$- and in the $\cn$-case.
\end{proof}

\begin{exl}
For $a_1=a_2=a_3=a$ and $a_4=b$ we have $\bar a_1 = \bar a_2 = \bar a_3 = \frac{a+b}2$ and $\bar a_4 = \frac{3a-b}2$, so that
\[
(a_1a_3+a_2a_4)^2 = a^2(a+b)^2 = 4a_1a_3\bar a_1 \bar a_3
\]
and hence every quadrilateral with side lengths $(a,a,a,b)$ is $(3,6)$-periodic. For a geometric proof of this fact, see \cite[Figure 7]{Zvo97}. Note that the quadrilateral is Grashof for $a>b$ and non-Grashof for $a<b$.
\end{exl}

%
%

\subsection{The general periodicity condition}
We have $\sigma = \frac{m}{n} K'$ (and thus the quadrilateral is periodic) if and only if
\[
\cn(n\sigma; k') \in \{0, -1, 1\}
\]
It is possible to express $\cn(n\sigma)$ as a rational function of $\cn\sigma$, so that $\sigma = \frac{m}{n}K'$ becomes an algebraic conditions on the side lengths. These rational functions can be computed recursively, with the help of the following formulas that generalize the Chebyshev polynomials.
\[
R_0(t) = 1, \quad R_1(t) = t, \quad R_{n+1}(t) = \frac{2t R_n(t)}{1-(k')^2(1-t^2)(1-R_n^2(t))} - R_{n-1}(t)
\]

Benoist and Hulin \cite{BH04} relate the dynamics of quadrilateral folding to the Poncelet dynamics on the circles of radii $R = a_2a_4$ and $r = \frac12(a_4^2 - a_3^2 + a_2^2 - a_1^2)$ whose centers are at the distance $a = a_1a_3$ from each other. From the known conditions on short trajectories they obtain characterization of quadrilaterals with small folding period. One may then use Cayley's explicit conditions on the periodicity of Poncelet trajectories to obtain the periodicity condition for quadrilateral folding. We choose a more direct approach, following the general principle derived by Griffiths and Harris \cite{GH78} that also leads to a modern proof of Cayley's conditions. This results in our Theorem \ref{thm:Period}, which we prove below.

\begin{thm}[Griffiths--Harris \cite{GH78}]
\label{thm:GH}
Let $E$ be an elliptic curve and $p, q \in E$. For an integer $n \ge 3$ we have $n(p-q) = 0$ with respect to the additive structure on $E$ if and only if the following condition is satisfied.

Choose a meromorphic function $x$ of order $2$ on $E$ such that $x$ has a double pole at $p$ and a zero at $q$. Let $a, b, c \in \C$ be the values of $x$ at its branch points other than $p$, so that $E$ is isomorphic to the Riemann surface of the function
\[
y = \sqrt{(x-a)(x-b)(x-c)} = \sum_{i=0}^\infty A_i x^i
\]
The condition on $p-q$ being of order $n$ is

\begin{align*}
&\begin{vmatrix}
A_2 & A_3 & \ldots & A_{k+1}\\
A_3 & A_4 & \ldots & A_{k+2}\\
\vdots & \vdots & \ddots & \vdots\\
A_{k+1} & A_{k+2} & \ldots & A_{2k}
\end{vmatrix}
= 0, \quad \text{ if } n = 2k+1\\
&\begin{vmatrix}
A_3 & A_4 & \ldots & A_{k+1}\\
A_4 & A_5 & \ldots & A_{k+2}\\
\vdots & \vdots & \ddots & \vdots\\
A_{k+1} & A_{k+2} & \ldots & A_{2k-1}
\end{vmatrix}
= 0, \quad \text{ if } n = 2k
\end{align*}
\end{thm}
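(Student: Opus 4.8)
The plan is to prove this Cayley-type criterion by translating $n$-torsion into a divisor condition and then into a rank condition on an explicit matrix built from $x$ and $y$. The starting remark is that, for $n\ge 1$, the relation $n(p-q)=0$ in $\mathrm{Pic}^0(E)$ holds if and only if the divisor $np-nq$ is principal, i.e. there is a nonzero meromorphic function $f$ on $E$ with $\mathrm{div}(f)=np-nq$. Any such $f$ lies in the Riemann--Roch space $L(np)$ and vanishes to order at least $n$ at $q$; conversely, if a nonzero $f\in L(np)$ vanishes to order $\ge n$ at $q$, then comparing the degrees of the zero and polar divisors of $f$ (each at most $n$) forces $\mathrm{div}(f)=np-nq$ exactly. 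So the whole question is whether $L(np)$ contains a nonzero element vanishing to order $\ge n$ at $q$.

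The next step is to make this explicit using $x$ and $y$. Since $x$ has a double pole at $p$ and $y=\sqrt{(x-a)(x-b)(x-c)}$ a triple pole there, Riemann--Roch gives $\dim L(np)=n$, with basis the monomials $x^i$ for $2i\le n$ together with the functions $x^j y$ for $2j+3\le n$. The hypothesis that $y$ has a power-series expansion $\sum_{i\ge 0}A_i x^i$ near $x=0$ forces $0\notin\{a,b,c\}$, so $q$ is not a branch point: $x$ is a uniformizer at $q$ and $y$ is holomorphic and nonvanishing there. Writing a general element of $L(np)$ as $f=P(x)+Q(x)\,y$, the condition that $f$ vanish to order $\ge n$ at $q$ is the vanishing of the coefficients of $x^0,x^1,\dots,x^{n-1}$ of this power series, an $n\times n$ homogeneous linear system in the coefficients of $P$ and $Q$.

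This system is block-triangular, which is the point. Ordering the rows by the power of $x$, the block coupling the coefficients of $P$ to the first $\lfloor n/2\rfloor+1$ equations is the identity (since $P(x)=\sum p_i x^i$ contributes $p_l$ to the coefficient of $x^l$), while the block coupling the coefficients of $P$ to the remaining equations is zero (a polynomial of degree $\le\lfloor n/2\rfloor$ has no higher-degree terms). Hence the system has a nonzero solution if and only if the residual block — coupling the coefficients of $Q$ to the equations of index $l>\lfloor n/2\rfloor$, and carrying $A_{l-j}$ in the slot pairing $x^j y$ with the coefficient of $x^l$ — is singular. Counting sizes gives a $k\times k$ block for $n=2k+1$ (rows $l=k+1,\dots,2k$; columns $j=0,\dots,k-1$) and a $(k-1)\times(k-1)$ block for $n=2k$ (rows $l=k+1,\dots,2k-1$; columns $j=0,\dots,k-2$); reversing the order of the columns turns each into precisely the Hankel determinant displayed in the statement, so its vanishing is the desired criterion.

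There is no genuine conceptual obstacle; the care is in the index bookkeeping confirming that the residual block is exactly the stated Hankel matrix and that the auxiliary $P$-block is the identity while the lower-left block vanishes — both immediate from the degree bounds on the chosen basis of $L(np)$ (for $n=2$ the residual block is the empty $0\times 0$ matrix, which explains why the theorem is stated only for $n\ge 3$). Feeding this into Theorem \ref{thm:Period} then requires only identifying $E=Q^{\C}(a)$, a suitable degree-two function $x$ on it coming from the diagonal-length equation \eqref{eqn:DiagCoord}, and the pair of points $p,q$ whose difference is the translation vector of the composed folding $\hat F_3\circ\hat F_4$; the three finite branch values then come out to be $\Delta(a)^2$, $\delta(a)^2$, $\delta(\bar a)^2$.
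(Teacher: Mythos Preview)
The paper does not give its own proof of this statement: Theorem~\ref{thm:GH} is quoted from Griffiths--Harris \cite{GH78} and used as a black box in the derivation of Theorem~\ref{thm:Period}. So there is nothing to compare your argument against in the paper itself.

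That said, your proof is correct and is essentially the original Griffiths--Harris argument. The reduction of $n(p-q)=0$ to the existence of a nonzero $f\in L(np)$ with $\operatorname{ord}_q f\ge n$ is standard, your basis $\{x^i: 2i\le n\}\cup\{x^jy: 2j+3\le n\}$ of $L(np)$ is the right one, and the block-triangular structure of the resulting linear system is exactly what isolates the Hankel minor. Your index count is accurate in both parities: for $n=2k+1$ the residual block has rows $l=k+1,\dots,2k$ and columns $j=0,\dots,k-1$, and for $n=2k$ rows $l=k+1,\dots,2k-1$ and columns $j=0,\dots,k-2$; after reversing the column order (which only affects the sign of the determinant) these are precisely the matrices in the statement. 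The observation that the power-series hypothesis forces $0\notin\{a,b,c\}$, hence that $q$ is a simple zero of $x$ and $x$ is a genuine uniformizer there, is the one point that needs to be said and you said it. Your closing paragraph on how this plugs into Theorem~\ref{thm:Period} matches what the paper does in the passage following the statement of Theorem~\ref{thm:GH}.
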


Our first goal is to construct a function $x$ as described in the Griffiths-Harris theorem.
Consider the function $u$ equal to the squared length of the diagonal separating the sides $a_1, a_2$ from $a_3, a_4$. At the branch points, $u$ takes the values
\[
(a_1 + a_2)^2, \quad (a_1 - a_2)^2, \quad (a_3 + a_4)^2, \quad (a_3 - a_4)^2
\]
Let $p \in Q(a)$ be such that $u(p) = (a_3 + a_4)^2$. If $a_1+a_2 > a_3+a_4$, then $p$ lies in the real part of $Q(a)$ and corresponds to the quadrilateral in which the sides $a_3$ and $a_4$ are aligned to form a segment of length $a_3 + a_4$. The quadrilateral $p$ is a fixed point for the folding $\hat F_4$, and the folding $\hat F_3$ maps it to the quadrilateral on Figure \ref{fig:pq}, right.

\begin{figure}
\centering
\begin{picture}(0,0)%
\includegraphics{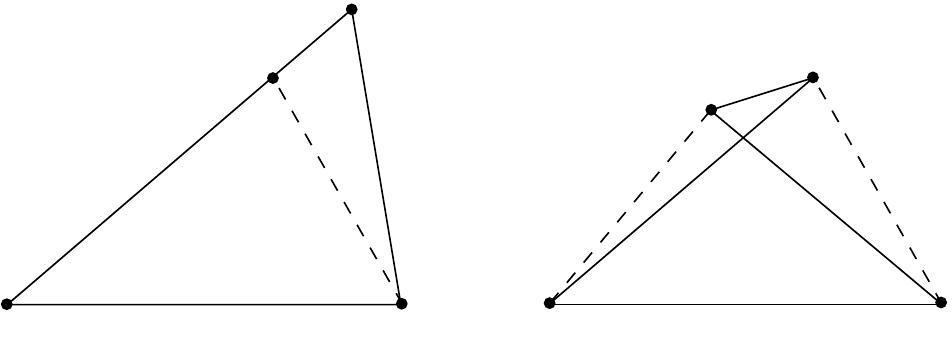}%
\end{picture}%
\setlength{\unitlength}{4144sp}%
\begingroup\makeatletter\ifx\SetFigFont\undefined%
\gdef\SetFigFont#1#2#3#4#5{%
  \reset@font\fontsize{#1}{#2pt}%
  \fontfamily{#3}\fontseries{#4}\fontshape{#5}%
  \selectfont}%
\fi\endgroup%
\begin{picture}(4334,1528)(328,-658)
\put(2088, 71){\makebox(0,0)[lb]{\smash{{\SetFigFont{9}{10.8}{\rmdefault}{\mddefault}{\updefault}{\color[rgb]{0,0,0}$a_2$}%
}}}}
\put(739, 31){\makebox(0,0)[lb]{\smash{{\SetFigFont{9}{10.8}{\rmdefault}{\mddefault}{\updefault}{\color[rgb]{0,0,0}$a_4$}%
}}}}
\put(3669,-598){\makebox(0,0)[lb]{\smash{{\SetFigFont{9}{10.8}{\rmdefault}{\mddefault}{\updefault}{\color[rgb]{0,0,0}$a_1$}%
}}}}
\put(3943,-97){\makebox(0,0)[lb]{\smash{{\SetFigFont{9}{10.8}{\rmdefault}{\mddefault}{\updefault}{\color[rgb]{0,0,0}$a_2$}%
}}}}
\put(3701,489){\makebox(0,0)[lb]{\smash{{\SetFigFont{9}{10.8}{\rmdefault}{\mddefault}{\updefault}{\color[rgb]{0,0,0}$a_3$}%
}}}}
\put(3397,-125){\makebox(0,0)[lb]{\smash{{\SetFigFont{9}{10.8}{\rmdefault}{\mddefault}{\updefault}{\color[rgb]{0,0,0}$a_4$}%
}}}}
\put(4388, 16){\makebox(0,0)[lb]{\smash{{\SetFigFont{9}{10.8}{\rmdefault}{\mddefault}{\updefault}{\color[rgb]{0,0,0}$\sqrt{v(q)} = \sqrt{v(p)}$}%
}}}}
\put(1200,-598){\makebox(0,0)[lb]{\smash{{\SetFigFont{9}{10.8}{\rmdefault}{\mddefault}{\updefault}{\color[rgb]{0,0,0}$a_1$}%
}}}}
\put(1594,697){\makebox(0,0)[lb]{\smash{{\SetFigFont{9}{10.8}{\rmdefault}{\mddefault}{\updefault}{\color[rgb]{0,0,0}$a_3$}%
}}}}
\put(1473,-82){\makebox(0,0)[lb]{\smash{{\SetFigFont{9}{10.8}{\rmdefault}{\mddefault}{\updefault}{\color[rgb]{0,0,0}$\sqrt{v(p)}$}%
}}}}
\put(2880, 26){\makebox(0,0)[lb]{\smash{{\SetFigFont{9}{10.8}{\rmdefault}{\mddefault}{\updefault}{\color[rgb]{0,0,0}$\sqrt{u(q)}$}%
}}}}
\end{picture}%
\caption{The quadrilaterals $p$ and $q = F_3 \circ F_4(p)$.}
\label{fig:pq}
\end{figure}

Denote $q = \hat F_3 \circ \hat F_4(p)$. We have $(\hat F_3 \circ \hat F_4)^n = \id$ if and only if $n(p-q) = 0$. The function
\begin{equation}
\label{eqn:x}
x = \frac{u - u(q)}{u - u(p)}
\end{equation}
has a double pole at $p$ and a zero at $q$, and the same branch points as $u$.

Now it remains to compute the values of $x$ at the three remaining branch points:
\[
\frac{(a_1+a_2)^2 - u(q)}{(a_1+a_2)^2 - (a_3 + a_4)^2}, \quad \frac{(a_1-a_2)^2 - u(q)}{(a_1-a_2)^2 - (a_3 + a_4)^2}, \quad \frac{(a_3-a_4)^2 - u(q)}{(a_3-a_4)^2 - (a_3 + a_4)^2}
\]

\begin{lem}
\label{lem:uq}
We have
\begin{gather*}
v(p) = v(q) = \frac{a_3(a_1^2 - a_4^2) + a_4(a_2^2 - a_3^2)}{a_3+a_4}\\
u(q) = \frac{(a_4-a_3)(a_1^2-a_2^2)}{a_3+a_4} + \frac{(a_1^2 - a_2^2 + a_3^2 - a_4^2)(a_1^2a_3^2 - a_2^2a_4^2)}{(a_3+a_4)(a_3(a_1^2 - a_4^2) + a_4(a_2^2 - a_3^2))}
\end{gather*}
\end{lem}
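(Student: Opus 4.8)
The plan is to read the curve equation \eqref{eqn:DiagCoord} of $Q^{\C}(a)$ as a quadratic polynomial in one diagonal variable with coefficients depending on the other, and to extract $v(p)$, $v(q)$ and $u(q)$ from it. For $v(p)$: the value $u(p)=(a_3+a_4)^2$ is one of the four critical values of the degree-two function $u$ on $Q^{\C}(a)$, so $p$ is a ramification point of $u$. Writing \eqref{eqn:DiagCoord} as
\[
u\,v^2 + \bigl(u^2 + 2d_{11}u + d_{01}\bigr)\,v + \bigl(d_{10}u + d_{00}\bigr) = 0,
\]
the substitution $u=(a_3+a_4)^2$ gives a quadratic in $v$ whose unique (double) root is $v(p)$, namely
\[
v(p) = -\frac{u(p)^2 + 2d_{11}\,u(p) + d_{01}}{2\,u(p)}, \qquad u(p)=(a_3+a_4)^2 .
\]
Inserting the $d_{ij}$ as given with \eqref{eqn:DiagCoord} and simplifying — the numerator factors as $-2(a_3+a_4)\bigl(a_3(a_1^2-a_4^2)+a_4(a_2^2-a_3^2)\bigr)$ — produces the stated formula for $v(p)$.

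Equivalently and more geometrically, in the configuration $p$ the vertex $D$ lies on the segment $AC$ with $AD=a_4$ and $DC=a_3$, so $\sqrt{v(p)}=BD$ is a cevian of the triangle with sides $AB=a_1$, $BC=a_2$, $AC=a_3+a_4$, and Stewart's theorem gives $a_1^2a_3 + a_2^2a_4 = (a_3+a_4)\bigl(v(p)+a_3a_4\bigr)$, the same relation. The equality $v(q)=v(p)$ then follows because $p$ is a fixed point of $\hat F_4$, so $q=\hat F_3\circ\hat F_4(p)=\hat F_3(p)$, and $\hat F_3$ is the reflection in the line $BD$: it fixes both endpoints of the diagonal $y$ and hence preserves $v=y^2$. (In other words, $\hat F_3$ is the deck transformation of the degree-two function $v$, so it leaves $v$ invariant.)

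For $u(q)$: a generic value of $v$ has two points of $Q^{\C}(a)$ above it, and by the previous paragraph $p$ and $q$ are the two points with $v$-coordinate $v(p)$; hence $u(p)$ and $u(q)$ are the two roots of \eqref{eqn:DiagCoord} read as a quadratic in $u$ with $v$ set to $v(p)$,
\[
v(p)\,u^2 + \bigl(v(p)^2 + 2d_{11}v(p) + d_{10}\bigr)\,u + \bigl(d_{01}v(p) + d_{00}\bigr) = 0 .
\]
Vieta's product formula together with $u(p)=(a_3+a_4)^2$ gives
\[
u(q) = \frac{d_{01}\,v(p) + d_{00}}{v(p)\,(a_3+a_4)^2}\,.
\]
Writing $v(p)=N/(a_3+a_4)$ with $N=a_3(a_1^2-a_4^2)+a_4(a_2^2-a_3^2)$ and inserting $d_{01}=(a_1^2-a_2^2)(a_4^2-a_3^2)$ and $d_{00}=(a_1^2-a_2^2+a_3^2-a_4^2)(a_1^2a_3^2-a_2^2a_4^2)$ turns the right-hand side into $\frac{d_{01}}{(a_3+a_4)^2}+\frac{d_{00}}{N\,(a_3+a_4)}$, and cancelling the factor $a_3+a_4$ out of $d_{01}$ in the first summand yields exactly the asserted expression for $u(q)$.

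All the manipulations above are elementary; the only conceptual point is the identification of $u(p)$ and $u(q)$ as the two roots of the last quadratic, which rests on $v(q)=v(p)$ and on $u(p)=(a_3+a_4)^2$ being a critical value of $u$. Since the claimed formulas are polynomial identities in $a_1,\dots,a_4$ after clearing denominators, the degenerate cases (for instance when $p$ happens to coincide with $q$) need no separate treatment.
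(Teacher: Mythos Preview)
Your proof is correct and follows essentially the same route as the paper: the paper computes $v(p)$ via the cosine law in the degenerate triangle (your Stewart's-theorem alternative is equivalent), takes $v(q)=v(p)$ from the fact that $\hat F_3$ preserves the diagonal $y$, and then extracts $u(q)$ from \eqref{eqn:DiagCoord} written as a quadratic in $u$ with $v=v(p)$, using Vieta's product formula exactly as you do. Your additional observation that $v(p)$ can also be read off as the double root of \eqref{eqn:DiagCoord} viewed as a quadratic in $v$ at the branch value $u=(a_3+a_4)^2$ is a pleasant variant not in the paper, but the overall argument is the same.
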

\begin{proof}
By the cosine law we have
\begin{gather*}
v(p) = a_1^2 + a_4^2 + 2a_1a_4 \cos\phi_1(p)\\
a_2^2 = a_1^2 + (a_3+a_4)^2 + 2 a_1(a_3+a_4) \cos\phi_1(p)
\end{gather*}
Expressing $\cos\phi_1(p)$ from the second equation and substituting into the first gives us the formula for $v(p)$.

To compute $u(q)$, rewrite equation \eqref{eqn:DiagCoord} as a quadratic equation in $u$:
\[
v u^2 + (v^2 + 2d_{11} v + d_{10}) u + (d_{01}v + d_{00}) = 0
\]
At $v = v(p)$ this equation has two solutions $u = u(p) = (a_3+a_4)^2$ and $u = u(q)$. Therefore
\[
(a_3+a_4)^2 u(q) = \frac{d_{01} v(p) + d_{00}}{v(p)}
\]
and we arrive at the formula for $u(q)$ stated in the lemma.
\end{proof}

Finally, a lengthy calculation yields the following formulas for the values of $x$ at the branch points:
\[
\frac{(a_1a_3+a_2a_4)^2}{u(p)v(p)}, \quad \frac{(a_1a_3-a_2a_4)^2}{u(p)v(p)}, \quad \frac{(\bar a_1 \bar a_3-\bar a_2 \bar a_4)^2}{u(p)v(p)}
\]
Scaling $x$ by an appropriate functions we obtain a function with the same poles and zeros and with the values
\[
(a_1a_3+a_2a_4)^2, \quad(a_1a_3-a_2a_4)^2, \quad (\bar a_1 \bar a_3-\bar a_2 \bar a_4)^2
\]
at its branch points. This proves Theorem \ref{thm:Period}.

\section{Applications}
\label{sec:App}
\subsection{Ivory's theorem}
\label{sec:Ivory}


The second part of Theorem \ref{thm:NonOr} says that for every quadrilateral with the side lengths $a$ and diagonal lengths $x$ and $y$ there is a quadrilateral with the side lengths $\bar a$ and the same diagonal lengths $x$ and $y$. We call these quadrilaterals \emph{conjugate}.
See Figure \ref{fig:IsomConj} for an example, where we have chosen
\[
a = (10, 5, 6, 3), \quad \bar a = (2, 7, 6, 9)
\]

\begin{figure}
\centering
\begin{picture}(0,0)%
\includegraphics{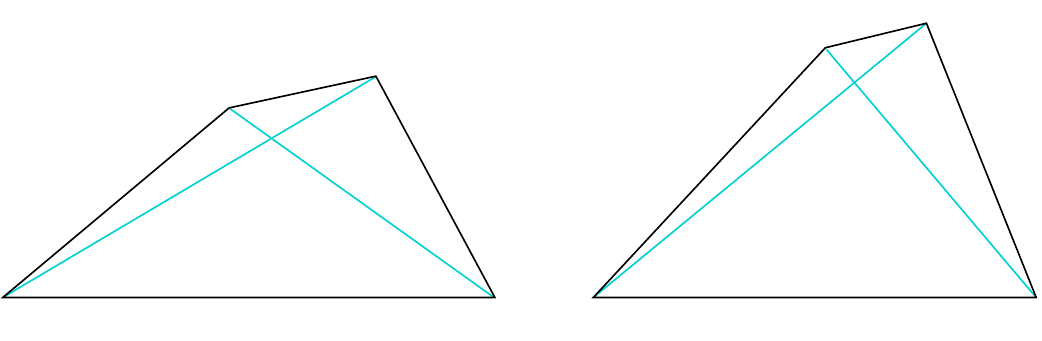}%
\end{picture}%
\setlength{\unitlength}{4144sp}%
\begingroup\makeatletter\ifx\SetFigFont\undefined%
\gdef\SetFigFont#1#2#3#4#5{%
  \reset@font\fontsize{#1}{#2pt}%
  \fontfamily{#3}\fontseries{#4}\fontshape{#5}%
  \selectfont}%
\fi\endgroup%
\begin{picture}(4749,1545)(-11,-1156)
\put(991,-1096){\makebox(0,0)[lb]{\smash{{\SetFigFont{9}{10.8}{\rmdefault}{\mddefault}{\updefault}{\color[rgb]{0,0,0}$a_1$}%
}}}}
\put(1216, 29){\makebox(0,0)[lb]{\smash{{\SetFigFont{9}{10.8}{\rmdefault}{\mddefault}{\updefault}{\color[rgb]{0,0,0}$a_3$}%
}}}}
\put(361,-466){\makebox(0,0)[lb]{\smash{{\SetFigFont{9}{10.8}{\rmdefault}{\mddefault}{\updefault}{\color[rgb]{0,0,0}$a_4$}%
}}}}
\put(3556,-1096){\makebox(0,0)[lb]{\smash{{\SetFigFont{9}{10.8}{\rmdefault}{\mddefault}{\updefault}{\color[rgb]{0,0,0}$\bar a_3$}%
}}}}
\put(3871,254){\makebox(0,0)[lb]{\smash{{\SetFigFont{9}{10.8}{\rmdefault}{\mddefault}{\updefault}{\color[rgb]{0,0,0}$\bar a_1$}%
}}}}
\put(3061,-331){\makebox(0,0)[lb]{\smash{{\SetFigFont{9}{10.8}{\rmdefault}{\mddefault}{\updefault}{\color[rgb]{0,0,0}$\bar a_2$}%
}}}}
\put(811,-556){\makebox(0,0)[lb]{\smash{{\SetFigFont{9}{10.8}{\rmdefault}{\mddefault}{\updefault}{\color[rgb]{0,0,0}$x$}%
}}}}
\put(1396,-556){\makebox(0,0)[lb]{\smash{{\SetFigFont{9}{10.8}{\rmdefault}{\mddefault}{\updefault}{\color[rgb]{0,0,0}$y$}%
}}}}
\put(3331,-601){\makebox(0,0)[lb]{\smash{{\SetFigFont{9}{10.8}{\rmdefault}{\mddefault}{\updefault}{\color[rgb]{0,0,0}$x$}%
}}}}
\put(1981,-421){\makebox(0,0)[lb]{\smash{{\SetFigFont{9}{10.8}{\rmdefault}{\mddefault}{\updefault}{\color[rgb]{0,0,0}$a_2$}%
}}}}
\put(4456,-241){\makebox(0,0)[lb]{\smash{{\SetFigFont{9}{10.8}{\rmdefault}{\mddefault}{\updefault}{\color[rgb]{0,0,0}$\bar a_4$}%
}}}}
\put(4006,-376){\makebox(0,0)[lb]{\smash{{\SetFigFont{9}{10.8}{\rmdefault}{\mddefault}{\updefault}{\color[rgb]{0,0,0}$y$}%
}}}}
\end{picture}%
\caption{A pair of conjugate quadrilaterals.}
\label{fig:IsomConj}
\end{figure}

Arseniy Akopyan has noted \cite{Akop12} that the existence of a conjugate to each quadrilateral is equivalent to the Ivory theorem.

\begin{thm}[Ivory's Theorem]
Take two ellipses and two hyperbolas from a confocal family of conics. Then the diagonals of any curvilinear quadrilateral cut out by them have equal lengths.
\end{thm}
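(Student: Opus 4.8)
The plan is to deduce Ivory's theorem from the second part of Theorem~\ref{thm:NonOr}, following Akopyan's observation. Fix a confocal family of conics with foci $F,F'$ and put $2c=|FF'|$. Let $E_1,E_2$ be two of its ellipses, with semi-major axes $\alpha_1,\alpha_2$, and $H_1,H_2$ two of its hyperbolas, with semi-transverse axes $\beta_1,\beta_2$, so that $\beta_j<c<\alpha_i$. The vertices of the curvilinear quadrilateral are the four intersection points $P_{ij}=E_i\cap H_j$ lying in a fixed quadrant, and the two diagonals whose lengths are to be compared are the segments $P_{11}P_{22}$ and $P_{12}P_{21}$. After naming the foci suitably, the defining additive focal property of each conic gives, for all $i,j$,
\[
|P_{ij}F|=\alpha_i+\beta_j,\qquad |P_{ij}F'|=\alpha_i-\beta_j.
\]

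First I would replace the curvilinear quadrilateral by two rectilinear quadrilaterals that carry the foci among their vertices. Let $\mathcal Q$ be the quadrilateral with cyclic vertex sequence $F,P_{11},F',P_{22}$, and $\mathcal Q'$ the one with cyclic vertex sequence $F,P_{21},F',P_{12}$. Reading off side lengths from the identities above, $\mathcal Q$ has side lengths
\[
a=(\alpha_1+\beta_1,\ \alpha_1-\beta_1,\ \alpha_2-\beta_2,\ \alpha_2+\beta_2),
\]
while $\mathcal Q'$ has side lengths $(\alpha_2+\beta_1,\ \alpha_2-\beta_1,\ \alpha_1-\beta_2,\ \alpha_1+\beta_2)$. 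With $s=\alpha_1+\alpha_2$ one computes $\bar a=(\alpha_2-\beta_1,\ \alpha_2+\beta_1,\ \alpha_1+\beta_2,\ \alpha_1-\beta_2)$, so the side lengths of $\mathcal Q'$ are exactly $\bar a$ up to the dihedral action on the indices; moreover $a_1\pm a_2\pm a_3\pm a_4\neq 0$ precisely because the two ellipses are distinct and the two hyperbolas are distinct, so Theorem~\ref{thm:NonOr} applies to $a$. By construction both $\mathcal Q$ and $\mathcal Q'$ have the segment $FF'$, of length $2c$, as one diagonal; their other diagonals are $P_{11}P_{22}$ and $P_{12}P_{21}$. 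Hence Ivory's theorem is precisely the assertion that $\mathcal Q$ and $\mathcal Q'$ have the same pair of diagonal lengths.

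Next I would invoke the proof of Theorem~\ref{thm:NonOr}: since equation~\eqref{eqn:DiagCoord} is literally unchanged under $a\mapsto\bar a$, the spaces $Q^{\C}(a)$ and $Q^{\C}(\bar a)$ are one and the same curve in the diagonal coordinates $(u,v)=(x^2,y^2)$ on $(\CP^1)^2$, and the isomorphism of the theorem is the identity on this common curve. Consequently the conjugate of $\mathcal Q$ is the quadrilateral with side lengths $\bar a$ and the \emph{same} pair of diagonal lengths $\{\,2c,\ |P_{11}P_{22}|\,\}$, and it remains to recognize this conjugate as $\mathcal Q'$. This is the one delicate step, because prescribing a single diagonal to equal $2c$ leaves in general two congruence classes of quadrilaterals with side lengths $\bar a$: the two ways of gluing, along that diagonal, the two triangles into which it divides the quadrilateral. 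Placing the $2c$-diagonal along $F'F$, the focal property forces the two remaining vertices of the conjugate onto $E_1\cap H_2$ and onto $E_2\cap H_1$ respectively; up to reflection in the axis $FF'$ these are the points $P_{12}$ and $P_{21}$, and the two gluings amount to taking $P_{12},P_{21}$ on the same side of $FF'$ or on opposite sides. The ``same side'' branch is the one containing $\mathcal Q'$, and since the conjugation isomorphism is defined over $\R$, it maps the component of the real locus of $Q(a)$ containing $\mathcal Q$ (where $P_{11},P_{22}$ are on the same side of $FF'$) to the component of the real locus of $Q(\bar a)$ containing $\mathcal Q'$; alternatively one verifies the correct branch in a single explicit configuration and extends by continuity within the confocal family. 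Either way $\mathcal Q'$ is the conjugate of $\mathcal Q$, hence $|P_{11}P_{22}|=|P_{12}P_{21}|$.

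I expect this branch selection to be the main obstacle; the remainder is the dictionary between confocal conics and the conjugate pair $a,\bar a$, which is forced by the focal distance identities, together with keeping track of which diagonal plays the role of $u$ and which of $v$. Finally, since the argument uses the Euclidean plane only through the existence of a confocal one-parameter family of conics and the additive focal law, applying the spherical and hyperbolic analogues of Theorem~\ref{thm:NonOr} in the same way yields the extension of Ivory's theorem to the sphere and to the hyperbolic plane stated in Theorem~\ref{thm:IvorySph}.
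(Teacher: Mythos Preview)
Your approach is essentially the paper's: form the two rectilinear quadrilaterals $F,P_{11},F',P_{22}$ and $F,P_{21},F',P_{12}$, observe that their side-length quadruples are conjugate, and conclude from the invariance of equation~\eqref{eqn:DiagCoord} under $a\mapsto\bar a$ that the non-focal diagonals coincide. You are in fact more careful than the paper in isolating the branch-selection issue: fixing $u=(2c)^2$ leaves two roots for $v$, and one must check that $\mathcal Q$ and $\mathcal Q'$ pick the same root. The paper simply asserts ``the other diagonals have equal lengths'' without addressing this.

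Your continuity resolution is fine, but your component argument does not work as stated. For elliptic $a$ the real locus of genuine non-oriented quadrilaterals in $Q(a)$ is a single circle: as you deform the quadrilateral continuously, the free vertices pass through the line $FF'$ precisely at the branch values of $u$, so the ``same side'' and ``opposite side'' configurations lie on the \emph{same} real component and are not separated by the identity isomorphism on the common $(u,v)$-curve. A cleaner direct argument is available: if the two non-focal vertices lie on the same side of $FF'$ at heights $h_1,h_2>0$, then reflecting one of them across $FF'$ increases $y^2$ by $4h_1h_2>0$. Thus the ``same side'' configuration always realises the \emph{smaller} of the two roots of~\eqref{eqn:DiagCoord} in $v$ at $u=(2c)^2$. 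Since $\mathcal Q$ and $\mathcal Q'$ are both ``same side'' configurations with the same $u$ and satisfy the same quadratic, they yield the same $v$, and $|P_{11}P_{22}|=|P_{12}P_{21}|$ follows.
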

\begin{proof}
Consider a quadrilateral with vertices at the foci and at the endpoints of one of the diagonals of the given curvilinear quadrilateral. The order of vertices is chosen so that the foci form a pair of opposite vertices. The quadrilateral formed in the same way but using the second diagonal of the curvilinear quadrilateral has side lengths conjugate to those of the first quadrilateral. Since this pair of conjugate quadrilaterals has one diagonal in common (the segment between the foci), the other diagonals have equal lengths. See Figure \ref{fig:Ivory}.
\end{proof}

\begin{figure}[ht]
\centering
\begin{picture}(0,0)%
\includegraphics{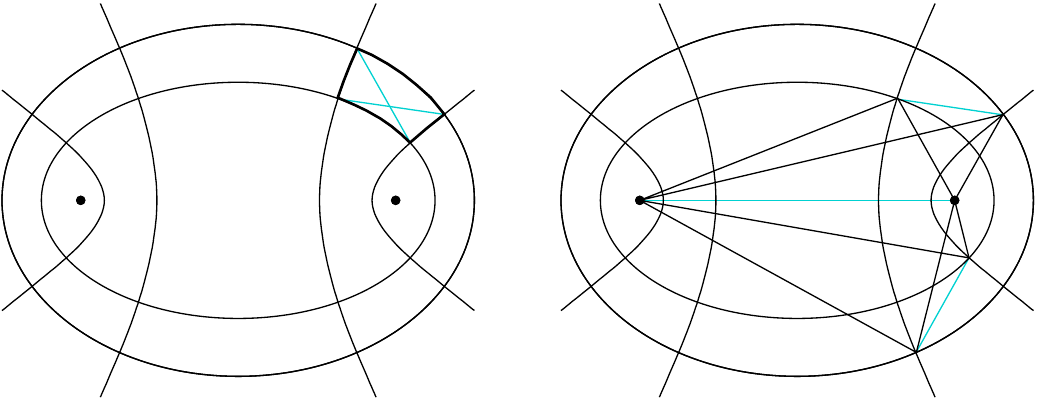}%
\end{picture}%
\setlength{\unitlength}{3315sp}%
\begingroup\makeatletter\ifx\SetFigFont\undefined%
\gdef\SetFigFont#1#2#3#4#5{%
  \reset@font\fontsize{#1}{#2pt}%
  \fontfamily{#3}\fontseries{#4}\fontshape{#5}%
  \selectfont}%
\fi\endgroup%
\begin{picture}(5919,2274)(-4556,-298)
\put(991,929){\makebox(0,0)[lb]{\smash{{\SetFigFont{7}{8.4}{\rmdefault}{\mddefault}{\updefault}{\color[rgb]{0,0,0}$a_3$}%
}}}}
\put( 91,704){\makebox(0,0)[lb]{\smash{{\SetFigFont{7}{8.4}{\rmdefault}{\mddefault}{\updefault}{\color[rgb]{0,0,0}$\bar a_3$}%
}}}}
\put(-134,1244){\makebox(0,0)[lb]{\smash{{\SetFigFont{7}{8.4}{\rmdefault}{\mddefault}{\updefault}{\color[rgb]{0,0,0}$a_1$}%
}}}}
\put( 91,974){\makebox(0,0)[lb]{\smash{{\SetFigFont{7}{8.4}{\rmdefault}{\mddefault}{\updefault}{\color[rgb]{0,0,0}$a_4$}%
}}}}
\put(-314,299){\makebox(0,0)[lb]{\smash{{\SetFigFont{7}{8.4}{\rmdefault}{\mddefault}{\updefault}{\color[rgb]{0,0,0}$\bar a_2$}%
}}}}
\put(766,119){\makebox(0,0)[lb]{\smash{{\SetFigFont{7}{8.4}{\rmdefault}{\mddefault}{\updefault}{\color[rgb]{0,0,0}$\bar a_1$}%
}}}}
\put(631,1019){\makebox(0,0)[lb]{\smash{{\SetFigFont{7}{8.4}{\rmdefault}{\mddefault}{\updefault}{\color[rgb]{0,0,0}$a_2$}%
}}}}
\put(946,659){\makebox(0,0)[lb]{\smash{{\SetFigFont{7}{8.4}{\rmdefault}{\mddefault}{\updefault}{\color[rgb]{0,0,0}$\bar a_4$}%
}}}}
\end{picture}%
\caption{Ivory's theorem and its proof using conjugate quadrilaterals.}
\label{fig:Ivory}
\end{figure}

With the same argument one can prove the Ivory theorem on the sphere and in the hyperbolic plane, see Section \ref{sec:SphHyp}.


\subsection{Dixon's angle condition in the Burmester mechanism}
The linkage on Figure \ref{fig:Dixon} consists of two small quadrilaterals sharing an edge and of one big quadrilateral having an angle in common with each of the small ones. It is easy to see that a linkage of this form is rigid in general. The following is a necessary condition of its flexibility.

\begin{figure}[ht]
\centering
\begin{picture}(0,0)%
\includegraphics{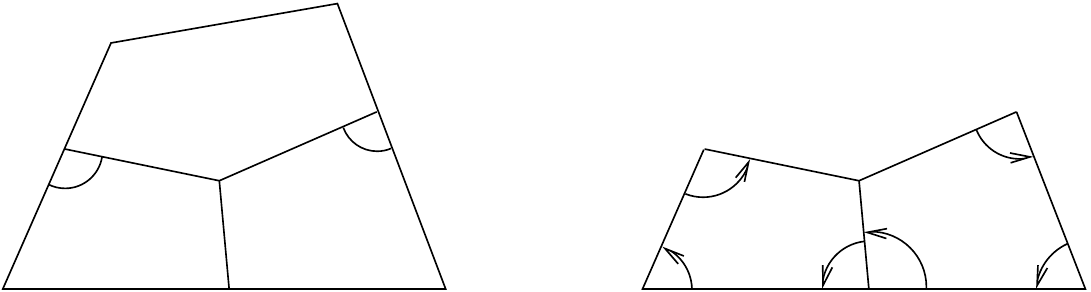}%
\end{picture}%
\setlength{\unitlength}{4144sp}%
\begingroup\makeatletter\ifx\SetFigFont\undefined%
\gdef\SetFigFont#1#2#3#4#5{%
  \reset@font\fontsize{#1}{#2pt}%
  \fontfamily{#3}\fontseries{#4}\fontshape{#5}%
  \selectfont}%
\fi\endgroup%
\begin{picture}(4974,1329)(-11,-523)
\put(3378,-138){\makebox(0,0)[lb]{\smash{{\SetFigFont{9}{10.8}{\rmdefault}{\mddefault}{\updefault}{\color[rgb]{0,0,0}$\theta_1$}%
}}}}
\put(3139,-408){\makebox(0,0)[lb]{\smash{{\SetFigFont{9}{10.8}{\rmdefault}{\mddefault}{\updefault}{\color[rgb]{0,0,0}$\psi_1$}%
}}}}
\put(4204,-357){\makebox(0,0)[lb]{\smash{{\SetFigFont{9}{10.8}{\rmdefault}{\mddefault}{\updefault}{\color[rgb]{0,0,0}$\phi_2$}%
}}}}
\put(3665,-334){\makebox(0,0)[lb]{\smash{{\SetFigFont{9}{10.8}{\rmdefault}{\mddefault}{\updefault}{\color[rgb]{0,0,0}$\phi_1$}%
}}}}
\put(4647,-346){\makebox(0,0)[lb]{\smash{{\SetFigFont{9}{10.8}{\rmdefault}{\mddefault}{\updefault}{\color[rgb]{0,0,0}$\psi_2$}%
}}}}
\put(4331, -2){\makebox(0,0)[lb]{\smash{{\SetFigFont{9}{10.8}{\rmdefault}{\mddefault}{\updefault}{\color[rgb]{0,0,0}$\theta_2$}%
}}}}
\end{picture}%
\caption{An overconstrained, that is generically rigid, linkage.}
\label{fig:Dixon}
\end{figure}

\begin{prp}
If the linkage on Figure \ref{fig:Dixon} is flexible and both small quadrilaterals are of elliptic type, then the angles marked at the left remain either equal modulo $\pi$ or complement each other modulo $\pi$ during the flex:
\[
\theta_1 \pm \theta_2 = n\pi
\]
\end{prp}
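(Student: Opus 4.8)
The plan is to read the conclusion through the coordinates $z_i=\tan\frac{\theta_i}{2}$: the relation $\theta_1\pm\theta_2=n\pi$ is equivalent to the statement that, along the whole flex, one has \emph{either} $z_{\theta_1}^2=z_{\theta_2}^2$ \emph{or} $z_{\theta_1}^2 z_{\theta_2}^2=1$. Since the linkage is flexible, its complexified configuration space has a component of positive dimension; fix an irreducible one--dimensional component $C$ and a holomorphic parameter $t$ on its normalization. Every angle of each of the three quadrilaterals --- the two small ones $Q^{(1)},Q^{(2)}$ and the big one $Q^{(0)}$ --- is then a meromorphic function of $t$, and restriction to $C$ gives morphisms $C\to\QorC(a^{(i)})$. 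If the image of $C$ in one of the $\QorC(a^{(i)})$ is a point, that quadrilateral stays congruent to itself during the flex and the assertion is either vacuous or trivial, so I may assume all three morphisms are non-constant.

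Next I would invoke the parametrizations of Section \ref{sec:Param}. Since $Q^{(1)}$ and $Q^{(2)}$ are of elliptic type, Proposition \ref{prp:SnEuc} applies and the tangents of their half--angles are scaled shifts $z=p\,f(\cdot+\mathrm{const})$ of a single Jacobi elliptic function $f\in\{\sn,\cn\}$; the big quadrilateral $Q^{(0)}$ is treated by whichever parametrization of Section \ref{sec:Param} applies to it (the genuinely degenerate types are excluded by the presence of the two attached sub--quadrilaterals), and in every case it too has all of its angle--tangents as scaled shifts of one function. Thus on each of the three curves the functions $\cot\frac{\alpha}{2}$ attached to the angles $\alpha$ are order--two meromorphic functions, related pairwise by biquadratic equations of the form \eqref{eqn:AdjX} and \eqref{eqn:OppX}.

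Now comes the coupling. Each angle that $Q^{(0)}$ has in common with $Q^{(1)}$ forces the corresponding order--two function to take equal values on $\QorC(a^{(0)})$ and on $\QorC(a^{(1)})$; together with the analogous condition for $Q^{(2)}$ and with the constraint imposed by the edge shared by $Q^{(1)}$ and $Q^{(2)}$, this realizes $C$ inside the fibre products of the three curves over $\CP^1$. The key structural point --- and, I expect, the main obstacle --- is that the existence of a one--dimensional flex compatible with all these constraints forces the three parametrizations to be \emph{synchronized}: the parameter of each of $Q^{(0)},Q^{(1)},Q^{(2)}$ is, along $C$, an affine function of one parameter $t$ (a translation, up to the sign involution $z_i\mapsto -z_i$ that distinguishes $\QorC$ from $Q^{\C}$), for one and the same elliptic function $f$. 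This is a rigidity/dimension argument about fibre products of elliptic double covers, and it is exactly the place where the hypothesis that the linkage is \emph{not} generically rigid --- hence that its edge lengths satisfy a special closure relation --- is used. Granting this, $\cot\frac{\theta_1}{2}=p_1 f(t+\tau_1)$ and $\cot\frac{\theta_2}{2}=p_2 f(t+\tau_2)$ with a common $f$ and a common $t$.

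It remains to pin down $\tau_1-\tau_2$. Going around the linkage, this shift is a sum of the shifts recorded in Proposition \ref{prp:SnEuc} for each quadrilateral (which lie in $\{0,\pm K,\pm\tau,\pm(K+\tau)\}$) plus the shifts picked up at the shared angles; using the left--right symmetry of the linkage in Figure \ref{fig:Dixon} and the fact that each small quadrilateral is attached to $Q^{(0)}$ in the same way, these combine to a shift lying in the period lattice of $f^2$. For such a shift, $f(t+\tau_1)^2$ and $f(t+\tau_2)^2$ are proportional, so matching the amplitudes gives $z_{\theta_1}^2=z_{\theta_2}^2$; the residual $z\mapsto z^{-1}$ ambiguity inherent in the parametrization of a single angle replaces this, in the other case, by $z_{\theta_1}^2 z_{\theta_2}^2=1$. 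Either way $\theta_1\pm\theta_2\in\pi\Z$, which is the claim; the bookkeeping of which function ($\sn$, $\cn$, or a trigonometric degeneration) occurs for each quadrilateral and of the constants $\tau_i$ is routine once the synchronization is in place.
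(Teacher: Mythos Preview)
Your argument has a genuine gap at the ``synchronization'' step, and this is precisely where the content of the proposition lies. You assert that flexibility forces the parameters of $Q^{(0)},Q^{(1)},Q^{(2)}$ to be affine functions of a single $t$ for \emph{one and the same} elliptic function $f$, but the two small quadrilaterals have a priori different side lengths and hence different Jacobi moduli $k$; there is no common $f$ until one has shown that the degree-$2$ projections $f_i\colon Q_i\to\CP^1$ to the shared angle variable have identical branch loci in $\CP^1$. You acknowledge this is ``a rigidity/dimension argument about fibre products of elliptic double covers'' but do not supply it. The subsequent shift bookkeeping is also unsubstantiated: even granting a common $f$, nothing you have written forces $\tau_1-\tau_2$ into the period lattice of $f^2$, and the ``residual $z\mapsto z^{-1}$ ambiguity'' is not part of the parametrization of a single quadrilateral.

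The paper's route is both different and much shorter, and it bypasses the explicit parametrizations entirely. It observes that if the branch sets of $f_1$ and $f_2$ differ, the fibre product $Q_{12}=Q_1\times_{\CP^1}Q_2$ has genus at least $2$; but flexibility of the full linkage means $Q_{12}$ must carry the configuration space of the big quadrilateral, whose genus is at most $1$. Hence the branch sets coincide. Since a branch point of $f_i$ occurs exactly when the opposite angle $\theta_i$ lies in $\{0,\pi\}$, this gives $\theta_1\in\{0,\pi\}\Leftrightarrow\theta_2\in\{0,\pi\}$. The paper then uses only Lemma~\ref{lem:CosLinDep}: $\cos\theta_1$ and $\cos\theta_2$ are each affine in the cosine of the shared angle, hence affinely related to each other, and the coincidence of the extreme values $\pm1$ forces $\cos\theta_1=\pm\cos\theta_2$. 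So the missing idea in your sketch is the genus argument on the fibre product; once you have that, the elementary Lemma~\ref{lem:CosLinDep} finishes the job without any recourse to Proposition~\ref{prp:SnEuc}.
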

\begin{proof}
Forget the big quadrilateral and require only that the horizontal sides of the small quadrilaterals remain aligned. The configuration space $Q_{12}$ of this coupling is a fiber product of the configuration spaces $Q_1$ and $Q_2$ of the small quadrilaterals, see the diagram \eqref{eqn:Z12CD}.

\begin{equation}
\label{eqn:Z12CD}
\xymatrix {
&& Q_{12} \ar[dl] \ar[dr] && \\
& Q_1 \ar[dl] \ar[dr]_{f_1} && Q_2 \ar[dr] \ar[dl]^{f_2} \\
w_1 \in \CP^1 && z \in\CP^1 && \CP^1 \ni w_2
}
\end{equation}

Here $z = z_1 = \tan\frac{\phi_1}2 = \cot\frac{\phi_2}2 = z_2^{-1}$ and $w_i = \tan\frac{\psi_i}2$.

If the branch sets of the maps $f_1$ and $f_2$ on the diagram don't coincide, then $Q_{12}$ is a hyperelliptic curve.
On the other hand, if the coupling remains flexible when restrained by the big quadrilateral, then $Q_{12}$ is the configuration space of the big quadrilateral, thus cannot be hyperelliptic.

Branch points of $f_i$ correspond to $\theta_i \in \{0,\pi\}$. Thus for flexibility it is necessary that
\[
\theta_1 \in \{0,\pi\} \Leftrightarrow \theta_2 \in \{0, \pi\}
\]
Applying Lemma \ref{lem:CosLinDep} to both small quadrilaterals we obtain that $\cos \theta_1$ and $\cos \theta_2$ must be linearly dependent. It follows that either $\cos\theta_1 = \cos \theta_2$, that is $\theta_1 - \theta_2 = n\pi$, or $\cos\theta_1 = -\cos\theta_2$, that is $\theta_1 + \theta_2 = n\pi$.
\end{proof}

It turns out that if the linkage on Figure \ref{fig:Dixon} is flexible, then another bar can be added without restraining its motion, see \cite{Wun68}. This results in a linkage on Figure \ref{fig:BurHart}, left, called the Burmester focal mechanism. Of course, points on the sides of the big quadrilateral may also lie on the extensions of these sides. In particular, if one of them lies at infinity, then the trajectory of the central joint is a line orthogonal to the corresponding side. The result is a straight-line mechanism of Hart \cite{Har77,Kem78,Dar79a}, see Figure \ref{fig:BurHart}, right.

\begin{figure}[ht]
\centering
\includegraphics{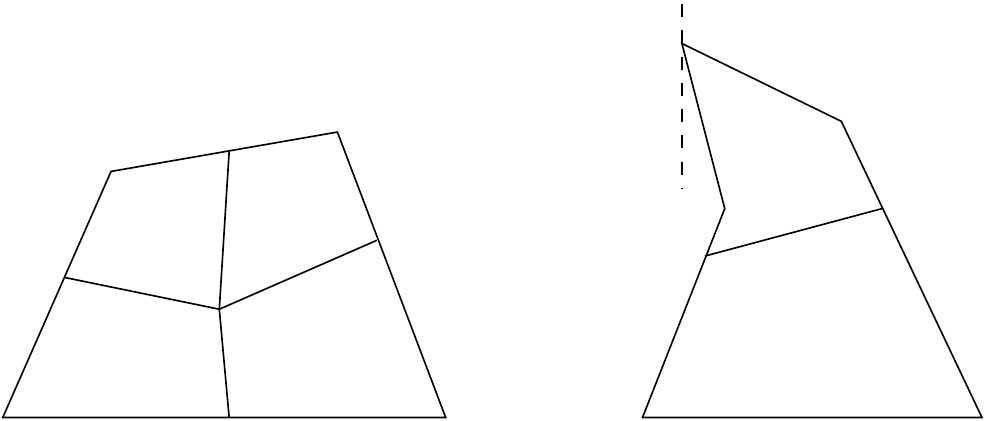}
\caption{Burmester's linkage and its special case, Hart's straight line mechanism.}
\label{fig:BurHart}
\end{figure}

The problem of characterizing all flexible configurations of the form shown on Figure \ref{fig:BurHart}, left, was studied by Krause \cite{Kra08} using Darboux's parametrization of the configuration space. He found three families: in the first case two opposite small quadrilaterals are parallelograms while the other two are similar; in the second all small quadrilaterals are deltoids and the big one is orthodiagonal; in the third family all quadrilaterals are conic or elliptic and the lengths of 12 segments are subject to 7 polynomial relations.

More examples of overconstrained flexible linkages can be found in \cite{Sta13}.

\subsection{Flexible polyhedra}
\label{sec:FlexPol}
Consider a polyhedral cone made of four faces meeting along four edges. If the edges are viewed as ideal hinges, and the faces are allowed to cross each other, then the configuration space of the cone is that of a spherical quadrilateral obtained by intersecting the cone with a sphere centered at the apex. This allows to study the flexibility of polyhedral surfaces with vertices of degree $4$ by means of elliptic functions.

The only closed polyhedron with all vertices of degree $4$ is the octahedron. Bricard \cite{Bri97} studied the system of biquadratic equations between the tangents of dihedral half-angles and described three classes of flexible (self-intersecting) octahedra. In fact, it suffices to look at the three dihedral angles adjacent to the same face, so Bricard's method was to find when the resultant of two biquadratic polynomials $P_1(z_2, z_3)$ and $P_2(z_1, z_3)$ is divisible by a biquadratic polynomial $P_3(z_1,z_2)$.

Recently, Gaifullin \cite{Gai13} classified flexible cross-polytopes (analogs of the octahedron in higher dimensions) in the spaces of constant curvature. Unexpectedly enough (polyhedra of higher dimensions tend to be ``more rigid''), flexible cross-polytopes exist in all dimensions, besides, in $\Sph^3$ there is a type that is not present in Bricard's classification for $\R^3$. Gaifullin used parametrization of dihedral angles by Jacobi elliptic functions, similar to the one in Section \ref{sec:EllReal}.

\begin{figure}[ht]
\centering
\includegraphics[width=.8\textwidth]{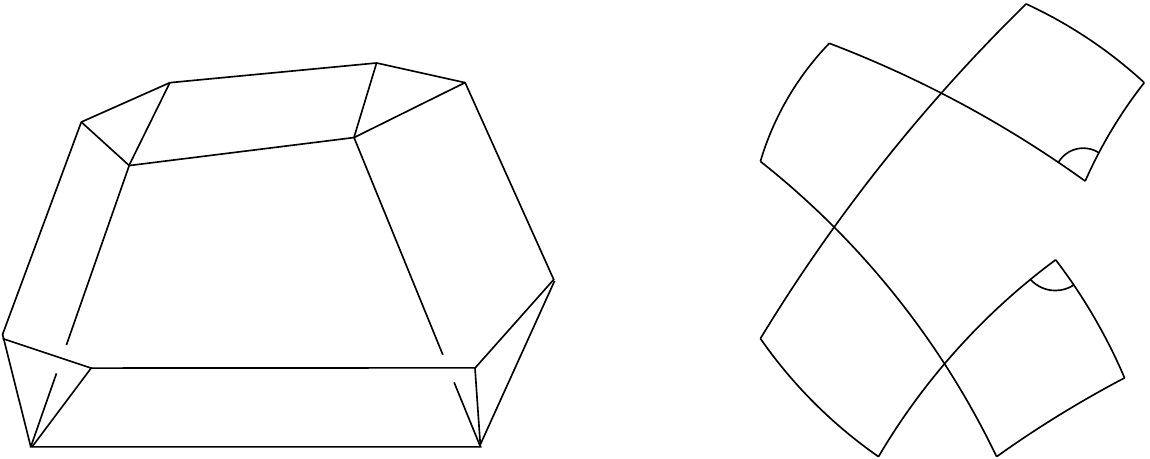}
\caption{A Kokotsakis polyhedron and the associated spherical linkage.}
\label{fig:Koko4}
\end{figure}

There are plenty of non-closed $3$-dimensional polyhedra with all vertices of degree $4$. The neighborhood of a face of such a polyhedron consists of an $n$-gon surrounded by a ``belt'' of triangles and quadrilaterals, and is called a \emph{Kokotsakis polyhedron}, \cite{Kok33}. See Figure \ref{fig:Koko4}, left, for a polyhedron with a quadrangular base. All polygons are required to remain flat during a deformation. Thus the polyhedron is flexible if and only if the spherical framework with scissors-like joints on Figure \ref{fig:Koko4}, right, deforms in such a way that the two marked angles remain equal.

The configuration space of a Kokotsakis polyhedron is described by a system of $n$ biquadratic equations in $n$ variables. Kokotsakis \cite{Kok33}, Graf and Sauer \cite{SG31}, and Stachel \cite{Sta10} described several types of flexible polyhedra. In \cite{Izm_Koko} a complete classification in the case of a quadrangular base was obtained. Parametrization by elliptic functions is used in an essential way; among the new flexible types there is an irreducible one, where the resultant of $P(z_1,z_2)$ and $Q(z_2,z_3)$ is an irreducible polynomial and thus proportional to the resultant of $R(z_3,z_4)$ and $S(z_4,z_1)$.

\section{Configuration spaces of spherical and hyperbolic quadrilaterals}
\label{sec:SphHyp}
Qualitatively, all results of this article apply for quadrilaterals on the sphere or on the hyperbolic plane. Configuration spaces of spherical quadrilaterals play a role in the study of flexible polyhedra, see Section \ref{sec:FlexPol}.

For spherical quadrilaterals, the admissible side lengths are subject to inequalities
\begin{subequations}
\begin{equation}
\label{eqn:QuadIneqASph}
0 < a_i < \pi \quad \forall i
\end{equation}
\begin{equation}
\label{eqn:QuadIneqBSph}
a_i < a_j + a_k + a_l < a_i + 2\pi \quad \forall i,
\end{equation}
\end{subequations}
while in the hyperbolic case the conditions are the same \eqref{eqn:QuadIneqA}, \eqref{eqn:QuadIneqB} as in the euclidean. Similarly, in place of equation \eqref{eqn:Grashof} in the spherical case we have
\[
a_1 \pm a_2 \pm a_3 \pm a_4 \equiv 0 (\mod 2\pi)
\]
This adds ``antideltoids'' and ``antiisograms'' to the respective cases in Definition \ref{dfn:Types}.

In terms of the tangents of half-angles the configuration space of oriented quadrilaterals is described by equations \eqref{eqn:OppX} and \eqref{eqn:AdjX}, where in the spherical case we have 
\begin{equation*}
\begin{aligned}
b_{22} &= \sin\frac{a_1 + a_2 - a_3 - a_4}2 \sin\frac{a_1 - a_2 + a_3 - a_4}2\\
b_{20} &= \sin\frac{a_1 + a_2 + a_3 - a_4}2 \sin\frac{a_1 - a_2 - a_3 - a_4}2\\
b_{02} &= \sin\frac{a_1 - a_2 + a_3 + a_4}2 \sin\frac{a_1 + a_2 - a_3 + a_4}2\\
b_{00} &= \sin\frac{a_1 - a_2 - a_3 + a_4}2 \sin\frac{a_1 + a_2 + a_3 + a_4}2
\end{aligned}
\end{equation*}
and similarly for $c_{ij}$ with $c_{11} = -\sin a_2 \sin a_4$. In the hyperbolic case $\sinh$ takes the place of $\sin$.

Further, orthodiagonal spherical quadrilaterals are characterized by
\[
\cos a_1 \cos a_3 = \cos a_2 \cos a_4
\]
and these are the only $2$-periodic ones.

There are analogs of identities from Lemmas \ref{lem:SIdent} and \ref{lem:SIdent2}. Some of them are collected in the next lemma.

\begin{lem}
\label{lem:SIdentSph}
For any $a, b, c, d$ the following identities hold.
\begin{gather*}
\cos a \cos b + \cos c \cos d = \cos \bar a \cos \bar b + \cos \bar c \cos \bar d\\
\sin a \sin b + \sin c \sin d = \sin \bar a \sin \bar b + \sin \bar c \sin \bar d\\
\cos a \cos b - \cos c \cos d = \sin \bar a \sin \bar b - \sin \bar c \sin \bar d\\
\sin a \sin b \sin c \sin d + \cos a \cos b \cos c \cos d = \sin \bar a \sin \bar b \sin \bar c \sin \bar d + \cos \bar a \cos \bar b \cos \bar c \cos \bar d
\end{gather*}
Here $\bar a = \frac12(-a + b + c + d)$ etc.
The same identities hold when $\sin$ and $\cos$ are replaced by $\sinh$ and $\cosh$.
\end{lem}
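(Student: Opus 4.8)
The plan is to expand every product by the product-to-sum formulas and then recognize both sides as one and the same combination of cosines, using only the elementary relations
\[
\bar a + \bar b = c+d, \quad \bar a - \bar b = b-a, \quad \bar c + \bar d = a+b, \quad \bar c - \bar d = d-c,
\]
which are immediate from $\bar a = \tfrac12(-a+b+c+d)$ and its cyclic analogues, together with the fact that $\cos$ (and $\cosh$) is an even function.

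For the first identity one writes $\cos a\cos b + \cos c\cos d = \tfrac12\bigl(\cos(a-b)+\cos(a+b)+\cos(c-d)+\cos(c+d)\bigr)$, while expanding the right-hand side and substituting the values of $\bar a\pm\bar b$ and $\bar c\pm\bar d$ gives $\tfrac12\bigl(\cos(b-a)+\cos(c+d)+\cos(d-c)+\cos(a+b)\bigr)$; the two agree by evenness of $\cos$. The second and third identities are handled in exactly the same way, now using $\sin a\sin b = \tfrac12\bigl(\cos(a-b)-\cos(a+b)\bigr)$: in each case both sides collapse to the same signed sum of $\cos(a\pm b)$ and $\cos(c\pm d)$.

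For the fourth identity I would first combine the two quartic terms. Since
\[
\cos a\cos b\cos c\cos d = \tfrac14\bigl(\cos(a-b)+\cos(a+b)\bigr)\bigl(\cos(c-d)+\cos(c+d)\bigr)
\]
and $\sin a\sin b\sin c\sin d = \tfrac14\bigl(\cos(a-b)-\cos(a+b)\bigr)\bigl(\cos(c-d)-\cos(c+d)\bigr)$, their sum equals $\tfrac12\bigl(\cos(a-b)\cos(c-d)+\cos(a+b)\cos(c+d)\bigr)$, and one further application of the product-to-sum rule turns this into
\[
\tfrac14\bigl(\cos(a+b+c+d) + \cos(a+b-c-d) + \cos(a-b+c-d) + \cos(a-b-c+d)\bigr).
\]
The right-hand side of the identity produces the same four cosines: $\bar a+\bar b+\bar c+\bar d = a+b+c+d$, whereas $\bar a+\bar b-\bar c-\bar d$, $\bar a-\bar b+\bar c-\bar d$, $\bar a-\bar b-\bar c+\bar d$ are, by the four relations above, the negatives of $a+b-c-d$, $a-b+c-d$, $a-b-c+d$ respectively, and $\cos$ is even.

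The hyperbolic statements follow by the same computation with $\cosh X\cosh Y = \tfrac12\bigl(\cosh(X+Y)+\cosh(X-Y)\bigr)$ and $\sinh X\sinh Y = \tfrac12\bigl(\cosh(X+Y)-\cosh(X-Y)\bigr)$, using that $\cosh$ is even; the only thing to watch is the sign bookkeeping on the right-hand sides. I do not expect a genuine obstacle here: the argument is a bookkeeping exercise whose sole idea is the observation that $\bar a\pm\bar b$ is again one of the elementary linear combinations of $a,b,c,d$, which is exactly what makes the two sides of each identity comparable term by term.
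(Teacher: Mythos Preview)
Your proof is correct and complete. The paper states this lemma without proof (as it does for the analogous euclidean identities in Lemmas~\ref{lem:SIdent} and~\ref{lem:SIdent2}), so there is nothing to compare against; your product-to-sum reduction, together with the observation that each $\bar a\pm\bar b$ and $\bar c\pm\bar d$ equals $\pm$ one of $a\pm b$, $c\pm d$, is exactly the natural verification and works verbatim in the hyperbolic case since every product reduces to a combination of $\cosh$'s, which is even.
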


Parametrization of the configuration space in terms of Jacobi elliptic functions (or trigonometric functions, for conic quadrilaterals) are derived in the spherical case in \cite{Izm_Koko}.

Equation in terms of diagonal lengths, describing the space of quadrilaterals up to possibly orientation changing congruences undergoes a more significant change.
\begin{lem}
\label{lem:DiagCoordSph}
Let $x$ and $y$ be the lengths of diagonals in a spherical quadrilateral with side lengths $a_1, a_2, a_3, a_4$ (in this cyclic order). Denote
\[
u = \cos x, \quad v = \cos y
\]
Then the configuration space of quadrilaterals up to congruence is an algebraic curve given by the equation
\begin{equation*}
\label{eqn:DiagCoordSph}
u^2 v^2 - u^2 - v^2 + 2d_{11} uv + d_{10}x + d_{01}y + d_{00} = 0, \quad \text{where}
\end{equation*}
\begin{equation*}
\begin{aligned}
d_{11} &= - (\cos a_1 \cos a_3 + \cos a_2 \cos a_4)\\
d_{10} &= 2(\cos a_1 \cos a_2 + \cos a_3 \cos a_4)\\
d_{01} &= 2(\cos a_1 \cos a_4 + \cos a_2 \cos a_3)\\
d_{00} &= 1 - \cos^2 a_1 - \cos^2 a_2 - \cos^2 a_3 - \cos^2 a_4 + (\cos a_1 \cos a_3 - \cos a_2 \cos a_4)^2
\end{aligned}
\end{equation*}

The same holds for the configuration space of hyperbolic quadrilaterals, with $\cos$ and $\sin$ replaced by $\cosh$ and $\sinh$.
\end{lem}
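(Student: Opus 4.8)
The plan is to imitate the proof of the euclidean version, equation \eqref{eqn:DiagCoord}, replacing the Cayley--Menger (zero-volume) determinant by the Gram determinant of the position vectors of the four vertices. Realize the spherical quadrilateral $ABCD$ as four unit vectors $\vec A,\vec B,\vec C,\vec D\in\R^3$, so that $\langle\vec A,\vec B\rangle=\cos a_1$, $\langle\vec B,\vec C\rangle=\cos a_2$, $\langle\vec C,\vec D\rangle=\cos a_3$, $\langle\vec D,\vec A\rangle=\cos a_4$, while $\langle\vec A,\vec C\rangle=\cos x=u$ and $\langle\vec B,\vec D\rangle=\cos y=v$ encode the diagonals. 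Four vectors in $\R^3$ are linearly dependent, so the Gram matrix
\[
G=\begin{pmatrix}
1 & \cos a_1 & u & \cos a_4\\
\cos a_1 & 1 & \cos a_2 & v\\
u & \cos a_2 & 1 & \cos a_3\\
\cos a_4 & v & \cos a_3 & 1
\end{pmatrix}
\]
is singular: $\det G=0$. Conversely, a real pair $(u,v)$ with $\det G=0$ and $G$ positive semidefinite of rank $\le 3$ is the Gram matrix of four unit vectors in $\R^3$, hence of a spherical quadrilateral; since a quadrilateral is determined up to congruence by its two diagonals (Section~\ref{sec:DiagLengths}), this identifies the configuration space with the curve $\det G=0$.

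It remains to expand $\det G$. Since $u$ occurs only in the symmetric pair of entries $(1,3),(3,1)$ and $v$ only in $(2,4),(4,2)$, the expansion is biquadratic in $(u,v)$, and the coefficients can be read off from the combinatorics of permutations of $\{1,2,3,4\}$. The monomial $u^2v^2$ comes only from the even permutation $(13)(24)$ and has coefficient $+1$; $u^2$ comes only from $(13)$ and $v^2$ only from $(24)$, each with coefficient $-1$; the $uv$-coefficient is a sum over four $4$-cycles and equals $-2(\cos a_1\cos a_3+\cos a_2\cos a_4)=2d_{11}$; collecting the $3$-cycles that keep exactly one of the two $u$-entries and none of the $v$-entries gives the coefficient of $u$, namely $2(\cos a_1\cos a_2+\cos a_3\cos a_4)=d_{10}$, and symmetrically $d_{01}$; finally the constant term is $\det G|_{u=v=0}$, a $4\times4$ determinant with two zeros inserted, which expands to $1-\sum_i\cos^2 a_i+(\cos a_1\cos a_3-\cos a_2\cos a_4)^2=d_{00}$. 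Matching these values finishes the spherical case.

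For the hyperbolic case, realize $ABCD$ in the hyperboloid model in $\R^{2,1}$; for points on $\{\langle\cdot,\cdot\rangle=-1\}$ one has $\langle\vec P,\vec Q\rangle=-\cosh d(P,Q)$, so the Lorentzian Gram matrix of $\vec A,\vec B,\vec C,\vec D$ equals $-1$ times the matrix $G$ above with every $\cos$ replaced by $\cosh$ and $u=\cosh x$, $v=\cosh y$. Four vectors in the $3$-dimensional space $\R^{2,1}$ are linearly dependent, so this matrix is singular, hence so is the $\cosh$-version of $G$; since $\det G$ has already been expanded as a polynomial identity in the matrix entries, substituting hyperbolic cosines yields exactly the same equation with $\cos$ replaced by $\cosh$ throughout the $d_{ij}$.

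The only genuine labor is the symbolic expansion of the $4\times4$ determinant; everything else is bookkeeping. I expect the main (and only) obstacle to be verifying that the constant coefficient collapses to the compact form $1-\sum_i\cos^2 a_i+(\cos a_1\cos a_3-\cos a_2\cos a_4)^2$ --- this is where the cross terms must conspire --- but, as the expansion above indicates, it does so cleanly.
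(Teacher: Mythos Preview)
Your proof is correct and follows exactly the same approach as the paper: the paper's proof simply says ``Consider the Gram matrix of the vectors from the center of the sphere (or center of the hyperboloid, in the hyperbolic case) to the vertices of the quadrilateral. Its determinant must be equal to zero,'' and you have faithfully carried out the expansion that the paper leaves to the reader.
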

\begin{proof}
Consider the Gram matrix of the vectors from the center of the sphere (or center of the hyperboloid, in the hyperbolic case) to the vertices of the quadrilateral. Its determinant must be equal to zero.
\end{proof}

In the hyperbolic case, a similar equation is derived by considering the Minkowski space model of the hyperbolic plane.

\begin{thm}[Ivory's theorem on the sphere and in the hyperbolic plane]
\label{thm:IvorySph}
Let $A_1$ and $A_2$ be two points on the sphere, respectively in the hyperbolic plane.
Define an ellipse with the foci $A_1$ and $A_2$ as the set of points with a constant sum of geodesic distances to $A_1$ and $A_2$, and a hyperbola as the set of points with a constant absolute value of the difference of these distances.

Then in every curvilinear quadrilateral cut out by a confocal family of ellipses and hyperbolas the diagonals have equal geodesic lengths.
\end{thm}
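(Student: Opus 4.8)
The plan is to mirror the Euclidean argument of Section~\ref{sec:Ivory}, replacing Lemma~\ref{lem:DiagCoord} by its spherical/hyperbolic counterpart Lemma~\ref{lem:DiagCoordSph} and the identities of Lemma~\ref{lem:SIdent2} by those of Lemma~\ref{lem:SIdentSph}. The new ingredient to be supplied is the spherical/hyperbolic version of the isomorphism $Q^{\C}(a)\cong Q^{\C}(\bar a)$ of Theorem~\ref{thm:NonOr}. As in Section~\ref{sec:NonOr}, this amounts to checking that the coefficients $d_{ij}$ in the equation of Lemma~\ref{lem:DiagCoordSph}, written in the coordinates $u=\cos x$, $v=\cos y$, do not change under $a\mapsto\bar a$: the invariance of $d_{10}$ and $d_{01}$ is the statement that $\cos a\cos b+\cos c\cos d$ is invariant (the first identity of Lemma~\ref{lem:SIdentSph}), while the invariance of $d_{11}$ and of $d_{00}$ follows from the remaining three identities of that lemma together with $\sin^2+\cos^2=1$. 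Consequently the affine curves in $(u,v)$ attached to $a$ and to $\bar a$ literally coincide, and a quadrilateral with side lengths $a$ and a quadrilateral with side lengths $\bar a$ that share one diagonal and lie over the same branch of this curve necessarily share the other diagonal as well.

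Next I would set up the two auxiliary quadrilaterals. Fix the foci $A_1,A_2$ and a confocal family consisting of two ellipses, $\dist(\,\cdot\,,A_1)+\dist(\,\cdot\,,A_2)=2\alpha_i$, and two hyperbola branches, $\dist(\,\cdot\,,A_1)-\dist(\,\cdot\,,A_2)=2\beta_j$, with $i,j\in\{1,2\}$; they cut out a curvilinear quadrilateral with vertices $P_{ij}$, and at $P_{ij}$ one has $\dist(P_{ij},A_1)=\alpha_i+\beta_j$ and $\dist(P_{ij},A_2)=\alpha_i-\beta_j$. Form the geodesic quadrilateral $A_1P_{11}A_2P_{22}$ with the foci as a pair of opposite vertices; its side lengths are $a=(\alpha_1+\beta_1,\ \alpha_1-\beta_1,\ \alpha_2-\beta_2,\ \alpha_2+\beta_2)$, and its diagonals are the focal segment $A_1A_2$ (the one separating the first two sides from the last two) and $P_{11}P_{22}$. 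The quadrilateral $A_1P_{12}A_2P_{21}$ built from the other pair of opposite vertices of the curvilinear quadrilateral has side lengths $(\alpha_1+\beta_2,\ \alpha_1-\beta_2,\ \alpha_2-\beta_1,\ \alpha_2+\beta_1)$; since $s=\alpha_1+\alpha_2$ here, this quadruple is exactly $\bar a$ after the cyclic relabelling of sides by two, which is a dihedral symmetry preserving the partition of the sides induced by each diagonal. Its diagonals are the same focal segment $A_1A_2$ and $P_{12}P_{21}$.

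Finally I would conclude as follows. Both auxiliary quadrilaterals give points of the common curve of Lemma~\ref{lem:DiagCoordSph} with the same first coordinate $u_0=\cos\bigl(\dist(A_1,A_2)\bigr)$; the second one is the image of the first under the isomorphism $Q^{\C}(a)\cong Q^{\C}(\bar a)$, which preserves both diagonal lengths, so the two points coincide, whence $\cos\bigl(\dist(P_{11},P_{22})\bigr)=\cos\bigl(\dist(P_{12},P_{21})\bigr)$ and the two diagonals of the curvilinear quadrilateral have equal geodesic length. The one point requiring care is that $A_1P_{12}A_2P_{21}$ corresponds under the isomorphism to $A_1P_{11}A_2P_{22}$ rather than to the other quadrilateral over $u_0$ (with the second diagonal replaced by the conjugate root of the quadratic-in-$v$); I would settle this by a continuity argument, deforming the confocal family to a configuration in which equality of diagonals is manifest, e.g.\ the limit in which the foci merge, the ellipses become concentric circles and the hyperbolas radial geodesics, so that both $P_{11}P_{22}$ and $P_{12}P_{21}$ are chords of circles of radii $\alpha_1,\alpha_2$ subtending the same angle. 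The hyperbolic case is identical with $\cos,\sin$ replaced by $\cosh,\sinh$, using the Minkowski model to obtain Lemma~\ref{lem:DiagCoordSph} and the corresponding identities of Lemma~\ref{lem:SIdentSph}. I expect the main obstacle to be the bookkeeping rather than anything deep: tracking the signs $\pm\beta_j$ consistently around the curvilinear quadrilateral so that the second auxiliary quadrilateral comes out conjugate to the first, and making the branch-selection step precise; the coefficient-invariance computation, though routine, is the other place demanding care since the spherical $d_{ij}$ mix $\sin$ and $\cos$ and use all four identities of Lemma~\ref{lem:SIdentSph}.
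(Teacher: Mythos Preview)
Your approach matches the paper's: show the coefficients $d_{ij}$ of Lemma~\ref{lem:DiagCoordSph} are invariant under $a\mapsto\bar a$ via Lemma~\ref{lem:SIdentSph}, then repeat the Euclidean Ivory argument of Section~\ref{sec:Ivory} with the two auxiliary focal quadrilaterals. Two small remarks: $d_{11}=-(\cos a_1\cos a_3+\cos a_2\cos a_4)$ is in fact also immediate from the \emph{first} identity of Lemma~\ref{lem:SIdentSph} (just pair $(a_1,a_3)$ with $(a_2,a_4)$), and for $d_{00}$ the paper supplies the explicit rewriting
\[
d_{00}=-1+(\sin a_1\sin a_3+\sin a_2\sin a_4)^2-2\sin a_1\sin a_2\sin a_3\sin a_4-2\cos a_1\cos a_2\cos a_3\cos a_4,
\]
whose invariance then follows from identities~2 and~4 of Lemma~\ref{lem:SIdentSph}; the branch-selection issue you flag (and resolve by continuity) is not addressed in the paper's proof.
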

Note that a spherical hyperbola consists of two spherical ellipses (replace one of the foci by its antipode).
\begin{proof}
Coefficients of the equation \eqref{eqn:DiagCoordSph} don't change when $a_i$ are replaced by $\bar a_i$. For the coefficients $d_{11}$, $d_{10}$, $d_{01}$ this is immediate from Lemma \ref{lem:SIdentSph}, while $d_{00}$ must first be shown to be equal to
\begin{multline*}
-1 + (\sin a_1 \sin a_3 + \sin a_2 \sin a_4)^2\\
- 2 \sin a_1 \sin a_2 \sin a_3 \sin a_4 - 2 \cos a_1 \cos a_2 \cos a_3 \cos a_4
\end{multline*}
(with hyperbolic trigonometric functions in the hyperbolic case).
Now the Ivory's theorem follows by the same argument as in the euclidean case, see Section \ref{sec:Ivory}.
\end{proof}

It is not clear to us whether Theorem \ref{thm:IvorySph} is related to a version of Ivory's theorem for hyperbolic spaces presented in \cite{SW04}.

Finally, note that while in the euclidean case the scaling of the sides
\[
(a_1, a_2, a_3, a_4) \mapsto (\lambda a_1, \lambda a_2, \lambda a_3, \lambda a_4)
\]
doesn't change the configuration space, in the spherical and hyperbolic cases it does.

\section{Open questions}
\begin{prb}
The area $S$ is a meromorphic function on the space of quadrilaterals with fixed side lengths. The inscribed quadrilateral $I(a)$ is a branch point of $A$, and we have $S(I(a)) = \sqrt{\bar a_1 \bar a_2 \bar a_3 \bar a_4}$ (Brahmagupta's formula). What are the other branch points and other critical values of $A$? Is there a nice theory in the spherical and hyperbolic case?
\end{prb}

\begin{prb}
Derive periodicity conditions for foldings of spherical and hyperbolic quadrilaterals.
\end{prb}

\begin{prb}
Study flexible bipyramids (or suspensions) with the help of parametrizations of the configuration spaces of spherical quadrilaterals.
\end{prb}
A bipyramid over a quadrangle is an octahedron, thus this is a generalization of Bricard's problem of flexible octahedra. Flexible bipyramids were studied in \cite{Con74,AC11}. It is not known whether there exist non-trivial flexible bipyramids over the pentagon; in the articles cited examples of flexible bipyramids over the hexagon were found. By cutting off the two apices of a bipyramid one obtains a polyhedron with vertices of degree four. This leads to a system of biquadratic equations, see Section \ref{sec:FlexPol}. Besides this system, there is also a holonomy condition (since the polyhedron is not simply connected).

\begin{prb}
The dihedral group action on $a = (a_1, a_2, a_3, a_4)$ does not change the configuration space $\QorC(a)$. What happens when the cyclic order of sides is changed in one of the following two ways?
\[
a' = (a_1, a_3, a_2, a_4), \quad a'' = (a_1, a_2, a_4, a_3)
\]
\end{prb}

\begin{figure}[ht]
\centering
\begin{picture}(0,0)%
\includegraphics{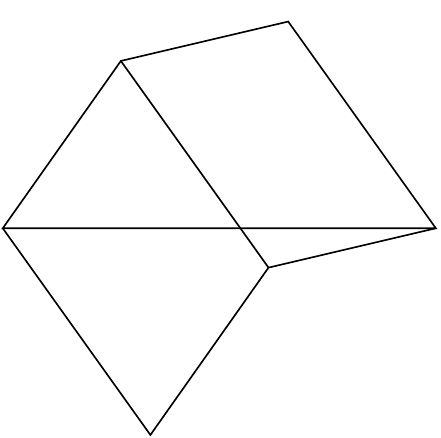}%
\end{picture}%
\setlength{\unitlength}{4144sp}%
\begingroup\makeatletter\ifx\SetFigFont\undefined%
\gdef\SetFigFont#1#2#3#4#5{%
  \reset@font\fontsize{#1}{#2pt}%
  \fontfamily{#3}\fontseries{#4}\fontshape{#5}%
  \selectfont}%
\fi\endgroup%
\begin{picture}(2004,1992)(-11,-1468)
\put(451,-466){\makebox(0,0)[lb]{\smash{{\SetFigFont{9}{10.8}{\rmdefault}{\mddefault}{\updefault}{\color[rgb]{0,0,0}$a_1$}%
}}}}
\put(181,-1096){\makebox(0,0)[lb]{\smash{{\SetFigFont{9}{10.8}{\rmdefault}{\mddefault}{\updefault}{\color[rgb]{0,0,0}$a_2$}%
}}}}
\put( 91,-61){\makebox(0,0)[lb]{\smash{{\SetFigFont{9}{10.8}{\rmdefault}{\mddefault}{\updefault}{\color[rgb]{0,0,0}$a_4$}%
}}}}
\put(766,389){\makebox(0,0)[lb]{\smash{{\SetFigFont{9}{10.8}{\rmdefault}{\mddefault}{\updefault}{\color[rgb]{0,0,0}$a_3$}%
}}}}
\put(1666,-16){\makebox(0,0)[lb]{\smash{{\SetFigFont{9}{10.8}{\rmdefault}{\mddefault}{\updefault}{\color[rgb]{0,0,0}$a_2$}%
}}}}
\put(901,-196){\makebox(0,0)[lb]{\smash{{\SetFigFont{9}{10.8}{\rmdefault}{\mddefault}{\updefault}{\color[rgb]{0,0,0}$a_2$}%
}}}}
\put(766,-1096){\makebox(0,0)[lb]{\smash{{\SetFigFont{9}{10.8}{\rmdefault}{\mddefault}{\updefault}{\color[rgb]{0,0,0}$a_4$}%
}}}}
\put(1441,-736){\makebox(0,0)[lb]{\smash{{\SetFigFont{9}{10.8}{\rmdefault}{\mddefault}{\updefault}{\color[rgb]{0,0,0}$a_3$}%
}}}}
\end{picture}%
\caption{Linking the configuration spaces $\QorC(a)$, $\QorC(a')$, and $\QorC(a'')$.}
\label{fig:Permute}
\end{figure}

The curves $\QorC(a)$, $\QorC(a')$, and $\QorC(a'')$ are isomorphic, as follows from Theorem \ref{thm:Param} or as is indicated on Figure \ref{fig:Permute}. What does change, are the parametrizations. As formulas in Proposition \ref{prp:SnEuc} suggest, the shifts start mixing up with the amplitudes. The configuration space of the linkage on Figure \ref{fig:Permute} seems to be a four-fold covering of each of the spaces  $\QorC(a)$, $\QorC(a')$, and $\QorC(a'')$.


\begin{thebibliography}{10}
\def\cprime{$'$}
\bibitem{Akop12}
A.~Akopyan.
\newblock Personal communication, Oberwolfach, June 2012.

\bibitem{AC11}
V.~Alexandrov and R.~Connelly.
\newblock Flexible suspensions with a hexagonal equator.
\newblock {\em Illinois J. Math.}, 55:127--155, 2011.

\bibitem{AE06}
J.~V. {Armitage} and W.~F. {Eberlein}.
\newblock {\em {Elliptic functions.}}
\newblock Cambridge: Cambridge University Press, 2006.

\bibitem{Bax89}
R.~J. Baxter.
\newblock {\em Exactly solved models in statistical mechanics}, pages xii+486.
\newblock Academic Press Inc., London, 1989.

\bibitem{BH04}
Y.~{Benoist} and D.~{Hulin}.
\newblock {It\'eration de pliages de quadrilat\`eres.}
\newblock {\em {Invent. Math.}}, 157(1):147--194, 2004.

\bibitem{Bri97}
R.~{Bricard}.
\newblock {M\'emoire sur la th\'eorie de l'octa\`edre articul\'e.}
\newblock {\em {Journ. de Math. (5)}}, 3:113--148, 1897.

\bibitem{BV96}
V.~{Buchstaber} and A.~{Veselov}.
\newblock {Integrable correspondences and algebraic representations of
  multivalued groups.}
\newblock {\em {Int. Math. Res. Not.}}, 1996(8):381--407, 1996.

\bibitem{Con74}
R.~Connelly.
\newblock An attack on rigidity. {I}, {II}.
\newblock {\em Bull. Amer. Math. Soc.}, 81:566--569, 1975.
\newblock (An extended version is available at:
  \url{http://www.math.cornell.edu/\textasciitilde connelly}).

\bibitem{Dar79}
G.~Darboux.
\newblock De l'emploi des fonctions elliptiques dans la th\'eorie du
  quadrilat\`ere plan.
\newblock {\em Bull. Sci. Math.}, 3(1):109--128, 1879.

\bibitem{Dar79a}
G.~Darboux.
\newblock {Sur un nouvel appareil \`a ligne droite de M. Hart.}
\newblock {\em {Darboux Bull. (2)}}, 3:144--151, 1879.

\bibitem{DR14}
V.~Dragovi{\'c} and M.~Radnovi{\'c}.
\newblock Bicentennial of the great {P}oncelet theorem (1813--2013): current
  advances.
\newblock {\em Bull. Amer. Math. Soc. (N.S.)}, 51(3):373--445, 2014.

\bibitem{Dui10}
J.~J. Duistermaat.
\newblock {\em Discrete integrable systems}.
\newblock Springer Monographs in Mathematics. Springer, New York, 2010.
\newblock QRT maps and elliptic surfaces.

\bibitem{Edw07}
H.~M. Edwards.
\newblock A normal form for elliptic curves.
\newblock {\em Bull. Amer. Math. Soc. (N.S.)}, 44(3):393--422 (electronic),
  2007.

\bibitem{Eul68}
L.~Euler.
\newblock {\em Institutionum calculi integralis}, volume~I.
\newblock 1768.
\newblock available at
  \url{http://www.archive.org/details/institutionescal020326mbp}.

\bibitem{Gai13}
A.~A. Gaifullin.
\newblock Flexible cross-polytopes in spaces of constant curvature.
\newblock \url{http://arxiv.org/abs/1312.7608}.

\bibitem{GH78}
P.~{Griffiths} and J.~{Harris}.
\newblock {On Cayley's explicit solution to Poncelet's porism.}
\newblock {\em {Enseign. Math. (2)}}, 24:31--40, 1978.

\bibitem{Har77}
H.~{Hart}.
\newblock {On some cases of parallel motion.}
\newblock {\em {Proc. Lond. Math. Soc.}}, 8:286--289, 1877.

\bibitem{Izm_Koko}
I.~Izmestiev.
\newblock Classification of flexible {K}okotsakis polyhedra with quadrangular
  base.
\newblock \url{http://arxiv.org/abs/1411.0289}.

\bibitem{Kem78}
A.~B. {Kempe}.
\newblock {On conjugate four-piece linkages.}
\newblock {\em {Proc. Lond. Math. Soc.}}, 9:133--147, 1878.

\bibitem{Kok33}
A.~Kokotsakis.
\newblock \"{U}ber bewegliche {P}olyeder.
\newblock {\em Math. Ann.}, 107(1):627--647, 1933.

\bibitem{Kra08}
M.~{Krause}.
\newblock {Zur Theorie der Gelenksysteme II.}
\newblock {Leipz. Ber. 60, 132-144 (1908).}, 1908.

\bibitem{Kri81}
I.~M. Krichever.
\newblock The {B}axter equations and algebraic geometry.
\newblock {\em Funktsional. Anal. i Prilozhen.}, 15(2):22--35, 96, 1981.

\bibitem{Mar66}
A.~I. Markushevich.
\newblock {\em The remarkable sine functions}.
\newblock American Elsevier Publishing Co., Inc., New York, 1966.
\newblock Translated from \emph{Zamechatel'nye sinusy}, Moscow, 1965.

\bibitem{McKM99}
H.~{McKean} and V.~{Moll}.
\newblock {\em {Elliptic curves. Function theory, geometry, arithmetic.}}
\newblock Cambridge: Cambridge University Press, 1999.

\bibitem{SG31}
R.~Sauer and H.~Graf.
\newblock \"{U}ber {F}l\"achenverbiegung in {A}nalogie zur {V}erknickung
  offener {F}acettenflache.
\newblock {\em Math. Ann.}, 105(1):499--535, 1931.

\bibitem{Sta10}
H.~Stachel.
\newblock A kinematic approach to {K}okotsakis meshes.
\newblock {\em Comput. Aided Geom. Design}, 27(6):428--437, 2010.

\bibitem{Sta13}
H.~Stachel.
\newblock On the flexibility and symmetry of overconstrained mechanisms.
\newblock {\em Philosophical Transactions of the Royal Society of London A:
  Mathematical, Physical and Engineering Sciences}, 372(2008), 2013.

\bibitem{SW04}
H.~{Stachel} and J.~{Wallner}.
\newblock {Ivory's theorem in hyperbolic spaces.}
\newblock {\em {Sib. Mat. Zh.}}, 45(4):946--959, 2004.

\bibitem{Wun68}
W.~Wunderlich.
\newblock On {B}urmester's focal mechanism and {H}art's straight-line motion.
\newblock {\em Journal of Mechanisms}, 3:79--86, 1968.

\bibitem{Zvo97}
D.~Zvonkine.
\newblock Courbes elliptiques dans la g\'eom\'etrie \'el\'ementaire.
\newblock {\em Quadrature}, (27):5--13, 1997.
\newblock
  \url{http://webusers.imj-prg.fr/~dimitri.zvonkine/publications/Zvonkine97d.p%
s}.
\end{thebibliography}
\end{document}